\documentclass[a4paper]{amsart}

\RequirePackage{amsmath} 
\RequirePackage{amssymb}
\usepackage{amscd,latexsym,amsthm,amsfonts,amssymb,amsmath,amsxtra}
\usepackage[colorlinks=true,urlcolor=blue,citecolor=blue]{hyperref}
\usepackage{color}
\usepackage[all]{xy}
\usepackage{bm}
\usepackage{mathtools}
\usepackage{ mathrsfs }
\usepackage{xcolor}
\usepackage{comment}
\usepackage{marginnote}
\usepackage{enumitem}
\usepackage{thmtools, thm-restate}

\let\Re\undefined
\let\Im\undefined

\DeclareMathOperator{\Re}{Re}
\DeclareMathOperator{\Im}{Im}

\DeclareMathOperator{\Tr}{Tr}

\newcommand{\floor}[1]{{\left\lfloor#1\right\rfloor}}

	\newcommand{\Res}{\operatorname{Res}}

	\newcommand{\sgn}{\operatorname{sgn}}

	\newcommand{\fin}{\operatorname{fin}}

	\newcommand{\diag}{\operatorname{diag}}

	\newcommand{\Vol}{\operatorname{Vol}}

	\newcommand{\Ind}{\operatorname{Ind}}
	
	\newcommand{\RNum}[1]{\uppercase\expandafter{\romannumeral #1\relax}}

\begin{document}
\theoremstyle{plain}
\newtheorem{thm}{Theorem}[section]
	
\newtheorem{cor}[thm]{Corollary}

\newtheorem{thmx}{Theorem}
\renewcommand{\thethmx}{\Alph{thmx}} 

\newtheorem{hy}[thm]{Hypothesis}
\newtheorem*{thma}{Theorem A}
\newtheorem*{corb}{Corollary B}
\newtheorem*{thmc}{Theorem C}
\newtheorem{lemma}[thm]{Lemma}  
\newtheorem{prop}[thm]{Proposition}
\newtheorem{conj}[thm]{Conjecture}
\newtheorem{fact}[thm]{Fact}
\newtheorem{claim}[thm]{Claim}
	
\theoremstyle{definition}
\newtheorem{defn}[thm]{Definition}
\newtheorem{example}[thm]{Example}
\newtheorem{que}[thm]{Question}
\newtheorem{ass}[thm]{Assumption}

\theoremstyle{remark}
	
\newtheorem{remark}[thm]{Remark}	
\numberwithin{equation}{section}

\title[]{Fourier Spectral Reciprocity and Canonical Hecke $L$-Functions}
 
\author{Liyang Yang}

\address{Department of Mathematics, Texas A\&M University, College Station,  TX 77843, USA} 
\email{liyangy@tamu.edu} 

\begin{abstract}
We prove a Fourier-type toric spectral reciprocity formula over number fields,
relating Tate integrals over a quadratic extension to a dual family of Hecke
periods over the base field. The formula keeps the spectral parameter free and
allows test functions adapted to arithmetic applications.

We apply this reciprocity formula to canonical Hecke $L$-functions over CM
fields and obtain explicit first moment formulas, including both
central values and central derivatives. These formulas are uniform in the weight and the twisting conductor, and require
no Heegner-type splitting hypothesis.

Together with subconvexity bounds, they yield quantitative nonvanishing results
for canonical Hecke $L$-functions and their derivatives. In weight one, this gives a rank-one analogue of the arithmetic applications of
Masri--Yang, yielding quantitative Mordell--Weil rank-one results for quadratic
twists of the associated CM abelian varieties.
\end{abstract}

\date{\today}
\maketitle
\tableofcontents

\section{Introduction}

Canonical Hecke characters, originating in the work of Rohrlich
\cite{Roh80a, Roh80b} and brought into arithmetic prominence by Gross, form a
distinguished class of algebraic Hecke characters.  We shall refer
to the $L$-functions attached to them as \textit{canonical Hecke $L$-functions}.
In the imaginary quadratic case, Gross \cite{Gro80} associated a canonical
weight-one Hecke character to an elliptic $\mathbb{Q}$-curve over the Hilbert
class field and conjectured an exact formula for its Mordell--Weil rank in terms
of the sign of the functional equation.  This conjecture was resolved through
nonvanishing results for canonical Hecke $L$-functions
\cite{Roh80a, Roh80b, Roh82, MR82, MY00}, together with deep arithmetic theorems
\cite{Rub81, KL89}.

For higher weights and CM fields, the corresponding nonvanishing problems remain central to the arithmetic and geometry of canonical Hecke characters.  Under
suitable hypotheses, Masri and Yang \cite{MY11} established first moment
asymptotics for central values of canonical Hecke $L$-functions over CM fields,
with applications to ranks of CM abelian varieties.  Their method relies on
period formulas and the equidistribution of CM points.  While powerful, this
geometric approach is tied to the central point: central values and central
derivatives are accessed through different period or height formulas.  This
suggests the need for an analytic framework in which the central parameter
remains free.

In the imaginary quadratic case, such an analytic approach was carried out by
Templier \cite{Tem10}, who obtained first moment formulas for canonical Hecke
$L$-functions and thereby gave an alternative route to the results of
\cite{KMY11}.  Over general CM fields, however, the special one-dimensional
features used in this setting, such as explicit sums over integers, are no
longer available.

The purpose of this paper is to develop a different approach, uniform over
quadratic extensions of number fields. We prove a new spectral reciprocity
formula for Hecke $L$-functions over such extensions, with test functions
adapted to arithmetic applications. By suitable specialization, this gives
explicit first moment formulas for canonical Hecke $L$-functions over CM
fields, including both central values and central derivatives. These formulas
lead to quantitative nonvanishing results in hybrid ranges, recovering and
strengthening the central-value applications of Masri--Yang \cite{MY11} by
relaxing local hypotheses and allowing the parameters to vary. They also make central derivatives accessible and yield rank-one results for quadratic twists of the associated CM abelian varieties, including higher-dimensional analogues of Gross's elliptic $\mathbb{Q}$-curves
$A(\chi)$ \cite{Gro80}.

Although the arithmetic applications in this paper focus on canonical Hecke
characters, the reciprocity formula itself is more flexible. With suitable
choices of local test functions, the same framework applies to algebraic
anticyclotomic Hecke characters over CM fields, with the canonical family
serving as a distinguished arithmetic specialization.

\subsection{Fourier Spectral Reciprocity}
Let $E/F$ be a quadratic extension of number fields, and let $\eta$ be the associated quadratic character. Fix an embedding $\iota: E^{\times}\hookrightarrow  \mathrm{GL}_2(F)$, regarding $E^{\times}$ as a nonsplit torus.

Let $\mu$ and $\omega$ be characters of
$F^{\times}\backslash\mathbb{A}_F^{\times}$, and let $s\in\mathbb{C}$. Let $h(\cdot,s)$ be a section in the induced representation $\pi=\mu|\cdot|^s\boxplus \overline{\mu}\omega|\cdot|^{-s}$. Define
\begin{align*}
h^{\Diamond}(g,s):=\int_{N(\mathbb{A}_F)}h(wug,s)du,\quad w=\begin{pmatrix}
& 1\\
1
\end{pmatrix}.
\end{align*}

For a Hecke character $\chi$ of $E^{\times}\backslash\mathbb{A}_E^{\times}$ satisfying $\chi|_{\mathbb{A}_F^{\times}}=\overline{\omega}$, define
\begin{align*}
Z(\chi;h(\cdot,s)):=\int_{\mathbb{A}_F^{\times}\backslash\mathbb{A}_E^{\times}}h(\iota(x),s)\chi(x)d^{\times}x.
\end{align*}

Then $Z(\chi;h(\cdot,s))$ is a Tate integral representing the completed Hecke $L$-function $\Lambda(1/2+s,\widetilde{\mu}\chi)$, where $\widetilde{\mu}:=\mu\circ\mathrm{Nr}_{E/F}$. It converges absolutely for $\Re(s)$ sufficiently large and admits a meromorphic continuation to $\mathbb{C}$. We continue to denote this meromorphic continuation by $Z(\chi;h(\cdot,s))$.

\begin{restatable}[]{thmx}{SR}\label{thm3.1}
Let $|\Re(s)|<1/2$. Then
\begin{equation}\label{1.1}
\sum_{\chi\in \widehat{C}_E:\
\chi|_{\mathbb{A}_F^{\times}}=\overline{\omega}}\frac{Z(\chi;h(\cdot,s))}{L(1,\eta)}=J_{\mathrm{Const}}(h(\cdot,s))+J_{\mathrm{Res}}(h(\cdot,s))+J_{\mathrm{Dual}}(h(\cdot,s)),
\end{equation}
where
\begin{align*}
&J_{\mathrm{Const}}(h(\cdot,s)):=h(I_2,s)+h^{\Diamond}(I_2,s),\\
&J_{\mathrm{Res}}(h(\cdot,s)):=\big[h(w,s)+h^{\Diamond}(w,s)\big]\prod_{v\in \Sigma_{F,\infty}}\Gamma_{F_v}(1),\\
&J_{\mathrm{Dual}}^{\heartsuit}(h(\cdot,s)):=\frac{1}{2\pi\underset{\lambda=1}{\Res}\ \zeta_F(\lambda)}\sum_{\xi\in \widehat{C}_F}\int_{\mathbb{R}}\widetilde{\Psi}(W(\cdot,s),\xi|\cdot|^{it})dt.
\end{align*}
\end{restatable}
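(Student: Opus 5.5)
We will prove \eqref{1.1} by computing one automorphic period in two ways. The period is the toric period of an Eisenstein series twisted against the family of characters. Let $E(g,h(\cdot,s))=\sum_{\gamma\in B(F)\backslash \mathrm{GL}_2(F)}h(\gamma g,s)$ be the Eisenstein series attached to $h$. Form the kernel-type function
\[
\mathcal{K}(x)=\sum_{\chi|_{\mathbb{A}_F^\times}=\overline{\omega}}\chi(x)\quad\text{on } E^\times\mathbb{A}_F^\times\backslash\mathbb{A}_E^\times,
\]
which is, up to volume, the delta mass at the identity of the compact group $\mathbb{A}_F^\times E^\times\backslash\mathbb{A}_E^\times$, twisted by $\overline\omega$.

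\emph{Spectral side.} Unfold $E(\iota(x),h)$ over $E^\times\mathbb{A}_F^\times\backslash\mathbb{A}_E^\times$. Since $E^\times$ acts simply transitively on $B(F)\backslash\mathrm{GL}_2(F)\simeq\mathbb{P}^1(F)$ modulo $F^\times$, the nonsplit toric period of $E(\cdot,h)\chi$ equals $Z(\chi;h(\cdot,s))$. Hence
\[
\sum_\chi Z(\chi;h)
\]
is the pairing of $\mathcal{K}$ with $E(\iota(\cdot),h)$, that is, a multiple of $E(I_2,h(\cdot,s))$. The normalization $L(1,\eta)$ arises as the volume of $\mathbb{A}_F^\times E^\times\backslash\mathbb{A}_E^\times$, via the class number formula $\mathrm{Res}\,\zeta_E/\mathrm{Res}\,\zeta_F=L(1,\eta)$. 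We justify the interchange of sum and integral first for $\Re(s)$ large, where everything converges absolutely.

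\emph{Geometric side.} Write $E(I_2,h)$ through its Fourier expansion along $N$:
\[
E(g)=h(g)+h^\Diamond(g)+\sum_{\alpha\in F^\times}W(\mathrm{diag}(\alpha,1)g,s).
\]
At $g=I_2$ the constant term gives $J_{\mathrm{Const}}$. The non-constant sum is a function of $\alpha\in F^\times$. Apply Mellin inversion over $F^\times\backslash\mathbb{A}_F^\times$, i.e. decompose into Hecke characters $\xi|\cdot|^{it}$ via Fourier analysis on $C_F$. Moving the resulting line integral to $\Re=0$ picks up poles from the Hecke $L$-function $\widetilde\Psi(W,\xi|\cdot|^{\lambda})$, which occur only for $\xi$ trivial (and the analogous one for $\mu,\omega$). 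These residues, evaluated using $W$ at $w$ with local archimedean factors $\Gamma_{F_v}(1)$, should assemble into $h(w,s)+h^\Diamond(w,s)$ times $\prod\Gamma_{F_v}(1)$, which is $J_{\mathrm{Res}}$. The remaining integral over $\Re=0$ is $J_{\mathrm{Dual}}$, with the factor $1/(2\pi\,\mathrm{Res}\,\zeta_F)$ coming from the Plancherel measure on $C_F$.

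\emph{Main obstacle.} The hard step is the analytic continuation from $\Re(s)\gg0$ to the strip $|\Re(s)|<1/2$. This requires three things:
\begin{itemize}
\item controlling convergence of $\sum_\xi\int\widetilde\Psi\,dt$ uniformly, using Stirling bounds and the rapid decay of $W$;
\item ensuring that the contour shift crosses no additional poles in the strip;
\item identifying the residue terms precisely with $h(w)+h^\Diamond(w)$.
\end{itemize}
My first approach is to regularize by replacing $h$ with a section having compactly supported archimedean data, so the Mellin transform is entire apart from the $\zeta_F$-type poles. I would then verify the identity for such $h$ and extend by continuity in $s$ and by density in $h$.
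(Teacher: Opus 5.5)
Your overall architecture is the paper's: Fourier inversion on the compact group $\mathcal{C}_{E/F}$ gives $L(1,\eta)^{-1}\sum_{\chi}Z(\chi;h(\cdot,s))=E(I_2,s)$, and the constant term gives $J_{\mathrm{Const}}$. The nonconstant part is then Mellin-inverted on $F^{\times}\backslash\mathbb{A}_F^{\times}$ and shifted to $\Re\lambda=0$.

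\textbf{The residue identification is missing.} This is the one step that is not formal, and you only assert that the residues ``should assemble'' into $h(w,s)+h^{\Diamond}(w,s)$. Your description of them is also off. The poles are not at trivial $\xi$ in general. They sit at $\xi=\overline{\mu}$, $\lambda=1/2-s$ (from $\Lambda(1/2+\lambda+s,\mu\xi)$) and at $\xi=\mu\overline{\omega}$, $\lambda=1/2+s$ (from $\Lambda(1/2+\lambda-s,\overline{\mu}\omega\xi)$). The residues are not values of $W$.

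The paper supplies the argument in Lemma \ref{lemma3.3}. For a Godement section, $\Psi(W(\cdot,s),\xi|\cdot|^{\lambda})$ is, after a change of variables, the iterated Tate integral
\begin{align*}
\int_{\mathbb{A}_F^{\times}}\int_{\mathbb{A}_F^{\times}}\int_{\mathbb{A}_F}\Phi(y,c)\overline{\psi}(ct)\,dc\;\xi\overline{\mu}\omega(t)|t|^{1/2-s+\lambda}d^{\times}t\;\xi\mu(y)|y|^{1/2+s+\lambda}d^{\times}y .
\end{align*}
For $\xi=\overline{\mu}$ the $y$-integral is a Tate integral for the trivial character. Its residue at $\lambda=1/2-s$ is therefore the Tate residue constant times $\int_{\mathbb{A}_F}\Phi(b,c)\,db$. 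The remaining $t$-integral represents $\Lambda(1-2s,\overline{\mu}^2\omega)$. Its functional equation turns it into $\int\int\Phi(b,t)\,db\,\mu^2\overline{\omega}(t)|t|^{2s}d^{\times}t$. This equals $h^{\Diamond}(w,s)=\int_{N(\mathbb{A}_F)}h(wuw,s)\,du$, because $(0,t)wuw=(tb,t)$ for $u=\begin{pmatrix}1&b\\&1\end{pmatrix}$.

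The case $\xi=\mu\overline{\omega}$ gives $h(w,s)$ in the same way, and arbitrary $h$ follows by varying $\Phi$. The factor $\prod_v\Gamma_{F_v}(1)$ is the ratio of the Tate residue constant (recorded there as $\Res_{\lambda=1}\Lambda_F(\lambda)$) to the Plancherel constant $\Res_{\lambda=1}\zeta_F(\lambda)$.

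Your ``at $w$'' intuition can also be made rigorous another way. Since $E(\diag(y,1),s)=\omega(y)E(\diag(y^{-1},1)w,s)$, as $|y|\to 0$ the function $\mathcal{G}(\diag(y,1),s)$ equals
\[
\omega(y)[h+h^{\Diamond}](\diag(y^{-1},1)w,s)-[h+h^{\Diamond}](\diag(y,1),s)
\]
up to a rapidly decaying error. The Mellin transforms of these four quasi-characters of $y$ produce the four poles, with residues proportional to $h(w,s)$, $h^{\Diamond}(w,s)$, $-h(I_2,s)$, $-h^{\Diamond}(I_2,s)$. Either way, some such computation is needed before $J_{\mathrm{Res}}$ is proved.

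\textbf{The obstacle you single out is misplaced, and the route through $\Re(s)\gg0$ would fail as stated.} No continuation in $s$ is needed. For each fixed $s$, $\mathcal{G}(\diag(y,1),s)$ decays rapidly as $|y|\to\infty$ and grows at most polynomially as $|y|\to0$. So Mellin inversion on a far-right line $\Re\lambda=r$ (the paper takes $r>10+|\Re(s)|$) is valid directly, as in Proposition \ref{prop2.2}, and only the $\lambda$-contour is moved. The hypothesis $|\Re(s)|<1/2$ is exactly what places $\lambda=1/2\pm s$ to the right of $\Re\lambda=0$ and the other two poles $\lambda=-1/2\pm s$ to the left.

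For $\Re(s)>1/2$ the picture changes:
\begin{itemize}
\item the crossed poles are instead $\lambda=s\pm 1/2$, both with $\xi=\mu\overline{\omega}$;
\item the one at $\lambda=s-1/2$ contributes a multiple of $-h^{\Diamond}(I_2,s)$, and $h^{\Diamond}(w,s)$ does not appear at all;
\item the $\Re\lambda=0$ integral $J_{\mathrm{Dual}}$ jumps when $\Re(s)$ crosses $1/2$.
\end{itemize}
So \eqref{1.1} does not hold in general for $\Re(s)>1/2$. It cannot be verified there and continued into the strip without tracking these jumps. The regularization by compactly supported archimedean data plus density in $h$ is unnecessary, and it does not address the residue identification.
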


Here $\Gamma_{F_v}(\cdot)$ is the local Gamma function defined by \eqref{eq1.1},  $\widetilde{\Psi}(W(\cdot,s),\xi|\cdot|^{it})$ denotes the meromorphic Rankin--Selberg convolution representing the completed twisted $L$-function $\Lambda(1/2+it,\pi\otimes\xi)$, and $\widehat{C}_F$ is the Pontryagin dual of $F^{\times}\backslash\mathbb{A}_F^{(1)}$.

We call \eqref{1.1} a \textit{Fourier-type toric spectral reciprocity formula}: it gives a direct identity between Tate integrals over $E$, arising from a nonsplit toric Fourier expansion, and Hecke periods over $F$ of Whittaker functions arising from the Fourier expansion of the Eisenstein series attached to $h(\cdot,s)$. 

\subsection{The First Moment Formula}Let $E/F$ be a CM extension with CM type $\boldsymbol{\Phi}$, and suppose that
every prime of $F$ above $2$ is unramified in $E$.  Let
$k\in \mathbb{Z}_{\geq 0}$, and let
$X_{E/F}^{\mathrm{un}}(k,\boldsymbol{\Phi})$ denote the set of \textit{unitary}
canonical Hecke characters of $E$ of type $(2k+1)\boldsymbol{\Phi}$.

Let $\mu$ be a unitary quadratic Hecke character of
$F^{\times}\backslash\mathbb{A}_F^{\times}$ with arithmetic conductor
$\mathfrak{f}$ coprime to $2\mathfrak{D}_E$, where $\mathfrak{D}_E$ denotes the
different, and put $\widetilde{\mu}:=\mu\circ N_{E/F}$. 

Let $[F:\mathbb{Q}]=d$ and $\epsilon_{\mu,k}=\mu_{\infty}(-1)(-1)^{kd}$.
Define
\begin{equation}\label{a}
\mathcal{M}_{\mathfrak{f}}(s,k):=\frac{2^{sd}D_{E/F}^{1/2+s}D_F^{1/2+s}N_F(\mathfrak{f})^{s}L^{(\mathfrak{f})}(1+2s,\eta)}{\pi^{d}},
\end{equation}
where $D_{E/F}$ is the norm of the relative discriminant, $D_F$ is the absolute
discriminant of $F$, and $L^{(\mathfrak{f})}(1+2s,\eta)$ denotes the partial
$L$-function with the Euler factors at primes dividing $\mathfrak{f}$ removed.

By choosing a suitable test section $h(\cdot,s)$ in the spectral reciprocity
formula \eqref{1.1}, we obtain the following asymptotic formula.

\begin{thmx}\label{thma}
Let $0<\delta<1/2$ and $|\Re(s)|\leq \delta$. Then
\begin{align*}
\sum_{\chi\in X_{E/F}^{\mathrm{un}}(k,\boldsymbol{\Phi})}
L(1/2+s,\widetilde{\mu}\chi)
=
\frac{
\mathcal{M}_{\mathfrak{f}}(s,k)
+\epsilon_{\mu,k}\mathcal{M}_{\mathfrak{f}}(-s,k)}
{2^{sd}D_{E/F}^{s}D_F^{s}N_F(\mathfrak{f})^{s}}\cdot L(1,\eta)
+\mathcal{E}_{\mu}(s,k),
\end{align*}
where 
\begin{equation}\label{e1.3}
\mathcal{E}_{\mu}(s,k)
\ll
D_{E/F}^{\frac{7}{16}+2\delta+\varepsilon}
(1+k+|s|)^{\frac{15d}{16}+2\delta d+\varepsilon}
N_F(\mathfrak{f})^{\frac{11}{16}+\delta+\varepsilon}L(1,\eta).
\end{equation}
The implied constant depends only on $F$, $\delta$, and $\varepsilon$.
\end{thmx}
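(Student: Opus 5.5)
We specialize Theorem~\ref{thm3.1}. Take $\omega$ so that $\overline{\omega}$ is the common restriction to $\mathbb{A}_F^\times$ of all unitary canonical characters of type $(2k+1)\boldsymbol{\Phi}$. This restriction is fixed: it is $\eta$ times the appropriate archimedean sign character. We then build $h(\cdot,s)=\otimes_v h_v$ as a pure tensor in $\mu|\cdot|^s\boxplus\overline{\mu}\omega|\cdot|^{-s}$ with the following local components.

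\emph{Archimedean places.} $h_v$ is the weight-$(2k+1)$ vector, so that $\chi_v\mapsto Z_v(\chi_v;h_v)$ is supported on the characters of $\mathbb{C}^\times$ of type $z^{-(2k+1)}|z|^{2k+1}$ dictated by $\boldsymbol{\Phi}$ (up to sign conventions).

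\emph{Places ramified in $E$ or dividing $\mathfrak{f}$.} $h_v$ is a suitable characteristic-function section invariant under an Eichler-type compact open subgroup whose intersection with $\iota(E_v^\times)$ cuts out exactly the conductor of canonical characters (the minimal conductor allowed by the condition on $2$).

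\emph{Other places.} $h_v$ is spherical.

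With these choices the left side of \eqref{1.1} becomes $L(1,\eta)^{-1}$ times a constant multiple of $\sum_{\chi\in X^{\mathrm{un}}_{E/F}(k,\boldsymbol{\Phi})}\Lambda(1/2+s,\widetilde{\mu}\chi)$. This is because the Tate integral $Z(\chi;h)$ vanishes unless $\chi$ is canonical, and for canonical $\chi$ the archimedean factor $\Gamma_{\mathbb{C}}(1/2+s+k+1/2)^d$ and the conductor are independent of $\chi$. Dividing by these common factors turns completed $L$-functions into finite ones.

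Next we compute the geometric terms in \eqref{1.1}. $J_{\mathrm{Const}}$ gives $h(I_2,s)+h^{\Diamond}(I_2,s)$. The intertwining term $h^{\Diamond}$ is a product of local intertwining integrals, which we evaluate explicitly. The global factor is a ratio of $L(2s,\mu^2\overline{\omega})$-type quantities; since $\mu$ is quadratic and $\omega$ is essentially $\eta$, these reduce to $L^{(\mathfrak{f})}(1+2s,\eta)$ at $-s$ and $s$. The local archimedean and ramified factors then produce the powers $2^{sd}D_{E/F}^{s}D_F^{s}N_F(\mathfrak{f})^{s}$ and the sign $\epsilon_{\mu,k}=\mu_\infty(-1)(-1)^{kd}$. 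Normalizing by the common Gamma factors should give exactly $\mathcal{M}_{\mathfrak{f}}(s,k)+\epsilon_{\mu,k}\mathcal{M}_{\mathfrak{f}}(-s,k)$, times $L(1,\eta)$ after multiplying through. $J_{\mathrm{Res}}$ involves $h(w,s)$ and $h^{\Diamond}(w,s)$. By the support conditions at ramified or conductor places (the section vanishes at $w$ for our Eichler choice), or by a parity cancellation, we expect it to vanish or be absorbed into the error. This local bookkeeping is lengthy but routine.

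The main obstacle is bounding $J_{\mathrm{Dual}}$. Here $\pi$ is an Eisenstein representation, so $\widetilde{\Psi}(W,\xi|\cdot|^{it})$ equals a product $\Lambda(1/2+s+it,\mu\xi)\Lambda(1/2-s+it,\overline{\mu}\omega\xi)$ times local factors. For each $\xi$ and $t$ we would:
\begin{enumerate}
\item compute the local Whittaker integrals of the chosen $h_v$;
\item show they truncate $\xi$ to conductor dividing a bounded multiple of $\mathfrak{f}\mathfrak{D}_{E/F}$, with archimedean types and $|t|$ effectively bounded by $1+k+|s|$ (via Stirling decay of the archimedean Whittaker integrals);
\item bound the local sizes by $D_{E/F}^{-1/2}$-type savings.
\end{enumerate}
We then shift nothing and apply Weyl-hybrid subconvexity for $\mathrm{GL}_1$ over $F$, in the form $L(1/2+\sigma+it,\xi)\ll (C(\xi)(1+|t|))^{3/16+\varepsilon}$ type bounds (or Burgess $3/16$). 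This is applied to the two factors, with exponent loss $\delta$ from $|\Re s|\le\delta$ by convexity interpolation. Counting the $\xi$'s, roughly $(\text{conductor})\cdot(1+k+|s|)^d$ of them, and combining with the Whittaker sizes should yield \eqref{e1.3}. Matching the exact exponents $7/16$, $15/16$ and $11/16$ will require care in the archimedean Whittaker asymptotics, which is where I expect the real difficulty to lie.
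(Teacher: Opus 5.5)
Your overall plan (specialize Theorem~\ref{thm3.1}, evaluate the degenerate terms, bound the dual side) is the paper's. However, two steps fail as you state them.

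\textbf{Main terms.} You put both main terms in $J_{\mathrm{Const}}=h(I_2,s)+h^{\Diamond}(I_2,s)$ and expect $J_{\mathrm{Res}}$ to vanish or be negligible. For $h^{\Diamond}$ the opposite is true.

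With data that isolate the canonical family, the paper takes at each $v\mid\mathfrak{D}_{E/F}$ the $\chi_v^*$-equivariant function $\Phi_v(b,a)=q_v^{d_{E_v}/2}\eta_v(a)\mathbf{1}_{\mathcal{O}_v^\times}(a)\mathbf{1}_{\mathcal{O}_v}(b)$.
\begin{itemize}
\item Since $h^{\Diamond}(I_2,s)=\int\!\int\Phi(t,c)\,dc\,\eta(t)|t|^{2s}d^\times t$, its local factor at such $v$ contains $\int_{\mathcal{O}_v^\times}\eta_v(c)\,dc=0$. So $h^{\Diamond}(I_2,s)\equiv 0$, and $J_{\mathrm{Const}}$ yields only $\mathcal{M}_{\mathfrak f}(s,k)$.
\item The companion term $\epsilon_{\mu,k}\mathcal{M}_{\mathfrak f}(-s,k)$ is $h^{\Diamond}(w,s)=\int\!\int\Phi(c,t)\,dc\,\eta(t)|t|^{2s}d^\times t$. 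This is a term of $J_{\mathrm{Res}}$, and its ramified local factors involve $\eta_v(t)^2=1$, so they do not vanish.
\item Only $h(w,s)$ vanishes, because $\Phi_v(t,0)=0$. That the section vanishes at $w$ says nothing about $h^{\Diamond}(w,s)=\int_{N(\mathbb{A}_F)}h(wuw,s)\,du$.
\item This is not an artifact of the choice. For $\chi_v^*$-equivariant data at a tamely ramified $v$ one checks $h_v^{\Diamond}(w,s)=q_v^{-d_{F_v}/2}h_v(I_2,s)$: both only see the part of $\Phi_v$ on $a+b\sqrt{\tau_v}$ of even valuation. So no ramified local choice keeps $h(I_2,s)$ while killing $h^{\Diamond}(w,s)$.
\end{itemize}

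Nor can this term be absorbed into the error. At $s=0$ it equals $\epsilon_{\mu,k}$ times the first main term. Dropping it would give $2^{d-1}L^{(\mathfrak f)}(1,\eta)$ instead of $2^{d}L^{(\mathfrak f)}(1,\eta)$ in Corollary~\ref{cor1.4}. It would also give a nonzero average when $\epsilon_{\mu,k}=-1$, where every $L(1/2,\widetilde\mu\chi)$ vanishes.

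The sign $\epsilon_{\mu,k}$ is produced inside $h^{\Diamond}(w,s)$:
\begin{itemize}
\item a factor $(-1)^k$ from each archimedean factor;
\item a factor $\mu_v(-1)$ at each $v\mid\mathfrak f$, from $\sum_{\beta\in\mathbb{F}_v}\mu_v(1-\tau_v\beta^2)=-\mu_v(-\tau_v)$.
\end{itemize}
The second of these needs the $\mu_v$-twisted datum $\mu_v(a^2-b^2\tau_v)\mathbf{1}_{\mathcal{O}_v}(a)\mathbf{1}_{\mathcal{O}_v}(b)\mathbf{1}_{\mathcal{O}_v^\times}(a^2-b^2\tau_v)$ at $v\mid\mathfrak f$. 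A characteristic function of a lattice there selects either $\widetilde\mu_v\chi_v$ unramified (the untwisted family) or a strictly larger family. Your ``Eichler-type'' description also cannot work at inert $v\mid\mathfrak f$. No Eichler order of positive level contains $\iota(\mathcal{O}_{E_v})$, and this ramified principal series has no $\mathrm{GL}_2(\mathcal{O}_v)$-fixed vector.

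\textbf{Dual side.} Suppose you bound both $L(1/2+it,\xi)$ and $L(1/2+it,\eta\xi)$ pointwise by Burgess and count the $\asymp N_F(\mathfrak m)Q^d$ pairs $(\xi,t)$ trivially. Even with the optimal archimedean saving $Q^{-(1/4-2\delta)d}$, this gives $Q^{(9/8+2\delta)d}$ and $N_F(\mathfrak f)^{7/8+\delta}$, not the exponents $15/16$ and $11/16$. Only $D_{E/F}^{7/16+2\delta}$ comes out right.

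The decisive missing step is an averaging, not only archimedean care. Burgess is applied just to $L(1/2+it,\eta\xi)$, whose conductor carries $\mathfrak{D}_{E/F}$. The factor $L(1/2+it,\xi)$ is handled by Cauchy--Schwarz and the large-sieve mean-square bound $\sum_{\xi\in\widehat{C}_F(\mathfrak m)}\int|L(1/2+it,\xi)|^2\,dt\ll(N_F(\mathfrak m)R^d)^{1+\varepsilon}$ over a spectral box of size $R$.

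The local inputs must also be sharper than you state.
\begin{itemize}
\item The non-archimedean integrals must vanish unless $\xi_v$ is unramified at every $v\mid\mathfrak{D}_E$ and has conductor exponent at most $1$ at $v\mid\mathfrak f$, so the conductor of $\xi$ divides $\mathfrak f$. Merely knowing it divides $\mathfrak f\mathfrak D_{E/F}$ multiplies the number of $\xi$ by up to $D_{E/F}$.
\item Ramified $\xi_v$ at $v\mid\mathfrak f$ must save $q_v^{-1/2}$; this comes from Weil's bound for $\sum_{\alpha}\mu_v(\alpha^2-\tau_v)\overline{\xi}_v\eta_v(\alpha)$.
\item The archimedean integrals $\mathcal{J}_k^{\pm}(s,T)$ must be $\ll Q^{-1/4+\delta}$ uniformly for $|T|\le 2Q$, and negligible beyond. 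The transition range $|T|\approx Q$ needs van der Corput's third-derivative estimate.
\end{itemize}

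Finally, the local factors do not give a $D_{E/F}^{-1/2}$ saving. They give a net growth $D_{E/F}^{1/4+2\delta}$, and $7/16=1/4+3/16$ is that growth combined with Burgess for $L(1/2+it,\eta\xi)$.
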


\begin{remark}[Sources of Main Terms]
In \eqref{1.1}, two of the four degenerate terms vanish. The remaining terms
$h(I_2,s)$ and $h^{\Diamond}(w,s)$ give
$\mathcal{M}_{\mathfrak{f}}(s,k)$ and
$\epsilon_{\mu,k}\mathcal{M}_{\mathfrak{f}}(-s,k)$, respectively.
The second contribution is somewhat surprising: $h^{\Diamond}(w,s)$ comes from
the nonconstant Fourier terms of a suitable Eisenstein series, which in many
related settings, such as the class group character family in \cite{Yan20},
decay exponentially. Its survival here reflects a subtle feature of the
canonical-character family. The sign $\epsilon_{\mu,k}$ reflects the
functional-equation symmetry.
\end{remark}

\begin{remark}[Central Values and Derivatives]
Masri--Yang \cite{MY11} proved a first moment of
$L(1/2,\widetilde{\mu}\chi)$ over CM fields, for a slightly different family of
characters, using the period formula of \cite{RY99}. Since that method is tied
to central values, it does not apply directly to
$L(1/2+s,\widetilde{\mu}\chi)$ and therefore gives no information on
$L'(1/2,\widetilde{\mu}\chi)$. One advantage of Theorem \ref{thma} is that the
spectral parameter remains free, making central derivatives accessible. This
is crucial for our arithmetic application to the rank-one case of Gross's
conjecture over CM fields.
\end{remark}

\begin{remark}[Quantitative Comparison]
Under a Heegner-type splitting hypothesis on $\mathfrak{f}$, the main
asymptotic of Masri--Yang \cite{MY11} follows from equidistribution of a toric
suborbit of CM points on a Hilbert modular variety, but without a quantitative
error term; such a quantitative version can now be obtained from
\cite[Theorem B]{HY26}. Masri--Yang also obtain a quantitative variant, via
Venkatesh's ergodic method \cite{Ven10}, under an additional small split-prime
hypothesis. In contrast, Theorem \ref{thma} gives an unconditional quantitative
twisted formula directly from spectral reciprocity. Since the proof does not
use CM points, no Heegner-type splitting hypothesis is required.
\end{remark}

For $\chi\in X_{E/F}^{\mathrm{un}}(k,\boldsymbol{\Phi})$, the root number of
$L(1/2,\widetilde{\mu}\chi)$ is $\epsilon_{\mu,k}$.  Taking $s=0$ in Theorem
\ref{thma} gives the following central value and central derivative formulas.

\begin{cor}\label{cor1.4}
Suppose that $\epsilon_{\mu,k}=1$. Let $0<\varepsilon<10^{-3}$. Then
\begin{equation}\label{1.3}
\frac{1}{h_{E/F}}\sum_{\chi\in X_{E/F}^{\mathrm{un}}(k,\boldsymbol{\Phi})}
L(1/2,\widetilde{\mu}\chi)
=
2^{d}L^{(\mathfrak{f})}(1,\eta)
+\widetilde{\mathcal{E}}_{\mu}(0,k),
\end{equation}
where
\begin{align*}
\widetilde{\mathcal{E}}_{\mu}(0,k)
\ll_{F,\varepsilon}
D_{E/F}^{-1/16+\varepsilon}
(1+k)^{15d/16+\varepsilon}
N_F(\mathfrak{f})^{11/16+\varepsilon}.
\end{align*}
\end{cor}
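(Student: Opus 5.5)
We specialize Theorem \ref{thma} to $s=0$ and divide by $h_{E/F}$. With $\epsilon_{\mu,k}=1$, the main term at $s=0$ is
\begin{align*}
\frac{\mathcal{M}_{\mathfrak{f}}(0,k)+\mathcal{M}_{\mathfrak{f}}(0,k)}{1}\cdot L(1,\eta)
=\frac{2D_{E/F}^{1/2}D_F^{1/2}L^{(\mathfrak{f})}(1,\eta)L(1,\eta)}{\pi^{d}}.
\end{align*}
The first step is to identify $h_{E/F}$, the relative class number (the cardinality of $X^{\mathrm{un}}_{E/F}(k,\boldsymbol{\Phi})$, i.e.\ of the relevant quotient of class groups), with this factor via the analytic class number formula for the CM extension. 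We take the ratio of the class number formulas for $E$ and $F$. Write $\mathrm{Res}\,\zeta_E/\mathrm{Res}\,\zeta_F=L(1,\eta)$, where in each residue $2^{r_1}(2\pi)^{r_2}h R/(w\sqrt{D})$ we have $r_1(E)=0$, $r_2(E)=d$, $r_1(F)=d$, and $D_E=D_F^2D_{E/F}$. This gives
\[
L(1,\eta)=\frac{(2\pi)^d h_E R_E w_F\sqrt{D_F}}{2^d h_F R_F w_E D_F\sqrt{D_{E/F}}}=\frac{\pi^d h_{E/F}}{\sqrt{D_FD_{E/F}}}\cdot\frac{R_E w_F}{R_F w_E}.
\]
By the Hasse unit index $Q\in\{1,2\}$ we have $R_E/R_F=2^{d-1}/Q$. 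Hence $L(1,\eta)\asymp_F \pi^d h_{E/F}D_F^{-1/2}D_{E/F}^{-1/2}$, up to the factor $2^{d-1}w_F/(Qw_E)$.

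The key bookkeeping point is therefore to check that, with the paper's normalization of $h_{E/F}$ and of $|X^{\mathrm{un}}_{E/F}(k,\boldsymbol{\Phi})|$, the unit-index/roots-of-unity factor combines with the $2$ from $\mathcal{M}(0)+\mathcal{M}(0)$ to give exactly the constant $2^d$ in \eqref{1.3}. I expect this to be the only delicate step. Concretely, the count of canonical characters involves $w_E/w_F$ and $Q$ through the torsor structure over the class group $\mathrm{Cl}_E/\mathrm{Cl}_F$ (or its kernel of norm). I would verify that $h_{E/F}$ as defined in the paper absorbs precisely $Qw_E/(2w_F)$, so that
\[
\frac{1}{h_{E/F}}\cdot\frac{2\sqrt{D_{E/F}D_F}}{\pi^d}L(1,\eta)=2^d.
\]

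Finally, the error term follows by dividing \eqref{e1.3} at $\delta\to0$ (more precisely, any $\delta<\varepsilon/4$ with $s=0$) by $h_{E/F}$. Using the relation above, $L(1,\eta)/h_{E/F}\ll_F D_{E/F}^{-1/2}$. This gives
\[
\widetilde{\mathcal{E}}_\mu(0,k)\ll D_{E/F}^{7/16-1/2+\varepsilon}(1+k)^{15d/16+\varepsilon}N_F(\mathfrak{f})^{11/16+\varepsilon},
\]
which is the claimed bound. The restriction $\varepsilon<10^{-3}$ just guarantees that the choice of $\delta$ in terms of $\varepsilon$ is admissible with an implied constant depending only on $F,\varepsilon$.
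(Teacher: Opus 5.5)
Your route is the same as the paper's. You put $s=0$ in Theorem~\ref{thma}, note that for $\epsilon_{\mu,k}=1$ the two main terms coincide and give $2\pi^{-d}D_{E/F}^{1/2}D_F^{1/2}L^{(\mathfrak f)}(1,\eta)L(1,\eta)$, and divide by $h_{E/F}$. Your error term is also handled correctly. You take $\delta$ small in terms of $\varepsilon$ and use $h_{E/F}\gg_F D_{E/F}^{1/2}L(1,\eta)$, so the factor $L(1,\eta)$ in \eqref{e1.3} cancels and no Siegel-type input is needed.

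\textbf{The gap.} The step that produces the constant $2^d$ is left as something you ``would verify,'' and the mechanism you suggest for it does not exist.

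By \S\ref{sec3}, $X^{\mathrm{un}}_{E/F}(k,\boldsymbol{\Phi})$ is a torsor under $\widehat{\mathrm{Cl}}_{E/F}$. So its cardinality is $|\mathrm{Cl}_E/\Im(\mathrm{Cl}_F)|=|\kappa|\,h_E/h_F$, where $\kappa=\ker(\mathrm{Cl}_F\to\mathrm{Cl}_E)$ has order at most $2$. In particular $w_E$ does not enter. Nothing in the structure of the family absorbs the factor $Qw_E/2$ that your own computation exhibits.

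Your bookkeeping of the needed factor is also off. What you would need is $h_{E/F}=\frac{2}{Qw_E}\cdot\frac{h_E}{h_F}$, not something involving $Qw_E/(2w_F)$. If $h_{E/F}$ meant the size of the family, your computation would give the constant $2^{d+1}/(|\kappa|Qw_E)$. This equals $2^d$ only when $w_E=2$ and $Q=|\kappa|=1$.

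\textbf{How the paper closes it.} The paper settles this step directly by invoking the class number formula in the form
\[
h_{E/F}=2(2\pi)^{-d}D_{E/F}^{1/2}D_F^{1/2}L(1,\eta),
\]
which is exactly $h_E/h_F$ when $Qw_E=2$. With this normalization the main term is
\[
\frac{2\pi^{-d}D_{E/F}^{1/2}D_F^{1/2}L^{(\mathfrak f)}(1,\eta)L(1,\eta)}{2(2\pi)^{-d}D_{E/F}^{1/2}D_F^{1/2}L(1,\eta)}=2^{d}L^{(\mathfrak f)}(1,\eta)
\]
exactly.

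To complete your argument, state and use this normalization of $h_{E/F}$ explicitly. Equivalently, take $h_{E/F}=h_E/h_F$ under the hypothesis $Qw_E=2$. Do not try to extract the cancellation from the torsor structure of the family.
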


\begin{remark}
When $F=\mathbb{Q}$, \eqref{1.3} recovers \cite[Theorem 1]{Tem10} and
\cite[Theorem 1.1]{KMY11}, with explicit dependence on $k$ and
$N_F(\mathfrak{f})$. It also gives a new proof of these formulas in this case.
\end{remark}

\begin{cor}\label{cor1.6}
Suppose $\epsilon_{\mu,k}=-1$. Let $0<\varepsilon<10^{-3}$. Then
\begin{multline}\label{1.4}
\frac{1}{h_{E/F}}\sum_{\chi\in X_{E/F}^{\mathrm{un}}(k,\boldsymbol{\Phi})}L'(1/2,\widetilde{\mu}\chi)
=2^{d}
\Big[L^{(\mathfrak f)}(1,\eta)\log(2^dD_{E/F}D_FN_F(\mathfrak f))\\
+2(L^{(\mathfrak f)})'(1,\eta)\Big]+\widetilde{\mathcal{E}}_{\mu}'(0,k),
\end{multline}
where 
\begin{align*}
\widetilde{\mathcal{E}}_{\mu}'(0,k)\ll_{F,\varepsilon} D_{E/F}^{-1/16+\varepsilon}
(1+k)^{15d/16+\varepsilon}N_F(\mathfrak{f})^{11/16+\varepsilon}.
\end{align*}
\end{cor}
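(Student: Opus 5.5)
We will deduce Corollary \ref{cor1.6} from Theorem \ref{thma} by differentiating in $s$ at $s=0$. Throughout, $\epsilon_{\mu,k}=-1$, so every $L(1/2,\widetilde{\mu}\chi)$ with $\chi\in X_{E/F}^{\mathrm{un}}(k,\boldsymbol{\Phi})$ vanishes by the functional equation. Write the main term of Theorem \ref{thma} as $L(1,\eta)\,G(s)$ with
\[
G(s)=\frac{\mathcal{M}_{\mathfrak f}(s,k)-\mathcal{M}_{\mathfrak f}(-s,k)}{2^{sd}D_{E/F}^{s}D_F^{s}N_F(\mathfrak f)^{s}}.
\]
The numerator is odd in $s$, so $G(0)=0$. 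This is consistent with the vanishing of the left side at $s=0$, and it also forces $\mathcal{E}_\mu(0,k)=0$.

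Differentiating at $s=0$ gives $G'(0)=2\,\partial_s\mathcal{M}_{\mathfrak f}(s,k)|_{s=0}$, because the $s$-derivative of the denominator multiplies $G(0)=0$. From \eqref{a},
\[
\partial_s\mathcal{M}_{\mathfrak f}|_{s=0}=\frac{D_{E/F}^{1/2}D_F^{1/2}}{\pi^d}\Big[L^{(\mathfrak f)}(1,\eta)\log\big(2^dD_{E/F}D_FN_F(\mathfrak f)\big)+2(L^{(\mathfrak f)})'(1,\eta)\Big].
\]
To normalize by $h_{E/F}$, we use the class number formula for the relative class number of the CM extension, $L(1,\eta)=c_F\,h_{E/F}\,\pi^d D_{E/F}^{-1/2}D_F^{-1/2}$ up to a unit-index and root-of-unity factor $c_F$. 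The constant must combine with the factor $2$ to give $2^d$. This is exactly the normalization used in Corollary \ref{cor1.4}, which reads $2^dL^{(\mathfrak f)}(1,\eta)$ when $\epsilon_{\mu,k}=1$. So we take the same identity $L(1,\eta)\,2D_{E/F}^{1/2}D_F^{1/2}/\pi^d=2^dh_{E/F}$ and do not recompute it; any unit-index discrepancy is absorbed as in that corollary. Dividing by $h_{E/F}$ then produces the main term of \eqref{1.4}.

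For the error term, $\mathcal{E}_\mu(s,k)$ is holomorphic in $|\Re s|<1/2$, since both the left side and the main term are holomorphic there. We apply Cauchy's formula on the circle $|s|=\delta$:
\[
|\mathcal{E}_\mu'(0,k)|\le\delta^{-1}\max_{|s|=\delta}|\mathcal{E}_\mu(s,k)|.
\]
By \eqref{e1.3}, this maximum is
\[
\ll D_{E/F}^{7/16+2\delta+\varepsilon}(1+k)^{15d/16+2\delta d+\varepsilon}N_F(\mathfrak f)^{11/16+\delta+\varepsilon}L(1,\eta).
\]
Dividing by $h_{E/F}$ and using $L(1,\eta)/h_{E/F}\asymp_F D_{E/F}^{-1/2}$ turns the $D_{E/F}$ exponent into $-1/16+2\delta+\varepsilon$. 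Choosing $\delta=\varepsilon$ and renaming $\varepsilon$ gives the bound on $\widetilde{\mathcal{E}}'_\mu(0,k)$. The factor $\delta^{-1}$ is then a constant depending only on $\varepsilon$.

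The main obstacle is bookkeeping rather than analysis. One point to check is that the implied constant in \eqref{e1.3} is uniform in $\delta$ as $\delta\to0$, or at least depends only on $\delta$; since $\delta$ is tied to $\varepsilon$, dependence only on $\delta$ is enough. The other point is matching the class-number-formula constant to the $2^d$ normalization exactly as in Corollary \ref{cor1.4}.
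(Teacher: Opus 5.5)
Your proposal is correct and follows the paper's route. You differentiate Theorem \ref{thma} at $s=0$, and your value $G'(0)=2\,\partial_s\mathcal{M}_{\mathfrak f}(s,k)|_{s=0}$ agrees with the paper's elementary calculation at $\epsilon_{\mu,k}=-1$. You then normalize by the class number formula $h_{E/F}=2(2\pi)^{-d}D_{E/F}^{1/2}D_F^{1/2}L(1,\eta)$, which is exactly the identity you use.

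Your Cauchy estimate on $|s|=\delta$, with $\delta$ tied to $\varepsilon$, supplies the bound for the derivative of the error term that the paper leaves implicit. One wording fix: quote that identity outright, as the paper does, rather than saying it ``absorbs'' a unit-index discrepancy. A constant-factor error in the main term could not be absorbed into $\widetilde{\mathcal{E}}_{\mu}'(0,k)$.
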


The central derivative formula \eqref{1.4} appears to be new even when
$F=\mathbb{Q}$.

\subsection{Effective Quantitative Nonvanishing}

When $F=\mathbb{Q}$, the work of \cite{LX04} implies that the vanishing order 
of $L(s,\widetilde{\mu}\chi)$ at $s=1/2$ is either $0$ or $1$, provided
$N_F(\mathfrak{f})\ll D_{E}^{1/12}$.  One important application of Corollaries
\ref{cor1.4} and \ref{cor1.6} is the existence, for each $j\in \{0,1\}$, of a
character $\chi\in X_{E/F}^{\mathrm{un}}(k,\boldsymbol{\Phi})$ such that
$L^{(j)}(1/2,\widetilde{\mu}\chi)\neq 0$.  Moreover, in the imaginary quadratic
case, if $(2k+1, h_{E/F})=1$, then the work of \cite{Roh80, Shi76} shows that
the nonvanishing of $L(1/2,\widetilde{\mu}\chi)$ for one
$\chi\in X_{E/F}^{\mathrm{un}}(k,\boldsymbol{\Phi})$ implies the nonvanishing of
all central values in the family.  In general, such a one-implies-all principle
is not available.  Nevertheless, quantitative nonvanishing can still be obtained
from subconvexity bounds, or from bounds for $\ell$-torsion in class groups, as
in \cite{Mas07, Mas07b}.  The resulting nonvanishing statements, however, are
typically ineffective.  Obtaining effective quantitative nonvanishing remains an
open problem.

By combining Corollaries \ref{cor1.4} and \ref{cor1.6} with subconvexity bounds
over number fields, such as \cite{Yan26b}, we obtain quantitative nonvanishing
for canonical Hecke $L$-functions over CM fields.  In particular, when
$d=[F:\mathbb{Q}]\geq 14$, the result is effective.

\begin{thmx}\label{thmc}
Let $0<\varepsilon<10^{-3}$ and suppose that
$(1+k+|t|)^{15d}N_F(\mathfrak{f})^{11}\ll D_{E/F}^{1-\varepsilon}L(1,\eta)^{16/11}$. Let $0\leq \vartheta\leq 7/64$ be a parameter towards the Ramanujan conjecture for unitary cuspidal automorphic representations of $\mathrm{GL}_2/F$. Then, for
$j\in \{0,1\}$, we have
\begin{align*}
\sum_{\chi\in X_{E/F}^{\mathrm{un}}(k,\boldsymbol{\Phi})}\mathbf{1}_{L^{(j)}(1/2+it,\widetilde{\mu}\chi)\neq 0}\gg_{F,\varepsilon} L(1,\eta)D_{E/F}^{\frac{1-2\vartheta}{12}-\varepsilon}N_F(\mathfrak{f})^{-\frac{1}{2}+\frac{1-2\vartheta}{12}-\varepsilon},
\end{align*}
with an effective implied constant. Consequently, the following two bounds hold.
\begin{itemize}
\item For any $d\geq 1$ and $(1+k+|t|)^{15d}N_F(\mathfrak{f})^{11}\ll D_{E/F}^{1-\varepsilon}$,
\begin{equation}\label{e1.5}
\sum_{\chi\in X_{E/F}^{\mathrm{un}}(k,\boldsymbol{\Phi})}\mathbf{1}_{L^{(j)}(1/2+it,\widetilde{\mu}\chi)\neq 0}\gg_{F,\varepsilon} D_{E/F}^{\frac{1-2\vartheta}{12}-\varepsilon}N_F(\mathfrak{f})^{-\frac{1}{2}+\frac{1-2\vartheta}{12}-\varepsilon},
\end{equation}
with an ineffective implied constant.
\item If $d\geq 14$ and $(1+k+|t|)^{15d}N_F(\mathfrak{f})^{11}\ll D_{E/F}^{1-\varepsilon}$, then
\begin{equation}\label{1.5}
\sum_{\chi\in X_{E/F}^{\mathrm{un}}(k,\boldsymbol{\Phi})}\mathbf{1}_{L^{(j)}(1/2+it,\widetilde{\mu}\chi)\neq 0}\gg_{F,\varepsilon} D_{E/F}^{\frac{1-2\vartheta}{12}-\frac{1}{2d}-\varepsilon}N_F(\mathfrak{f})^{-\frac{1}{2}+\frac{1-2\vartheta}{12}-\varepsilon},
\end{equation}
with an effective implied constant.
\end{itemize}
\end{thmx}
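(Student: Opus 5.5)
The idea is to compare the first moment with the largest single term of the family. By Cauchy--Schwarz-free counting, a lower bound for $\sum_\chi L^{(j)}(1/2+it,\widetilde{\mu}\chi)$ combined with an upper bound for $\max_\chi |L^{(j)}(1/2+it,\widetilde{\mu}\chi)|$ gives
\begin{align*}
\#\{\chi:\ L^{(j)}(1/2+it,\widetilde{\mu}\chi)\neq 0\}\ \ge\ \frac{\big|\sum_{\chi}L^{(j)}(1/2+it,\widetilde{\mu}\chi)\big|}{\max_{\chi}|L^{(j)}(1/2+it,\widetilde{\mu}\chi)|}.
\end{align*}

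\textbf{Lower bound for the moment.} Apply Theorem \ref{thma} with $s=it$. For $j=1$, differentiate in $s$; since the identity in Theorem \ref{thma} is holomorphic in $|\Re s|\le\delta$, Cauchy's formula on a circle of radius $\asymp 1/\log$ preserves the error bound up to $\varepsilon$-losses. Take $\delta=\varepsilon$.

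The main term has size $\asymp D_{E/F}^{1/2}N_F(\mathfrak{f})^{\pm it}L(1,\eta)L^{(\mathfrak f)}(1,\eta)$. Near $t=0$ one must separate the two cases $\epsilon_{\mu,k}=\pm1$, as in Corollaries \ref{cor1.4} and \ref{cor1.6}: the combination $\mathcal M(s)+\epsilon\mathcal M(-s)$ or its derivative is $\gg D_{E/F}^{1/2-\varepsilon}L(1,\eta)^2$ once $L^{(\mathfrak f)}(1,\eta)\gg N(\mathfrak f)^{-\varepsilon}L(1,\eta)$.

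For general $t$ the two terms may cancel. I would handle this by choosing $j$ according to the root number, or by shifting $t$ slightly. This step needs care and I expect it to be a technical point.

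The hypothesis $(1+k+|t|)^{15d}N(\mathfrak f)^{11}\ll D_{E/F}^{1-\varepsilon}L(1,\eta)^{16/11}$ is exactly what makes the error \eqref{e1.3} smaller than the main term. Indeed, the ratio of error to main term is $D^{-1/16}(1+k+|t|)^{15d/16}N^{11/16}L(1,\eta)^{-1}$.

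\textbf{Upper bound for individual values.} Apply the subconvexity bound of \cite{Yan26b} to $L(1/2+it,\widetilde{\mu}\chi)$, viewed as the $\mathrm{GL}_2/F$ automorphic induction twisted by $\mu$. This gives conductor $Q\asymp D_{E/F}N(\mathfrak f)^2(1+k+|t|)^{2d}$ and a bound $Q^{1/4-(1-2\vartheta)/24+\varepsilon}$ (hybrid in the $D_{E/F}$ aspect). The derivative follows by Cauchy's estimate. The $k,t$ dependence is absorbed using the hypothesis; I would likely use a bound that is uniform and subconvex only in the $D_{E/F}$ and level aspects. Dividing then yields
\begin{align*}
L(1,\eta)\,D_{E/F}^{1/2-1/4+\ldots}
\end{align*}
with exponents matching the stated ones, namely $D^{(1-2\vartheta)/12}N^{-1/2+(1-2\vartheta)/12}$. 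Getting these exponents to match exactly is bookkeeping that I would check against the precise form of \cite{Yan26b}.

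\textbf{Consequences.} Bound \eqref{e1.5} follows from Siegel's ineffective bound $L(1,\eta)\gg D_{E/F}^{-\varepsilon}$ (together with $D_F$ fixed), which also removes the factor $L(1,\eta)^{16/11}$ from the hypothesis. For \eqref{1.5} with $d\ge14$, I would use Stark's effective bound $L(1,\eta)\gg D_{E/F}^{-1/(2d)}$, valid effectively when there is no exceptional zero. For CM fields of large degree, Stark shows that the Siegel-zero issue arises only from quadratic subfields, which is handled via the normal closure degree condition. The threshold $d\ge14$ presumably ensures that the hypothesis with $L(1,\eta)^{16/11}$ still follows, up to $\varepsilon$, from the stated hypothesis.

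The main obstacle is the uniform subconvexity input with the right exponents and dependence on $k,t$, together with nondegeneracy of the main term for general $t$.
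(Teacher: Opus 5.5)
Your route is the paper's. The paper obtains Theorem~\ref{thmc} by dividing the first moments of Corollaries~\ref{cor1.4} and~\ref{cor1.6} by the pointwise bound \eqref{8.2} from \cite[Corollary 1.7]{Yan26b}, and then inserting Siegel's or Stark's lower bound for $L(1,\eta)$. However, the step that produces the exponents fails as you wrote it. You take the conductor of $L(s,\widetilde{\mu}\chi)$ to be $D_{E/F}N_F(\mathfrak f)^2(1+k+|t|)^{2d}$. But canonical characters are ramified exactly at $\mathfrak D_{E/F}$ (condition (a)), so $\widetilde\mu\chi$ has conductor $\mathfrak f\mathcal O_E\mathfrak D_{E/F}$. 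Its automorphic induction to $\mathrm{GL}_2/F$ therefore has conductor $\mathfrak d_{E/F}\,N_{E/F}(\mathfrak f\mathfrak D_{E/F})=\mathfrak f^2\mathfrak d_{E/F}^2$, of norm $D_{E/F}^2N_F(\mathfrak f)^2$. With your conductor the quotient would be $D_{E/F}^{1/4+(1-2\vartheta)/24}$, so your ``$D_{E/F}^{1/2-1/4+\cdots}$ with exponents matching'' is internally inconsistent, and the beyond-Weyl pointwise bound it would need is not available. With the correct conductor, the exponent $\frac14-\frac{1-2\vartheta}{24}$ gives $(D_{E/F}N_F(\mathfrak f))^{\frac12-\frac{1-2\vartheta}{12}+\varepsilon}$, which is \eqref{8.2}. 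Dividing $D_{E/F}^{1/2}L(1,\eta)L^{(\mathfrak f)}(1,\eta)$ by this gives exactly $D_{E/F}^{\frac{1-2\vartheta}{12}}N_F(\mathfrak f)^{-\frac12+\frac{1-2\vartheta}{12}}$.

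Three smaller points belong to the same step:
\begin{enumerate}
\item A power of $1+k+|t|$ in the pointwise bound cannot be ``absorbed by the hypothesis'', which allows $(1+k+|t|)^{d}$ as large as $D_{E/F}^{1/15}$. The paper instead uses \eqref{8.2}, which contains no power of $1+k$.
\item Your own numbers put $L(1,\eta)L^{(\mathfrak f)}(1,\eta)\gg N_F(\mathfrak f)^{-\varepsilon}L(1,\eta)^2$, not $L(1,\eta)$, in the count. Your error/main ratio also requires $L(1,\eta)^{16}$, not $L(1,\eta)^{16/11}$, in the hypothesis, so ``exactly'' is not right. This is harmless for \eqref{e1.5} but matters for the effective assertions.
\item Stark's bound $L(1,\eta)\gg_F D_{E/F}^{-1/(2d)}$ is unconditional; the $D_{E/F}^{-1/(2d)}$ loss is what pays for a possible exceptional zero.
\end{enumerate}

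On the main term, which you leave open, the paper's argument uses only $s=0$. Corollary~\ref{cor1.4} gives $j=0$ when $\epsilon_{\mu,k}=1$, and Corollary~\ref{cor1.6} gives $j=1$ when $\epsilon_{\mu,k}=-1$. Tying $j$ to the sign is forced, not a convenience. The root number is $\epsilon_{\mu,k}$, so for $\epsilon_{\mu,k}=-1$ every $L(1/2,\widetilde\mu\chi)$ vanishes; consistently, the $s=0$ main term of Theorem~\ref{thma} is $(1+\epsilon_{\mu,k})\mathcal M_{\mathfrak f}(0,k)L(1,\eta)$.

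For $t\neq 0$ the main term is $\pi^{-d}D_{E/F}^{1/2}D_F^{1/2}L(1,\eta)\big(z+\epsilon_{\mu,k}X^{-2it}\overline{z}\big)$, where $z=L^{(\mathfrak f)}(1+2it,\eta)$ and $X=2^dD_{E/F}D_FN_F(\mathfrak f)$. Since $t\log X+\arg z$ is continuous and unbounded, this vanishes for suitable $t\neq0$. Neither ``shifting $t$'' nor the paper's argument covers such $t$, so what the method proves is the $t=0$ statement with $j$ chosen by $\epsilon_{\mu,k}$.

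Finally, for $j=1$ your justification $L^{(\mathfrak f)}(1,\eta)\gg N_F(\mathfrak f)^{-\varepsilon}L(1,\eta)$ is not enough, because $(L^{(\mathfrak f)})'(1,\eta)$ has no sign. You need $\log X+2\frac{(L^{(\mathfrak f)})'}{L^{(\mathfrak f)}}(1,\eta)\gg1$. This follows from $\frac{\Lambda'}{\Lambda}(1,\eta)=\sum_\rho\Re\frac{1}{1-\rho}>0$, together with the Gamma factor $\Gamma_{\mathbb R}(s+1)^d$ and the Euler factors at $v\mid\mathfrak f$, and it gives the lower bound $d\log(2\pi e^{\gamma})$, with $\gamma$ Euler's constant.
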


\begin{remark}
The ineffectivity in \eqref{e1.5} comes only from the possible exceptional zero
in the lower bound for $L(1,\eta)$. Using Stark's effective class-number lower
bound for CM fields \cite[Theorem 2]{Sta74}, this obstruction disappears when
$d\geq 14$, giving the effective estimate \eqref{1.5}. Improvements in effective
lower bounds for $L(1,\eta)$ would lower this threshold.
\end{remark}

\begin{remark}
For $j=0$, \eqref{e1.5} strengthens \cite[Theorem 1.2]{MY11} by allowing the
parameters to vary in the hybrid range
$(1+k+|t|)^{15d}N_F(\mathfrak{f})^{11}\ll D_{E/F}$, rather than keeping them
fixed. For $j=1$, Theorem \ref{thmc} is new beyond the classical case
$F=\mathbb{Q}$ and $\mu=\mathbf{1}$.
\end{remark}

\subsection{Ranks of Abelian Varieties}

By the theory of complex multiplication, a canonical Hecke character $\chi$ of
type $\Phi$ is associated with a CM abelian variety $A(\chi)$ over $E$, whose CM
field is generated by the values $\chi(\mathfrak{a})$, as $\mathfrak{a}$ ranges
over ideals of $E$ prime to the conductor of $\chi$. We denote by
$A(\chi)^{\mu}=A(\widetilde{\mu}\chi)$ the quadratic twist corresponding to the
twist of $\chi$ by $\mu$.

Combining \eqref{e1.5} in the case $j=k=0$ with the rank-zero theorem for the
associated CM abelian varieties used by Masri--Yang \cite{MY11}, and attributed
there to Tian--Zhang \cite{TZ08}, would give the existence of $\chi$ such that
$A(\chi)^{\mu}$ has Mordell--Weil rank zero. With the same BSD input for CM
abelian varieties, Theorem \ref{thmc} would therefore generalize
\cite[Theorem 1.3]{MY11}. However, the Tian--Zhang reference does not seem to
be available in published form. We therefore record instead a rank-one
consequence obtained by taking $j=1$ in Theorem \ref{thmc} and combining the
Gross--Zagier formula of Yuan--Zhang--Zhang \cite{YZZ13} with the Euler-system
method for CM points on Shimura curves, developed in particular by
Nekov'a\v{r} \cite{Nek07}.

\begin{thmx}\label{thmD}
Let $0<\varepsilon<10^{-3}$, and let $\mathfrak{f}\subseteq\mathcal{O}_F$ be
coprime to $2\mathfrak{D}_E$. Let $\mu$ be a quadratic character of conductor
$\mathfrak{f}$ with $\mu_{\infty}(-1)=-1$. Suppose that
$D_{E/F}\geq N_F(\mathfrak{f})^{11+\varepsilon}$. Then there are
\begin{equation}\label{1.7}
\gg_{F,\varepsilon}
D_{E/F}^{\frac{1-2\vartheta}{12}-\varepsilon}
N_F(\mathfrak{f})^{-\frac{1}{2}+\frac{1-2\vartheta}{12}-\varepsilon}
\end{equation}
characters $\chi\in X_{E/F}^{\mathrm{un}}(0,\boldsymbol{\Phi})$ such that
$A(\chi)^{\mu}$ has Mordell--Weil rank $1$ over $E$.
\end{thmx}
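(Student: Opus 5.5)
Taking $k=0$, the plan is to combine Theorem \ref{thmc} for $j=1$ (at $t=0$) with the Gross--Zagier formula of Yuan--Zhang--Zhang and the Kolyvagin-type Euler system argument of Nekov\'a\v{r}. The argument has three parts: check the sign, produce many nonvanishing central derivatives, and convert each one into a rank-one statement.

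First, the sign and the range. Since $k=0$ and $\mu_\infty(-1)=-1$, we get $\epsilon_{\mu,0}=\mu_\infty(-1)=-1$. Hence every $L(s,\widetilde{\mu}\chi)$ with $\chi\in X_{E/F}^{\mathrm{un}}(0,\boldsymbol{\Phi})$ has root number $-1$, its central value vanishes, and the derivative is the relevant quantity. The hypothesis $D_{E/F}\geq N_F(\mathfrak{f})^{11+\varepsilon}$ gives, at $k=0$ and $t=0$,
\[
N_F(\mathfrak{f})^{11}\le D_{E/F}^{1-\varepsilon'}
\]
for a suitably smaller $\varepsilon'$, possibly after shrinking $\varepsilon$ and absorbing constants depending on $F$. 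This is exactly the range of \eqref{e1.5}. Applying \eqref{e1.5} with $j=1$ then yields $\gg$ \eqref{1.7} characters $\chi$ with $L'(1/2,\widetilde{\mu}\chi)\neq0$.

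Second, the arithmetic translation. Decompose $A(\chi)^{\mu}=A(\widetilde{\mu}\chi)$ up to isogeny, via CM theory (Shimura), so that $L(s,A(\chi)^{\mu}/E)$ factors as the product of $L(s,\widetilde{\mu}\chi^{\sigma})$ over the embeddings $\sigma$ of the coefficient field $T_\chi$. The rank statement should be read as rank $1$ over $T_\chi$, i.e.\ $\mathrm{rank}_{\mathbb{Z}}=[T_\chi:\mathbb{Q}]$, as in Gross's convention. Rohrlich/Shimura-type Galois equivariance of central derivatives, or equivalently the Galois equivariance of Gross--Zagier heights, shows that nonvanishing of $L'(1/2,\widetilde{\mu}\chi)$ implies nonvanishing for every conjugate $\chi^\sigma$.

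Next, each $\widetilde{\mu}\chi$ has to be realized as a toric twist. Write $\widetilde{\mu}\chi$ restricted appropriately as a $\mathrm{GL}_2$ automorphic induction $\pi_\chi$ over $F$, and view $L(s,\widetilde{\mu}\chi)$ as $L(s,\pi\times\Omega)$ for a suitable weight-two Hilbert form and anticyclotomic character in the Yuan--Zhang--Zhang setup. Since the root number is $-1$, Tunnell--Saito and Prasad select a totally definite-at-all-but-one quaternion algebra, hence a Shimura curve, with a nonzero CM point Heegner cycle $P_\chi$. YZZ then gives $\widehat{h}(P_\chi)\doteq L'(1/2,\widetilde{\mu}\chi)\neq0$, so $P_\chi$ is non-torsion in the $\chi$-isotypic part of $A(\chi)^{\mu}(E)\otimes T_\chi$. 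Nekov\'a\v{r}'s Euler system of CM points on Shimura curves bounds that isotypic part by rank one. This gives analytic rank $=$ algebraic rank $=1$.

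The main obstacle is this second part, matching the CM abelian variety $A(\chi)^\mu$ over $E$ with the $(\pi,\Omega)$ framework. Three points need care: the $E$-rational points must decompose into the correct isotypic components; the Euler-system hypotheses (Nekov\'a\v{r} requires certain nonexceptional/big-image conditions, which are delicate in the CM case) must hold; and the conductor condition $(\mathfrak{f},2\mathfrak{D}_E)=1$ must be used to avoid bad local situations. I would first try to realize $A(\chi)^\mu$ as a factor of the Jacobian of the Shimura curve via Jacquet--Langlands for $\pi_\chi$ and apply the published results of YZZ and Nekov\'a\v{r} verbatim. I would fall back on the existing treatments of CM rank-one results, Rubin/Kolyvagin--Logachev style, if the big-image hypotheses fail.
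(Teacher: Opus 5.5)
Your overall route is the paper's. The paper justifies Theorem~\ref{thmD} only by taking $j=1$ in Theorem~\ref{thmc} and combining \cite{YZZ13} with \cite{Nek07}. Your analytic reduction is correct: $\epsilon_{\mu,0}=\mu_\infty(-1)=-1$, and $D_{E/F}\geq N_F(\mathfrak f)^{11+\varepsilon}$ gives $N_F(\mathfrak f)^{11}\leq D_{E/F}^{1-\varepsilon'}$ with $\varepsilon'=\varepsilon/(11+\varepsilon)$. So \eqref{e1.5} with $k=t=0$, $j=1$ and $\varepsilon'$ in place of $\varepsilon$ yields at least \eqref{1.7}.

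The arithmetic step, however, does not go through as you state it, because you cannot view $L(s,\widetilde{\mu}\chi)$ as the $L$-function in \cite{YZZ13}.
\begin{itemize}
\item The Gross--Zagier formula there computes the central derivative of a Rankin--Selberg $L$-function $L(s,\pi_K\otimes\Omega)$, which has degree $4$ over $F$. By contrast, $L(s,\widetilde{\mu}\chi)=L(s,\pi_\chi)$, with $\pi_\chi$ the automorphic induction of $\widetilde{\mu}\chi$ (a CM Hilbert form of parallel weight $2$), has degree $2$.
\item Any realization therefore carries a second factor. With an auxiliary CM extension $K/F$ and $\Omega=1$ one gets $L(s,\pi_\chi)L(s,\pi_\chi\otimes\eta_K)$. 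Taking $K=E$, $\pi=\pi_{\chi_0}$ for a fixed $\chi_0\in X_{E/F}^{\mathrm{un}}(0,\boldsymbol{\Phi})$ and $\Omega=\widetilde{\mu}\chi\chi_0^{-1}$ does not help: it gives $L(s,\widetilde{\mu}\chi)L(s,\widetilde{\mu}\chi\overline{\chi}_0^{\,2})$.
\item Since $L(1/2,\widetilde{\mu}\chi)=0$, the height of the CM point equals (up to nonzero constants for suitable test vectors) $L'(1/2,\widetilde{\mu}\chi)$ times an auxiliary central value. So $L'(1/2,\widetilde{\mu}\chi)\neq 0$ does not by itself make $P_\chi$ nontorsion.
\end{itemize}

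The missing ingredient is a nonvanishing result for that auxiliary value, compatible with the Saito--Tunnell local conditions. For example, choose $K$ with $L(1/2,\pi_\chi\otimes\eta_K)\neq 0$ via Waldspurger or Friedberg--Hoffstein type theorems over totally real $F$, as in the classical passage from analytic rank one to rank one.

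The same factor enters your Galois-conjugate step. Nonvanishing of central derivatives is not known to be Galois-stable directly, but nontorsionness of CM points is. The auxiliary value is a critical value, hence Galois-stable by Shimura. So the Gross--Zagier route does work once the auxiliary nonvanishing is supplied.

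The paper's one-sentence justification is silent on this input as well. It is equally silent on the CM-case hypotheses in Nekov\'a\v{r}'s Euler-system argument, which you rightly flag.
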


\begin{remark}
When $F=\mathbb{Q}$ and $\mu=\mathbf{1}$, Masri--Yang \cite{MY00} proved
Gross's conjecture, obtaining rank $1$ for all
$\chi\in X_{E/F}^{\mathrm{un}}(0,\boldsymbol{\Phi})$. For a nontrivial fixed
$\mu$-twist, or for $F\neq\mathbb{Q}$, Theorem \ref{thmD} appears to be new.
It gives a quantitative rank-one result, in contrast with the rank-zero result
of \cite[Theorem 1.3]{MY11}. In particular, it gives the explicit hybrid
threshold $D_{E/F}\geq N_F(\mathfrak{f})^{11+\varepsilon}$ for the existence
of twists $A(\chi)^{\mu}$ of Mordell--Weil rank $1$. Apart from its conductor
$\mathfrak{f}$, the bound \eqref{1.7} is uniform in the choice of $\mu$.
\end{remark}

For $k\geq 1$, the central values should instead be viewed as critical values
of higher-weight CM motives.  Their nonvanishing is naturally related to the
Bloch--Kato conjecture, which predicts the vanishing of the corresponding Selmer
groups and the absence of nontrivial motivic cycles in the relevant degree.
Thus the higher-weight case gives a motivic analogue of the rank-zero
phenomenon, rather than a direct Mordell--Weil rank statement.

\subsection{Further Remarks}

\begin{remark}[Comparison Principle]
Theorem \ref{thm3.1} is obtained by comparing Fourier expansions of an
Eisenstein series along the split torus and along the non-split torus
$E^{\times}\hookrightarrow \mathrm{GL}_2(F)$, using the same embedding
compatible with the inclusion $F\subset E$.  A higher-rank analogue, based on
comparing Fourier expansions associated with different embeddings
$\mathrm{GL}_2\hookrightarrow \mathrm{GL}_3$, was established in \cite{Yan25}.
Thus the present reciprocity formula compares different subgroups for a fixed
embedding, whereas the reciprocity formulas in loc. cit. compare different
embeddings of the same subgroup.
\end{remark}

\begin{remark}[Dyadic Places]
The assumption that every prime of $F$ above $2$ is unramified in $E$ is imposed
only to keep the canonical family in its simplest form.  At ramified dyadic
places, Rohrlich's local classification allows several conductor exponents.
One could remove this assumption by replacing, at such places $v$, the
characteristic function of $\mathcal{O}_{E_v}^{\times}$ by that of a suitable
order, without changing the spectral reciprocity mechanism.  We do not pursue
this additional bookkeeping here.
\end{remark}

\begin{remark}[Flexibility]
The spectral reciprocity formula in Theorem \ref{thm3.1} is flexible enough to
treat other arithmetic families.  For instance, taking
$\pi=|\cdot|^s\boxplus |\cdot|^{-s}$ and choosing suitable test functions
$h(\cdot,s)$ gives moment formulas for class group $L$-functions, yielding an
adelic proof of the first moment formula in \cite{Yan20}. Similar specializations apply to $L$-functions attached
to algebraic anticyclotomic Hecke characters.
\end{remark}

\subsection{Outline of the Paper}

We briefly describe the organization of the paper.

\begin{itemize}
\item In \textsection\ref{sec2} we recall the toric Fourier expansion of
Eisenstein series, and in \textsection\ref{section3} we prove the Fourier
spectral reciprocity formula, Theorem \ref{thm3.1}.

\item In \textsection\ref{sec3} we construct the arithmetic and automorphic data
used in the specialization to canonical Hecke characters.

\item In \textsection\ref{sect5}--\textsection\ref{sec7} we carry out the
explicit local and global computations: the toric integrals
$Z(\chi;h(\cdot,s))$, the degenerate terms
$J_{\mathrm{Const}}(h(\cdot,s))$ and $J_{\mathrm{Res}}(h(\cdot,s))$, and the
dual contribution $J_{\mathrm{Dual}}^{\heartsuit}(h(\cdot,s))$.

\item Finally, in \textsection\ref{sec8}, we combine these computations to prove
the first moment formula, Theorem \ref{thma}.
\end{itemize}

\subsection{Notation}\label{notation}

Let $F$ be a number field with ring of integers $\mathcal{O}_F$, adele ring
$\mathbb{A}_F$, absolute norm $N_F$, different $\mathfrak{D}_F$, and degree
$d=[F:\mathbb{Q}]$. Let $\Sigma_F$, $\Sigma_{F,\fin}$, and
$\Sigma_{F,\infty}$ denote respectively the sets of all, finite, and
Archimedean places of $F$.  For $v\in \Sigma_F$, let $F_v$ be the completion and
$|\cdot|_v$ the normalized absolute value.  If $v\in \Sigma_{F,\fin}$, write
$\mathcal{O}_v$ for the ring of integers, $\mathfrak{p}_v$ for the maximal
ideal, $q_v=\#(\mathcal{O}_v/\mathfrak{p}_v)$, and choose a uniformizer
$\varpi_v$.  Let $e_v$ be normalized by $e_v(\varpi_v)=1$, and set $d_{F_v}:=e_v(\mathfrak{D}_F)$. 

For an integral ideal $\mathfrak{n}\subseteq \mathcal{O}_F$, we write
$v\mid\mathfrak{n}$ if $\mathfrak{n}\subseteq \mathfrak{p}_v$, and $N_F(\mathfrak{n})=\#(\mathcal{O}_F/\mathfrak{n})$. 
On $\mathbb{A}_F^{\times}$ we put $|\cdot|=\prod_{v\in \Sigma_F}|\cdot|_v$.

The finite and completed Dedekind zeta functions are denoted by
\begin{align*}
\zeta_F(s)=\prod_{v<\infty}\zeta_{F_v}(s),\quad
\Lambda_F(s)=\prod_{v\in \Sigma_F} \zeta_{F_v}(s),
\end{align*}
and are always understood by meromorphic continuation.  We use the standard
additive character $\psi=\psi_{\mathbb Q}\circ \Tr_F$ of $F\backslash\mathbb A_F$, where $\psi_{\mathbb Q}(x)=e^{2\pi i x}$ on
$\mathbb R$.  Let $dt_v$ be the self-dual measure with respect to $\psi_v$, and
put $dt=\prod_vdt_v$.  Multiplicative measures are normalized by $d^{\times}t_v=|t_v|_v^{-1}dt_v$ if $v\in \Sigma_{F,\infty}$, and $d^{\times}t_v=\zeta_{F_v}(1)|t_v|_v^{-1}dt_v$ if $v\in \Sigma_{F,\fin}$. Thus, for $v\in \Sigma_{F,\fin}$, $\Vol(\mathcal{O}_v^{\times},d^{\times}t_v)=q_v^{-d_{F_v}/2}$, and globally
\begin{align*}
\Vol(F\backslash\mathbb A_F,dt)=1,\quad
\Vol(F^{\times}\backslash\mathbb A_F^{(1)},d^{\times}t)
=\underset{s=1}{\Res}\ \zeta_F(s).
\end{align*}
Here $\mathbb A_F^{(1)}$ is the subgroup of ideles of norm $1$, and we write $\widehat{C}_F:=\widehat{F^{\times}\backslash\mathbb A_F^{(1)}}$. 

Let $E/F$ be a quadratic extension, and let $\eta=\eta_{E/F}$ be the associated
quadratic character.  We use the same conventions over $E$; in particular, for
$w\in \Sigma_{E,\fin}$, $\mathcal{O}_{E_w}$ denotes the ring of integers of $E_w$, and
$d_{E_w}:=e_w(\mathfrak{D}_E)$.

Let $G=\mathrm{GL}_2$, let $N$ be the 
unipotent radical of the Borel subgroup. We take $K_v=G(\mathcal{O}_v)$ if $v\in \Sigma_{F,\fin}$. 

For an automorphic representation $\pi=\otimes_v'\pi_v$, let $L(s,\pi)$ denote
the finite $L$-function, $L_{\infty}(s,\pi)$ its Archimedean factor, and $\Lambda(s,\pi)=L_{\infty}(s,\pi)L(s,\pi)$. 

For $\eta_1,\eta_2\in \widehat{C}_F$, write $\eta_1\boxplus \eta_2:=\Ind(\eta_1\otimes\eta_2)$ 
for the corresponding induced representation.  The standard generic
character of $[N]$ is
\begin{align*}
\theta\left(\begin{pmatrix}
1 & b\\
& 1
\end{pmatrix}\right)=\psi(b),\quad b\in F\backslash\mathbb A_F.
\end{align*}

Finally, set
\begin{equation}\label{eq1.1}
\Gamma_{\mathbb{R}}(s):=\pi^{-s/2}\Gamma(s/2),\quad
\Gamma_{\mathbb{C}}(s):=2(2\pi)^{-s}\Gamma(s).
\end{equation}
Throughout, $\varepsilon>0$ denotes an arbitrarily small constant which may vary
from line to line.

\section{Fourier Expansions over Torus}\label{sec2}
\subsection{Eisenstein series}
Let $\mu$ and $\omega$ be characters of
$F^{\times}\backslash\mathbb{A}_F^{\times}$, and let $s\in\mathbb{C}$. Denote by $\widetilde{\mu}:=\mu\circ\mathrm{Nr}_{E/F}$. 
\subsubsection{Eisenstein series}
Let $h(\cdot,s)$ be a section in the induced representation $\mu|\cdot|^s\boxplus \overline{\mu}\omega|\cdot|^{-s}$. For
$g\in G(\mathbb{A}_F)$, define
\begin{equation}\label{2.1}
E(g;h(\cdot,s))
:=
\sum_{\delta\in B(F)\backslash G(F)}
h(\delta g,s).
\end{equation}
Then $E(g;h(\cdot,s))$ converges absolutely for
$\Re(s)\gg 1$ and admits meromorphic continuation to $s\in\mathbb{C}$. 
For brevity, we write $E(g,s)$ for $E(g;h(\cdot,s))$; henceforth, $E(g,s)$ is identified with its meromorphic continuation. 

\subsubsection{Godement Section}
Let
$\Phi(\cdot,\cdot)$ be a suitable function on $\mathbb{A}_F^2$. We construct the section $h(\cdot,s)$ explicitly in the following Godement form, as a Tate-type integral:
\begin{equation}\label{e2.2}
h(g,s):=\mu(\det g)|\det g|^{1/2+s}
\int_{\mathbb{A}_F^{\times}}
\Phi((0,t)g)\mu^2\overline{\omega}(t)|t|^{1+2s}d^{\times}t. 
\end{equation}

\subsubsection{Fourier Expansion}\label{sec2.1.2}
Let
\begin{align*}
W(g,s):=\int_{[N]}E(ug,s)\overline{\theta}(u)du
\end{align*}
be the Whittaker function attached to $E$. Explicitly, in the form \eqref{e2.2}, we have 
\begin{equation}\label{2.2}
W(g,s)=\mu(\det g)|\det g|^{1/2+s}
\int_{\mathbb{A}_F^{\times}}\int_{\mathbb{A}_F}
\Phi((t,ct)g)\overline{\psi}(c)dc
\mu^{2}\overline{\omega}(t)|t|^{1+2s}d^{\times}t.
\end{equation}

The Fourier expansion of $E$ takes the form
\begin{equation}\label{eq2.6}
E(g,s)=h(g,s)+h^{\Diamond}(g,s)+\mathcal{G}(g,s),
\end{equation}
where
\begin{align*}
h^{\Diamond}(g,s):=\int_{N(\mathbb{A}_F)}h(wug,s)du,\quad 
\mathcal{G}(g,s):=\sum_{\alpha\in F^{\times}}
W\left(\begin{pmatrix}
\alpha\\
& 1
\end{pmatrix}g,s\right).
\end{align*}

\subsection{Toric Period Integrals}\label{sec2.5}
\subsubsection{Split Period Integrals}\label{sec2.3.1}
For $\Re(s)\gg 1$ and $\xi\in \widehat{C}_F$, define 
\begin{equation}\label{2.5}
\Psi(W(\cdot,s),\xi|\cdot|^{\lambda}):=\int_{\mathbb{A}_F^{\times}}
W\left(\begin{pmatrix}
y\\
& 1
\end{pmatrix},s\right)\xi(y)|y|^{\lambda}d^{\times}y.
\end{equation}
This integral converges absolutely when $\Re(\lambda)-|\Re(s)|$ is sufficiently large, and satisfies 
\begin{equation}\label{e2.6}
\Psi(W(\cdot,s),\xi|\cdot|^{\lambda})\propto \Lambda(1/2+\lambda+s,\mu\xi)\Lambda(1/2+\lambda-s,\overline{\mu}\omega\xi).
\end{equation}
It admits a meromorphic continuation $\widetilde{\Psi}(W(\cdot,s),\xi|\cdot|^{\lambda})$ to $(s,\lambda)\in \mathbb{C}^2$.

\subsubsection{Non-split Period Integrals}
Let $E/F$ be a quadratic extension of number fields. Let
$\mathcal{C}_{E/F}=E^{\times}\mathbb{A}_F^{\times}\backslash\mathbb{A}_E^{\times}$,
which is compact, and let $\widehat{\mathcal{C}}_{E/F}$ denote its
Pontryagin dual. Write $E=F(\tau)$,
where $\tau$ is a fixed nonsquare in $F^{\times}$. Fix $\theta\in E^{\times}$
such that $\theta^2=\tau$, and consider the embedding
\begin{equation}\label{equ2.4}
\mathbb{A}_E^{\times}\hookrightarrow M_{2\times 2}(\mathbb{A}_F),\quad  x=a+b\theta\mapsto \iota(x):=\begin{pmatrix}
a & b\tau \\
b & a
\end{pmatrix},\ \ a, b\in \mathbb{A}_F.
\end{equation}

For $\Re(s)\gg 1$ and $\chi\in \widehat{C}_E$, we define, for a suitable function $\Phi(\cdot,\cdot)$ on $\mathbb{A}_F^2$, the zeta integral
\begin{align*}
Z(s,\Phi,\chi)=\int_{\mathbb{A}_E^{\times}}
\Phi((0,1)\iota(x))\chi(x)|x|_{\mathbb{A}_E}^{1/2+s}d^{\times}x.
\end{align*}
By Tate's thesis, $Z(s,\Phi,\chi)$ converges absolutely for $\Re(s)\gg 1$, admits meromorphic continuation, and satisfies
\begin{align*}
Z(s,\Phi,\chi)\propto \Lambda(1/2+s,\chi). 
\end{align*}
Henceforth, we use the same notation $Z(s,\Phi,\chi)$ for this meromorphic continuation.

If $\overline{\omega}=\chi|_{\mathbb{A}_F^{\times}}$, define
\begin{align*}
Z(\chi;h(\cdot,s)):=\int_{\mathbb{A}_F^{\times}\backslash\mathbb{A}_E^{\times}}h(\iota(x),s)\chi(x)d^{\times}x. 
\end{align*}
The following lemma relates this period integral to the Tate zeta integral above.

\begin{lemma}\label{lem2.1}
Let $\chi\in \widehat{C}_E$ be such that
$\overline{\omega}=\chi|_{\mathbb{A}_F^{\times}}$. We have
\begin{equation}\label{2.6}
\int_{\mathcal{C}_{E/F}}E(\iota(x),s)
\chi(x)d^{\times}x=Z(\chi;h(\cdot,s)).
\end{equation}
Moreover, if $h(\cdot,s)$ is of the form \eqref{e2.2}, then 
\begin{equation}\label{2.7}
Z(\chi;h(\cdot,s))=Z(s,\Phi,\widetilde{\mu}\chi).
\end{equation}
\end{lemma}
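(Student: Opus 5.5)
The plan is to prove \eqref{2.6} by the standard unfolding argument, and \eqref{2.7} by a change of variables inside the Godement section.

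\emph{Unfolding for \eqref{2.6}.} Work first with $\Re(s)\gg1$, where the Eisenstein series \eqref{2.1} converges absolutely; the general case then follows by meromorphic continuation.

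The key input is that the nonsplit torus $\iota(E^{\times})$ acts simply transitively on $B(F)\backslash G(F)\cong\mathbb{P}^1(F)$. Indeed, the stabilizer in $\iota(E^\times)$ of the line through $(0,1)$ is the set of $\iota(x)$ with $(0,1)\iota(x)\in F^\times(0,1)$, i.e.\ $(b,a)$ proportional to $(0,1)$. This forces $b=0$, so $x=a\in F^{\times}$.

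For transitivity, note that $(0,1)\iota(a+b\theta)=(b,a)$ runs over all of $F^2\setminus\{0\}$. Hence $B(F)\backslash G(F)=B(F)\backslash B(F)\iota(E^{\times})\cong F^{\times}\backslash E^{\times}$.

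Therefore, for $x\in\mathbb{A}_E^\times$,
\begin{align*}
E(\iota(x),s)=\sum_{\gamma\in F^\times\backslash E^\times}h(\iota(\gamma x),s).
\end{align*}
Integrating against $\chi$ over $\mathcal{C}_{E/F}=E^\times\mathbb{A}_F^\times\backslash\mathbb{A}_E^\times$ and folding the sum gives an integral over $\mathbb{A}_F^{\times}\backslash\mathbb{A}_E^{\times}$. Two facts make this legitimate:
\begin{itemize}
\item the integrand is invariant under $\mathbb{A}_F^\times$, because $h(zg,s)=\omega(z)h(g,s)$ for the central character $\omega$ of $\pi$, and $\chi|_{\mathbb{A}_F^\times}=\overline{\omega}$;
\item $\chi$ is trivial on $E^\times$.
\end{itemize}
Absolute convergence of the folded integral for $\Re(s)$ large comes from bounding $|h|$ by the section with $\Phi$ replaced by $|\Phi|$ and real $s$. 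The resulting Tate integral converges, which justifies Fubini. Both sides are meromorphic in $s$, so the identity persists.

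\emph{Change of variables for \eqref{2.7}.} Insert \eqref{e2.2} with $g=\iota(x)$. The relevant evaluations are
\begin{align*}
\det\iota(x)=\mathrm{Nr}_{E/F}(x),\qquad |\det\iota(x)|=|x|_{\mathbb{A}_E},\qquad (0,t)\iota(x)=(0,1)\iota(tx)\ \text{ for } t\in\mathbb{A}_F^\times.
\end{align*}
The integrand of $Z(\chi;h(\cdot,s))$ then becomes
\begin{align*}
\widetilde{\mu}(x)\chi(x)|x|_{\mathbb{A}_E}^{1/2+s}\int_{\mathbb{A}_F^\times}\Phi((0,1)\iota(tx))\,\mu^2\overline{\omega}(t)|t|^{1+2s}\,d^\times t.
\end{align*}
For $t\in\mathbb{A}_F^\times$ we have $\widetilde{\mu}\chi(t)=\mu(t)^2\overline{\omega}(t)$ and $|t|_{\mathbb{A}_E}^{1/2+s}=|t|^{1+2s}$. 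So the $t$-integrand equals $\widetilde{\mu}\chi(tx)|tx|_{\mathbb{A}_E}^{1/2+s}\Phi((0,1)\iota(tx))$ divided by the $x$-factor. Combining the quotient integral over $\mathbb{A}_F^\times\backslash\mathbb{A}_E^\times$ with the inner $\mathbb{A}_F^\times$ integral reconstitutes the full integral over $\mathbb{A}_E^\times$, which is $Z(s,\Phi,\widetilde{\mu}\chi)$. This holds for $\Re(s)\gg1$ and then by continuation.

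\emph{Main obstacle.} The delicate point is the measure compatibility: the quotient measure on $\mathbb{A}_F^\times\backslash\mathbb{A}_E^\times$ must be the one induced by $d^\times x$ on $\mathbb{A}_E^\times$ and $d^\times t$ on $\mathbb{A}_F^\times$. This is exactly what the normalization implicit in the definitions means, and I would fix it that way. A secondary check is the convergence needed to justify the unfolding, which I would handle via the positivity bound described above.
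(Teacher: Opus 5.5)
Your proposal is correct and follows the paper's proof: you unfold using $G(F)=B(F)\iota(E^{\times})$ with $B(F)\cap\iota(E^{\times})\simeq F^{\times}$, then insert the Godement section and fold the $\mathbb{A}_F^{\times}$-integral into the quotient to obtain the full Tate integral over $\mathbb{A}_E^{\times}$ (the paper leaves this folding step implicit). One small point of wording: it is $F^{\times}\backslash E^{\times}$, not $\iota(E^{\times})$ itself, that acts simply transitively on $B(F)\backslash G(F)$, as your own stabilizer computation shows.
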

\begin{proof}
Define
\begin{equation}\label{2.8}
P(s,\chi) :=\int_{\mathcal{C}_{E/F}}E(\iota(x),s)
\chi(x)d^{\times}x.
\end{equation}
Since $\mathcal{C}_{E/F}$ is compact, $P(s,\chi) $ is meromorphic, with its analytic behavior inherited from that of $E(\iota(x),s)$.

Suppose $\Re(s)\gg 1$. Since $G(F)=B(F)\iota(E^{\times})$ and
$B(F)\cap \iota(E^{\times})\simeq F^{\times}$, it follows from \eqref{2.1} that
\begin{equation}\label{eq2.4}
P(s,\chi) =\int_{E^{\times}\mathbb{A}_F^{\times}\backslash\mathbb{A}_E^{\times}}\sum_{\delta\in F^{\times}\backslash E^{\times}}h(\iota(\delta x),s)\chi(x)d^{\times}x.
\end{equation}

Changing variables and using the assumption
$\overline{\omega}=\chi|_{\mathbb{A}_F^{\times}}$, we obtain \eqref{2.6} from \eqref{eq2.4}. If $h(\cdot,s)$ is of the form \eqref{e2.2}, then \eqref{2.6} gives 
\begin{align*}
Z(\chi;h(\cdot,s))=\int_{\mathbb{A}_E^{\times}}
\Phi((0,1)\iota(x))\mu(\det \iota(x))\chi(x)|\det \iota(x)|_{\mathbb{A}_F}^{1/2+s}d^{\times}x.	
\end{align*}
Using $\det \iota(x)=\mathrm{Nr}_{E/F}(x)$, this becomes
\begin{align*}
Z(\chi;h(\cdot,s))=\int_{\mathbb{A}_E^{\times}}
\Phi((0,1)\iota(x))\widetilde{\mu}\chi(x)|x|_{\mathbb{A}_E}^{1/2+s}d^{\times}x=Z(s,\Phi,\widetilde{\mu}\chi).
\end{align*}
This proves \eqref{2.7}.
\end{proof}

\section{A Spectral Reciprocity of Fourier Type}\label{section3}
Let $E/F$ be a quadratic extension of number fields, and let $\eta$ be the quadratic character associated with $E/F$. 

\SR*

We refer to $J_{\mathrm{Const}}(h(\cdot,s))$ as the \textit{constant term}, since it arises from the constant terms of Eisenstein series. The term $J_{\mathrm{Dual}}(h(\cdot,s))$ is called the \textit{dual side}. Finally, $J_{\mathrm{Res}}(h(\cdot,s))$ is called the \textit{residual term}, since it comes from the residues of the spectral periods on the dual side.

\subsection{A Coarse Spectral Reciprocity}
\begin{prop} \label{prop2.2}
Let $s\in \mathbb{C}$ and $r>10+|\Re(s)|$. We have 
\begin{align*}
\sum_{\chi\in \widehat{C}_E:\
\chi|_{\mathbb{A}_F^{\times}}=\overline{\omega}}\frac{Z(\chi;h(\cdot,s))}{L(1,\eta)}=h(I_2,s)+h^{\Diamond}(I_2,s)
+J_{\mathrm{Dual}}(h(\cdot,s)),
\end{align*}
where 
\begin{align*}
J_{\mathrm{Dual}}(h(\cdot,s)):=\frac{1}{\underset{s=1}{\Res}\ \zeta_F(s)}\sum_{\xi\in \widehat{C}_F}\frac{1}{2\pi i}\int_{(r)}\Psi(W(\cdot,s),\xi|\cdot|^{\lambda})d\lambda.
\end{align*}
Here $\int_{(r)}$ means the integral is over the vertical path $r+i\mathbb{R}$. 
\end{prop}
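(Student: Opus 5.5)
The plan is to integrate the Fourier expansion \eqref{eq2.6} of $E(g,s)$ over the compact group $\mathcal{C}_{E/F}$ along $\iota$, and then sum over the characters $\chi$ with $\chi|_{\mathbb{A}_F^{\times}}=\overline{\omega}$. First, by Lemma \ref{lem2.1}, each $Z(\chi;h(\cdot,s))$ is the $\chi$-period of $x\mapsto E(\iota(x),s)$ on $\mathcal{C}_{E/F}$. Then Fourier inversion on $\mathcal{C}_{E/F}$ gives
\[
\sum_{\chi|_{\mathbb{A}_F^{\times}}=\overline{\omega}} Z(\chi;h(\cdot,s)) = \Vol(\mathcal{C}_{E/F})\, E(\iota(1),s)=\Vol(\mathcal{C}_{E/F})\,E(I_2,s).
\]
To justify this I will use the smoothness of $E(\iota(x),s)$ in $x$, which gives absolute convergence of the spectral expansion. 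This evaluation is valid for every $s$ away from poles, and in particular it extends by meromorphic continuation.

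With the paper's measure normalizations, I expect $\Vol(\mathcal{C}_{E/F})$ to equal $\Res\zeta_E/\Res\zeta_F$, which is $\Lambda(1,\eta)$ up to the archimedean normalization. So dividing by $L(1,\eta)$ should produce exactly $E(I_2,s)$. Here the archimedean $\Gamma$-factors need care: with $d^{\times}t_v=|t_v|^{-1}dt_v$ at real places, the local volume of $\mathbb{R}^\times\backslash\mathbb{C}^\times$ should match. This constant bookkeeping is routine but must be checked.

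Next, expand $E(I_2,s)=h(I_2,s)+h^{\Diamond}(I_2,s)+\mathcal{G}(I_2,s)$ using \eqref{eq2.6}. The first two terms are already the constant terms in the statement. For $\mathcal{G}(I_2,s)=\sum_{\alpha\in F^\times}W(\mathrm{diag}(\alpha,1),s)$, I apply Mellin inversion on $F^\times\backslash\mathbb{A}_F^\times$. Decompose $\mathbb{A}_F^\times=\mathbb{A}_F^{(1)}\times\mathbb{R}_{>0}$, and set $f(y)=\sum_{\alpha}W(\mathrm{diag}(\alpha y,1),s)$, a function on $F^\times\backslash\mathbb{A}_F^\times$. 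Its Mellin transform against $\xi|\cdot|^\lambda$ unfolds to $\Psi(W(\cdot,s),\xi|\cdot|^{\lambda})$ by \eqref{2.5}.

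Mellin inversion then gives
\[
f(1)=\frac{1}{\Vol(F^\times\backslash\mathbb{A}_F^{(1)})}\sum_{\xi\in\widehat{C}_F}\frac{1}{2\pi i}\int_{(r)}\Psi(W(\cdot,s),\xi|\cdot|^{\lambda})\,d\lambda ,
\]
with $\Vol(F^\times\backslash\mathbb{A}_F^{(1)})=\Res_{s=1}\zeta_F(s)$ by the Notation section. The line $\Re\lambda=r>10+|\Re(s)|$ is chosen so that the unfolding converges absolutely.

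The main obstacle is justifying the inversion: I need absolute convergence of the sum over $\xi$ together with the $\lambda$-integral. For this I will show that $f(y)$ decays rapidly as $|y|\to\infty$, since $W$ is a Schwartz-type Whittaker function coming from $\Phi$ via \eqref{2.2}. I will also show that $f$ grows at most polynomially as $|y|\to0$, because $f$ equals $E$ minus its constant term. Together these make $f(y)|y|^{r}$ integrable. Smoothness of $f$ in the $\mathbb{A}_F^{(1)}$ variable, which holds since $\Phi$ is $K$-finite and smooth, then gives rapid decay of $\Psi$ in both $\xi$ and $\Im\lambda$. This lets me swap the order of sum and integral and apply Fourier inversion on the compact group $F^\times\backslash\mathbb{A}_F^{(1)}$ and Mellin inversion on $\mathbb{R}_{>0}$. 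Assembling the three pieces gives the proposition.
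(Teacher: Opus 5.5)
Your argument is the paper's argument. You use Lemma~\ref{lem2.1} and Fourier inversion on $\mathcal{C}_{E/F}$, after twisting by a fixed $\chi_0$ with $\chi_0|_{\mathbb{A}_F^{\times}}=\overline{\omega}$. You then insert the Fourier expansion \eqref{eq2.6} and Mellin-invert $|y|^{r}\sum_{\alpha}W(\diag(\alpha y,1),s)$ on $F^{\times}\backslash\mathbb{A}_F^{\times}$, with $\Vol(F^{\times}\backslash\mathbb{A}_F^{(1)})=\Res_{s=1}\zeta_F(s)$. Your justification of the inversion is sound: decay as $|y|\to\infty$, moderate growth via $E$ minus its constant term, and smoothness controlling the $\xi$-sum and the $\Im\lambda$-integral. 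It is in fact closer to what pointwise inversion at $y=1$ needs than the paper's bare appeal to $L^1\cap L^2$.

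The one constant you leave unverified is where the argument fails, but not for the reason you suspect. There is no $\Lambda$ versus $L$ issue. In the paper's normalization $\zeta_F$ and $\zeta_E$ are finite Dedekind zeta functions, and the archimedean factors are already absorbed into $\Vol(F^{\times}\backslash\mathbb{A}_F^{(1)})=\Res\zeta_F$ and $\Vol(E^{\times}\backslash\mathbb{A}_E^{(1)})=\Res\zeta_E$. So $\Res\zeta_E/\Res\zeta_F$ is exactly the finite value $L(1,\eta)$.

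The error is in equating $\Vol(\mathcal{C}_{E/F})$ with this ratio. The measure on $\mathcal{C}_{E/F}$ is the quotient $d^{\times}x/d^{\times}t$, which is the one for which the unfolding in Lemma~\ref{lem2.1} holds. For $t\in\mathbb{A}_F^{\times}$ one has $|t|_{\mathbb{A}_E}=|t|_{\mathbb{A}_F}^{2}$, so the two $\mathbb{R}_{>0}$-factors are matched by $v\mapsto v^{2}$, not isometrically. Take $\phi$ on $\mathbb{R}_{>0}$ with $\int_0^{\infty}\phi(u)\,du/u=1$, and fibre $E^{\times}\backslash\mathbb{A}_E^{\times}\to\mathcal{C}_{E/F}$ with fibre $F^{\times}\backslash\mathbb{A}_F^{\times}$, using the splitting convention of \eqref{2.12} for both fields. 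Then
\begin{align*}
\Res_{s=1}\zeta_E(s)&=\int_{E^{\times}\backslash\mathbb{A}_E^{\times}}\phi(|x|_{\mathbb{A}_E})\,d^{\times}x=\int_{\mathcal{C}_{E/F}}\int_{F^{\times}\backslash\mathbb{A}_F^{\times}}\phi\big(|t|_{\mathbb{A}_F}^{2}|x|_{\mathbb{A}_E}\big)\,d^{\times}t\,d^{\times}x\\
&=\frac{1}{2}\Res_{s=1}\zeta_F(s)\cdot\Vol(\mathcal{C}_{E/F}),
\end{align*}
so $\Vol(\mathcal{C}_{E/F})=2L(1,\eta)$.

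As a check, take $E=\mathbb{Q}(\sqrt{-3})$, which has class number one. Then $\mathcal{C}_{E/F}\cong\big(\mathbb{R}^{\times}\backslash\mathbb{C}^{\times}\times\widehat{\mathbb{Z}}^{\times}\backslash\widehat{\mathcal{O}}_E^{\times}\big)/(\mathcal{O}_E^{\times}/\{\pm1\})$. Its volume is $2\pi\cdot 3^{-1/2}/3=2\pi/(3\sqrt{3})$, whereas $L(1,\chi_{-3})=\pi/(3\sqrt{3})$.

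Carrying out your check therefore gives $\sum_{\chi}Z(\chi;h(\cdot,s))=2L(1,\eta)\,E(I_2,s)$, i.e.\ the identity with $2L(1,\eta)$ in the denominator. The paper's \eqref{2.10} makes the same identification you propose. Hence, with the measures of \S\ref{notation}, the printed statement is off by this factor of $2$. The rest of your argument is unaffected.
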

\begin{proof}
Fix one Hecke character $\chi_0$ of $E^{\times}\backslash\mathbb A_E^{\times}$
such that $\chi_0|_{\mathbb A_F^{\times}}=\overline{\omega}$. Then
$x\mapsto E(\iota(x),s)\chi_0(x)$ is well-defined on $\mathcal C_{E/F}$.
Fourier inversion on the compact group $\mathcal C_{E/F}$ gives
\begin{align*}
E(I_2,s)=\Vol(\mathcal C_{E/F})^{-1}
\sum_{\rho\in \widehat{\mathcal C}_{E/F}}
\int_{\mathcal C_{E/F}}
E(\iota(x),s)\chi_0(x)\rho(x)d^{\times} x.
\end{align*}
Equivalently, after writing $\chi=\chi_0\rho$, this becomes
\begin{equation}\label{2.9}
E(I_2,s)=\Vol(\mathcal{C}_{E/F})^{-1}\sum_{\chi\in \widehat{C}_E:\
\chi|_{\mathbb{A}_F^{\times}}=\overline{\omega}}\int_{\mathcal{C}_{E/F}}E(\iota(x),s)
\chi(x)d^{\times}x.
\end{equation}

By definition,
\begin{align*}
\mathcal{C}_{E/F}\simeq (E^{\times}\backslash\mathbb{A}_E^{(1)})/(F^{\times}\backslash\mathbb{A}_F^{(1)}).
\end{align*}
Hence
\begin{equation}\label{2.10}
\Vol(\mathcal{C}_{E/F})=\frac{\Vol(E^{\times}\backslash\mathbb{A}_E^{(1)})}{\Vol(F^{\times}\backslash\mathbb{A}_F^{(1)})}=\frac{\underset{s=1}{\Res}\ \zeta_E(s)}{\underset{s=1}{\Res}\ \zeta_F(s)}=L(1,\eta).
\end{equation}

Substituting \eqref{2.10} into \eqref{2.9}, and using Lemma \ref{lem2.1}, we obtain
\begin{equation}\label{2.11}
E(I_2,s)=L(1,\eta)^{-1}\sum_{\chi\in \widehat{C}_E:\
\chi|_{\mathbb{A}_F^{\times}}=\overline{\omega}}Z(\chi;h(\cdot,s)). 
\end{equation}

For $y\in \mathbb{A}_F^{\times}$, consider the auxiliary function
\begin{align*}
F(y):=|y|^{r}\sum_{\alpha\in F^{\times}}
W\left(\begin{pmatrix}
\alpha y\\
& 1
\end{pmatrix},s\right).
\end{align*}

Since $r>10+|\Re(s)|$, the standard bounds for Whittaker functions, for example \cite[Proposition 3.2.3]{MV10}, imply that
\begin{align*}
F(y)\in L^1(F^{\times}\backslash\mathbb{A}_F^{\times})\cap L^2(F^{\times}\backslash\mathbb{A}_F^{\times}).
\end{align*}
We may therefore expand $F(\cdot)$ along the spectrum of $F^{\times}\backslash\mathbb{A}_F^{\times}$, obtaining
\begin{equation}\label{2.12}
F(y)=\frac{1}{2\pi\Vol(F^{\times}\backslash\mathbb{A}_F^{(1)})}\sum_{\xi\in \widehat{C}_F}\int_{\mathbb{R}}\int_{F^{\times}\backslash\mathbb{A}_F^{\times}}F(a)\xi(a)|a|^{it}d^{\times}a\, \overline{\xi}(y)|y|^{-it} dt.
\end{equation}

By definition \eqref{2.5}, we have
\begin{align*}
\int_{F^{\times}\backslash\mathbb{A}_F^{\times}}F(a)\xi(a)|a|^{it}d^{\times}a=\Psi(W(\cdot,s),\xi|\cdot|^{r+it}).
\end{align*}
Taking $y=I_2$ in \eqref{2.12}, we obtain
\begin{equation}\label{2.14}
\mathcal{G}_2(I_2,s)=F(1)=\frac{1}{\underset{s=1}{\Res}\ \zeta_F(s)}\sum_{\xi\in \widehat{C}_F}\frac{1}{2\pi i}\int_{(r)}\Psi(W(\cdot,s),\xi|\cdot|^{\lambda})d\lambda.
\end{equation}

Proposition \ref{prop2.2} now follows from \eqref{2.11}, \eqref{2.14}, and the Fourier expansion \eqref{eq2.6}. 
\end{proof}

\subsection{Meromorphic Continuation}
Let $|\Re(s)|<1/2$. By \eqref{e2.6} and Cauchy's formula, we obtain 
\begin{multline}\label{3.7}
\frac{1}{2\pi i}\int_{(r)}\Psi(W(\cdot,s),\xi|\cdot|^{\lambda})d\lambda=\frac{1}{2\pi i}\int_{(0)}\widetilde{\Psi}(W(\cdot,s),\xi|\cdot|^{\lambda})d\lambda\\
+\mathbf{1}_{\xi=\overline{\mu}}\underset{\lambda=1/2-s}{\Res}\,\widetilde{\Psi}(W(\cdot,s),\xi|\cdot|^{\lambda})+\mathbf{1}_{\xi=\mu\overline{\omega}}\underset{\lambda=1/2+s}{\Res}\,\widetilde{\Psi}(W(\cdot,s),\xi|\cdot|^{\lambda}).
\end{multline}

The functions $\underset{\lambda=1/2\pm s}{\Res}\,\widetilde{\Psi}(W(\cdot,s),\xi|\cdot|^{\lambda})$ are meromorphic.  
\begin{lemma}\label{lemma3.3}
We have the following identities of meromorphic functions:
\begin{equation}\label{f3.8}
\underset{\lambda=1/2-s}{\Res}\,\widetilde{\Psi}(W(\cdot,s),\overline{\mu}|\cdot|^{\lambda})=h^{\Diamond}(w,s)\cdot \underset{\lambda=1}{\Res}\,\Lambda_F(\lambda),
\end{equation}
and
\begin{equation}\label{f3.9}
\underset{\lambda=1/2+s}{\Res}\,\widetilde{\Psi}(W(\cdot,s),\mu\overline{\omega}|\cdot|^{\lambda})=h(w,s)\cdot \underset{\lambda=1}{\Res}\,\Lambda_F(\lambda).
\end{equation}
\end{lemma}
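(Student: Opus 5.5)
We compute $\Psi(W(\cdot,s),\xi|\cdot|^{\lambda})$ explicitly from the Godement form \eqref{2.2} in the region of absolute convergence. The residues then appear as boundary terms of a Tate-type integral, and these boundary terms are exactly values of $h$ and $h^{\Diamond}$ at $w$. Both sides of \eqref{f3.8} and \eqref{f3.9} are meromorphic in $s$, so it suffices to verify the identities for $s$ in an open set and with $\Phi$ a Schwartz--Bruhat function; we then conclude by analytic continuation.

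\textbf{Unfolding.} Insert \eqref{2.2} at $g=\operatorname{diag}(y,1)$, where $(t,ct)g=(ty,ct)$. This gives
\[
W(\operatorname{diag}(y,1),s)=\mu(y)|y|^{1/2+s}\int_{\mathbb{A}_F^{\times}}\widehat{\Phi}_2(ty,t^{-1})\,\mu^2\overline{\omega}(t)|t|^{2s}\,d^{\times}t,
\]
where $\widehat{\Phi}_2$ is the partial Fourier transform in the second variable. Substitute into \eqref{2.5} and change variables $y\mapsto yt^{-1}$. The double integral then factors as a product of two Tate integrals:
\[
\Psi(W(\cdot,s),\xi|\cdot|^{\lambda})=\int\!\!\int \widehat{\Phi}_2(y,t^{-1})\,\mu\xi(y)|y|^{1/2+s+\lambda}\,\mu\overline{\omega}\,\overline{\xi}(t)|t|^{s-1/2-\lambda}\,d^{\times}y\,d^{\times}t,
\]
a $\mathrm{GL}_1\times\mathrm{GL}_1$ Tate integral in $(y,t^{-1})$. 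This is consistent with \eqref{e2.6}.

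\textbf{Locating the poles.} Split each variable at $|\cdot|=1$ and apply Poisson summation in the usual Tate manner in the $t^{-1}$ variable. The $t^{-1}$-integral has character $\overline{\mu}\omega\xi|\cdot|^{1/2+\lambda-s}$. Its pole at $\lambda=1/2+s$ occurs exactly when $\xi=\mu\overline{\omega}$. Its residue is $\Vol(F^\times\backslash\mathbb{A}_F^{(1)})$ times a boundary value: roughly the value at $0$ of the full Fourier transform of $\widehat{\Phi}_2$ in the second variable, which is $\Phi(\cdot,0)$, integrated against $\mu\xi|\cdot|^{1+2s}$ in $y$.

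Now compare with \eqref{e2.2} at $g=w$. There $(0,t)w=(t,0)$ and $\det w=-1$, so
\[
h(w,s)=\mu(-1)\int\Phi(t,0)\,\mu^2\overline{\omega}(t)|t|^{1+2s}\,d^{\times}t.
\]
With $\xi=\mu\overline{\omega}$ we have $\mu\xi=\mu^2\overline{\omega}$, so the residue matches $h(w,s)$. The normalizing constant is $\Vol=\Res\zeta_F$ times the archimedean and measure factors, which together give $\Res_{\lambda=1}\Lambda_F(\lambda)$; this is the bookkeeping that produced \eqref{2.10}. This proves \eqref{f3.9}.

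\textbf{The other residue.} The pole at $\lambda=1/2-s$ comes from the $y$-integral when $\xi=\overline{\mu}$. Its residue is the $y$-boundary term
\[
\int\widehat{\Phi}_2(0,t^{-1})\,|t|^{-1+2s}\cdots\,d^{\times}t.
\]
To identify this with $h^{\Diamond}(w,s)$, compute $h^{\Diamond}(w,s)=\int_{N}h(wuw,s)\,du$ with $wuw$ lower unipotent. From \eqref{e2.2}, $(0,t)\,wuw=(0,t)\begin{pmatrix}1&0\\ u&1\end{pmatrix}=(tu,t)$. Integrating $du$ and substituting $u\mapsto u/t$ produces exactly $|t|^{-1}\widehat{\Phi}_1$-type data, namely the Fourier transform in the first variable evaluated at $0$. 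After reconciling the transform variables via the symmetry of the $\mathrm{GL}_1\times\mathrm{GL}_1$ integral, this matches the residue term.

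\textbf{Main obstacle.} The main difficulty is the careful bookkeeping of signs ($\mu(-1)$, $\overline{\psi}$ versus $\psi$), of which partial Fourier transform appears, and of the measure constants. These must combine into exactly $\Res_{\lambda=1}\Lambda_F(\lambda)$, with the self-dual measures and $\zeta_{F_v}(1)$ normalizations of \S\ref{notation} entering. If the direct matching of transforms in \eqref{f3.8} proves awkward, I would instead derive \eqref{f3.8} from \eqref{f3.9}. The route is to apply the functional equation of the Eisenstein series (intertwining $h\mapsto h^{\Diamond}$, $s\mapsto -s$), which preserves $W$ up to the corresponding change of induced data. Under this the two residues are swapped and $h(w,\cdot)$ becomes $h^{\Diamond}(w,\cdot)$.
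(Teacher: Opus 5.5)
Your argument for \eqref{f3.9} is fine and is essentially the paper's. The pole at $\lambda=1/2+s$ comes from the $t^{-1}$-integral, and Fourier inversion turns its boundary term into $\Phi(y,0)$. For $\Re(s)>0$ the residue and $h(w,s)$ are both convergent integrals that agree, with $\mu(-1)=1$ because $\mu$ is trivial on $F^{\times}$.

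The gap is in \eqref{f3.8}. The pole at $\lambda=1/2-s$ is the pole at exponent $1$ of $\int\widehat{\Phi}_2(y,t^{-1})|y|^{1/2+s+\lambda}d^{\times}y$. So its residue involves $\int_{\mathbb{A}_F}\widehat{\Phi}_2(b,t^{-1})\,db$, not $\widehat{\Phi}_2(0,t^{-1})$. The latter is the boundary term of the pole at exponent $0$, i.e.\ at $\lambda=-1/2-s$. With your expression no reconciliation is possible: $\widehat{\Phi}_2(0,\cdot)$ depends only on $\Phi(0,\cdot)$, whereas $h^{\Diamond}(w,s)$ depends on $G(c):=\int_{\mathbb{A}_F}\Phi(b,c)\,db$.

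Now use the correct boundary term and put $t'=t^{-1}$. Up to the residue constant, the residue is $\int\widehat{G}(t')\,\overline{\mu}^2\omega(t')|t'|^{1-2s}d^{\times}t'$, where $\widehat{G}(t')=\int G(c)\overline{\psi}(ct')\,dc$. This integral converges only for $\Re(s)<0$. On the other side, $h^{\Diamond}(w,s)=\int G(t)\,\mu^2\overline{\omega}(t)|t|^{2s}d^{\times}t$ converges only for $\Re(s)>1/2$. So there is no region in which both sides are convergent integrals that can be matched directly, despite your opening reduction to ``an open set''.

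The missing ingredient is Tate's global functional equation in the $t$-variable, applied to the Tate integral for $\Lambda(1-2s,\overline{\mu}^2\omega)$. This is exactly how the paper passes from \eqref{e3.10} to \eqref{e3.11}. A ``symmetry of the $\mathrm{GL}_1\times\mathrm{GL}_1$ integral'' does not supply it. This step, not sign or measure bookkeeping, is the real obstacle.

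Your fallback via $E(g;h)=E(g;h^{\Diamond})$ is viable in outline. View $h^{\Diamond}$ as a section of $\overline{\mu}\omega|\cdot|^{-s}\boxplus\mu|\cdot|^{s}$, with data $(\overline{\mu}\omega,-s)$. Applying \eqref{f3.9} to it does give \eqref{f3.8}, since the Whittaker function is unchanged. However, you have proved \eqref{f3.9} only for sections of the form \eqref{e2.2} attached to a Schwartz--Bruhat $\Phi$, and $h^{\Diamond}$ is not presented in that form. You would need one of the following:
\begin{itemize}
\item that $h^{\Diamond}(\cdot,s)$ is the Godement section of a suitable two-variable Fourier transform of $\Phi$ for the flipped data, which again rests on Poisson summation; or
\item an extension of \eqref{f3.9} to arbitrary sections by linearity (the paper's ``varying $\Phi$'').
\end{itemize}
Either way, this route relocates the functional-equation input rather than avoiding it. It also additionally invokes the functional equation of the Eisenstein series, which the paper's direct argument does not need.
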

\begin{proof}
Suppose that $\Re(\lambda)>1/2+|\Re(s)|$ and that $h(\cdot,s)$ is the Godement section \eqref{2.2}.

By a change of variables, the function $\Psi(W(\cdot,s),\xi|\cdot|^{\lambda})$ is represented by 
\begin{align*}
\int_{\mathbb{A}_F^{\times}}
\int_{\mathbb{A}_F^{\times}}\int_{\mathbb{A}_F}
\Phi(y,c)\overline{\psi}(ct)dc
\xi\overline{\mu}\omega(t)|t|^{1/2-s+\lambda}d^{\times}t\xi\mu(y)|y|^{1/2+s+\lambda}d^{\times}y.
\end{align*}

Consequently, when $\Re(s)<0$, Tate's thesis gives
\begin{equation}\label{e3.10}
\underset{\lambda=\frac{1}{2}-s}{\Res}\,\widetilde{\Psi}(W(\cdot,s),\overline{\mu}|\cdot|^{\lambda})=\beta
\int_{\mathbb{A}_F^{\times}}\int_{\mathbb{A}_F}\int_{\mathbb{A}_F}
\Phi(b,c)db\overline{\psi}(ct)dc
\overline{\mu}^2\omega(t)|t|^{1-2s}d^{\times}t,
\end{equation}
where $\beta:=\underset{\lambda=1}{\Res}\,\Lambda_F(\lambda)$.  

The right-hand side of \eqref{e3.10} is a Tate integral representing $\Lambda(1-2s,\overline{\mu}^2\omega)$. Hence, by meromorphic continuation and the functional equation, when $\Re(s)\gg 1$ we have
\begin{align*}
\underset{\lambda=1/2-s}{\Res}\,\widetilde{\Psi}(W(\cdot,s),\mu|\cdot|^{\lambda})=\underset{\lambda=1}{\Res}\,\Lambda_F(\lambda)\int_{\mathbb{A}_F^{\times}}\int_{\mathbb{A}_F}\Phi(b,t)db
\mu^2\overline{\omega}(t)|t|^{2s}d^{\times}t,
\end{align*}
which can be rewritten as
\begin{multline}\label{e3.11}
\underset{\lambda=1/2-s}{\Res}\,\widetilde{\Psi}(W(\cdot,s),\mu|\cdot|^{\lambda})=\int_{\mathbb{A}_F^{\times}}\int_{\mathbb{A}_F}\Phi\left((0,t)w\begin{pmatrix}
1 & b\\
& 1
\end{pmatrix}w\right)db\\
\mu^2\overline{\omega}(t)|t|^{1+2s}d^{\times}t\, \underset{\lambda=1}{\Res}\,\Lambda_F(\lambda).
\end{multline}

Therefore, \eqref{f3.8} follows from \eqref{e2.2}, \eqref{e3.11}, and meromorphic continuation. Varying $\Phi$, we obtain \eqref{f3.8} for an arbitrary section $h(\cdot,s)$. The proof of \eqref{f3.9} is identical.
\end{proof}

\subsection{Proof of Theorem \ref{thm3.1}}
Theorem \ref{thm3.1} follows readily from Proposition \ref{prop2.2}, \eqref{3.7}, and Lemma \ref{lemma3.3}.

\section{Explicit Arithmetic and Automorphic Data}\label{sec3}
\subsection{CM Fields}
Let $F$ be a totally real field and let $E/F$ be a CM extension with CM type $\boldsymbol{\Phi}$. Let $\eta$ be the quadratic character associated with $E/F$. 

Recall that $\widehat{\mathcal{C}}_{E/F}$ is the Pontryagin dual of $\mathcal{C}_{E/F}=E^{\times}\mathbb{A}_F^{\times}\backslash\mathbb{A}_E^{\times}$. Let $\widehat{\mathcal{C}}_{E/F}^{\text{un}}$ be the subgroup of $\widehat{\mathcal{C}}_{E/F}$ consisting of everywhere unramified characters. Let  
\begin{align*}
\mathrm{Cl}_{E/F}:=\mathrm{Cl}_{E}/\Im(\mathrm{Cl}_{F})
\end{align*}
be the relative ideal class group.

\subsection{Construction of Relevant Data}\label{sect3.1}
Throughout this paper, we assume that every prime of $F$ above $2$ is unramified in $E$. This mild hypothesis merely removes dyadic exceptional cases and allows the local formulas to be stated uniformly; it does not conceal any essential analytic difficulty. Under this assumption, all finite ramification in $E/F$ is tame, and the construction of canonical Hecke characters takes a particularly simple form. 
 
\subsubsection{Canonical Hecke Characters}
Let $k\in \mathbb{Z}_{\geq 0}$. Let $\chi$ be a canonical Hecke character of $E$ of type $(2k+1)\boldsymbol{\Phi}$, namely an algebraic Hecke character satisfying the following three conditions:
\begin{enumerate}
\item[(a).] $\chi$ is ramified precisely at the primes dividing the relative different $\mathfrak{D}_{E/F}$; 
\item[(b).] $\chi(\overline{\mathfrak{A}})=\overline{\chi(\mathfrak{A})}$ for all ideals $\mathfrak{A}$ of $E$ prime to $\mathfrak{D}_{E/F}$;
\item[(c).] $\chi(\alpha\mathcal{O}_E)=\pm\prod_{\sigma\in \boldsymbol{\Phi}}\sigma(\alpha)^{2k+1}$ for $\alpha\in E^{\times}$ prime to $\mathfrak{D}_{E/F}$. 
\end{enumerate} 

Let $\chi^{\mathrm{un}}:=\chi|\cdot|_{\mathbb{A}_E}^{-k-1/2}$ be the unitary normalization of $\chi$. Then for $v\in \Sigma_{F,\infty}$, we have $E_v\simeq \mathbb{C}$ via the embedding $\sigma_v\in \boldsymbol{\Phi}$, and 
\begin{equation}\label{e3.1}
\chi_v^{\mathrm{un}}(z)=\left(\frac{z}{|z|}\right)^{2k+1},\quad z\in \mathbb{C}^{\times}. 
\end{equation}
If instead we use the conjugate embedding, then $\chi_v^{\mathrm{un}}(z)$ has the same form as in \eqref{e3.1}, with $2k+1$ replaced by $-(2k+1)$. 

By \cite[Proposition 1]{Roh82}, condition (b) characterizing $\chi$ is equivalent to 
\begin{equation}\label{e3.2}
\chi^{\mathrm{un}}|_{\mathbb{A}_F^{\times}}=\eta. 
\end{equation}
Moreover, since every prime of $F$ above $2$ is unramified in $E$, the relative different $\mathfrak{D}_{E/F}$ is squarefree. Hence condition (a) is equivalent to the assertion that the arithmetic conductor of $\chi^{\mathrm{un}}$ is precisely $\mathfrak{D}_{E/F}$. 

Let $X_{E/F}^{\mathrm{un}}(k,\boldsymbol{\Phi})$ be the set of unitary canonical Hecke characters of $E$ of type $(2k+1)\boldsymbol{\Phi}$. This set was determined by Rohrlich \cite{Roh82} for $k=0$, and the same argument applies for general $k$. In particular, for any fixed $\chi^*\in X_{E/F}^{\mathrm{un}}(k,\boldsymbol{\Phi})$, we have
\begin{align*}
X_{E/F}^{\mathrm{un}}(k,\boldsymbol{\Phi})=\Big\{\chi^*\chi:\ \chi\in \widehat{\mathrm{Cl}}_{E/F}\Big\}.
\end{align*}
Here $\widehat{\mathrm{Cl}}_{E/F}$ is the Pontryagin dual of $\mathrm{Cl}_{E/F}$.

\subsubsection{Automorphic Data}\label{sec4.2.2}
Let $\omega=\eta$, and let $\mu$ be a unitary quadratic Hecke character of $F^{\times}\backslash\mathbb{A}_F^{\times}$. Suppose $\mu_{\infty}(-1)=(-1)^k$, and that the arithmetic conductor $\mathfrak{f}$ of $\mu$ is prime to $2\mathfrak{D}_E$. 

We consider the Eisenstein series $E(\cdot,s)$ associated with the induced representation $\mu\boxplus \overline{\mu}\omega$, constructed as in \eqref{2.1}, with the section $h(\cdot,s)$ constructed in the next subsection.

\subsubsection{Arithmetic Data}
Let $\mathrm{St}(E/F)$ denote the Steinitz class of $\mathcal{O}_E$ over $\mathcal{O}_F$. Let $\tau\in \mathcal{O}_F$ be such that
\begin{equation}\label{4.3}
E=F(\sqrt{\tau}),\quad (\tau)=\mathfrak{D}_{E/F}\mathfrak{a}^2
\end{equation}
for some integral ideal $\mathfrak{a}\in \mathrm{St}(E/F)^{-1}$ satisfying $\mathfrak{a}+\mathfrak{f}\mathfrak{D}_{E}=\mathcal{O}_F$. We henceforth fix such a choice of $\tau$ throughout the paper. 

\subsection{Construction of the Section}\label{sec3.1.3}
For simplicity, write $\eta=\otimes_{v\in \Sigma_F}'\eta_v$. We construct $h(\cdot,s)=\otimes_{v\in \Sigma_F}'h_v(\cdot,s)$ as the Godement section \eqref{e2.2} associated with a suitable test function $\Phi(\cdot,\cdot)=\otimes_{v\in \Sigma_F}\Phi_v(\cdot,\cdot)$, namely
\begin{equation}\label{f3.3}
h_v\left(g_v
,s\right):=\mu_v(\det g_v)|\det g_v|_v^{1/2+s}
\int_{F_v^{\times}}
\Phi_v((0,t_v)g_v)\eta_v(t_v)|t_v|_v^{1+2s}d^{\times}t_v.
\end{equation} 

For $v\in \Sigma_{F}$ we define $\Phi_v(\cdot,\cdot)$ as follows.  
\begin{itemize}
\item at $v\in \Sigma_{F,\infty}$, let
\begin{align*}
\Phi_v(b_v,a_v):=(a_v-b_v\sqrt{\tau_v})^{2k+1}e^{-2\pi(a_v^2+b_v^2|\tau_v|_v)},\quad a_v, b_v\in F_v\simeq \mathbb{R}.
\end{align*}
\item at $v\in \Sigma_{F,\fin}$ with $v\mid \mathfrak{D}_{E}$, let
\begin{align*}
\Phi_v(b_v,a_v)=q_v^{d_{E_v}/2}\eta_v(a_v)\mathbf{1}_{\mathcal{O}_v^{\times}}(a_v)\mathbf{1}_{\mathcal{O}_v}(b_v),\quad a_v, b_v\in F_v. 
\end{align*}

\item at $v\in \Sigma_{F,\fin}$ with $v\mid \mathfrak{f}$, let 
\begin{align*}
\Phi_v(b_v,a_v)=\mu_v(a_v^2-b_v^2\tau_v)\mathbf{1}_{\mathcal{O}_v}(a_v)\mathbf{1}_{\mathcal{O}_v}(b_v)\mathbf{1}_{\mathcal{O}_v^{\times}}(a_v^2-b_v^2\tau_v),\quad a_v, b_v\in F_v. 
\end{align*}

\item at $v\in \Sigma_{F,\fin}$ with $v\nmid \mathfrak{f}\mathfrak{D}_{E}$, let $\mathfrak{a}_v=\mathfrak{p}_v^{e_v(\mathfrak{a})}$ and set 
\begin{align*}
\Phi_v(b_v,a_v)=q_v^{d_{E_v}/2}\mathbf{1}_{\mathcal{O}_v}(a_v)\mathbf{1}_{\mathfrak{a}_v^{-1}}(b_v),\quad a_v, b_v\in F_v. 
\end{align*}
\end{itemize}

\section{Explicit Formula for $Z(s,\Phi,\widetilde{\mu}\chi^*\chi)$}\label{sect5}
Let $\mu$ and $\Phi$ be constructed as in \textsection\ref{sec3}. The main result of this section is the following explicit formula.

\begin{prop}\label{prop3.2}
Let $\chi^*\in X_{E/F}^{\mathrm{un}}(k,\boldsymbol{\Phi})$ and $\chi\in \widehat{\mathcal{C}}_{E/F}$. Then, for $s\in \mathbb{C}$, we have an identity of meromorphic functions:
\begin{equation}\label{e3.3}
Z(s,\Phi,\widetilde{\mu}\chi^*\chi)=\frac{\pi^{d}\Gamma_{\mathbb{C}}(s+k+1)^{d}}{2^{d}}L(1/2+s,\widetilde{\mu}\chi^*\chi)\mathbf{1}_{\chi\in \widehat{\mathcal{C}}_{E/F}^{\text{un}}}.
\end{equation}
\end{prop}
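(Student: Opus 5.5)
The plan is to factor the Tate integral into local integrals and evaluate each one, using the fact that $\Phi=\otimes_v\Phi_v$ and $\widetilde{\mu}\chi^*\chi=\otimes_w(\widetilde{\mu}\chi^*\chi)_w$. For $\Re(s)\gg1$ we have
\[
Z(s,\Phi,\widetilde{\mu}\chi^*\chi)=\prod_{v\in\Sigma_F}Z_v(s),\qquad Z_v(s):=\int_{E_v^{\times}}\Phi_v((0,1)\iota(x_v))\,\widetilde{\mu}_v\chi^*_v\chi_v(x_v)\,|x_v|_{E_v}^{1/2+s}\,d^{\times}x_v,
\]
where $E_v=E\otimes_FF_v$. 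For $x_v=a+b\theta$ we have $(0,1)\iota(x_v)=(b,a)$, so $\Phi_v((0,1)\iota(x_v))=\Phi_v(b,a)$ is a function on $E_v$. In each case I would identify this function, up to a constant, with a standard Tate function on $E_v$: the characteristic function of a lattice, a Gaussian-type function, or a character times the characteristic function of units. I then evaluate $Z_v$ by Tate's local computation and check that the product of the constants is $\pi^d/2^d$ times the Gamma factors. Meromorphic continuation then gives the identity for all $s$.

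\textbf{Archimedean places.} Here $E_v\simeq\mathbb{C}$ via $\sigma_v\in\boldsymbol{\Phi}$, with $\theta\mapsto\sqrt{\tau_v}=i|\tau_v|^{1/2}$. Then $a-b\sqrt{\tau_v}$ is the complex conjugate $\bar z$ of $z=a+b\sqrt{\tau_v}$, and $a^2+b^2|\tau_v|=|z|^2$. Thus $\Phi_v=\bar z^{2k+1}e^{-2\pi|z|^2}$. Since $\mu_v$ is quadratic with $\mu_v(-1)=(-1)^k$, $\widetilde{\mu}_v=\mu_v\circ|\cdot|^2$ is trivial on $\mathbb{C}^{\times}$. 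Combined with \eqref{e3.1}, the angular integral picks out exactly the matching weight $2k+1$. The radial integral is $\int_0^\infty r^{2k+1}e^{-2\pi r^2}r^{1+2s}\,dr/r$, which up to the measure normalization equals $(2\pi)^{-(s+k+1)}\Gamma(s+k+1)$. With $d^{\times}z$ normalized as in \S\ref{notation} (so that $d^{\times}z=2\,dr\,d\theta/r$), I expect $Z_v=\tfrac{\pi}{2}\Gamma_{\mathbb{C}}(s+k+1)$. Pinning down this constant is bookkeeping, and I would check it against the $k=0$ Gaussian case.

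\textbf{Finite unramified places with $v\nmid\mathfrak{f}\mathfrak{D}_E$.} Since $(\tau)=\mathfrak{D}_{E/F}\mathfrak{a}^2$ and $v\nmid\mathfrak{D}_{E/F}$, the lattice $\mathcal{O}_v\oplus\mathfrak{a}_v^{-1}\theta$ equals $\mathcal{O}_{E_v}$ (as $\theta/\varpi^{e_v(\mathfrak{a})}$ generates). So $\Phi_v=q_v^{d_{E_v}/2}\mathbf{1}_{\mathcal{O}_{E_v}}$. At such places $\widetilde{\mu}_v\chi^*_v$ is unramified; this uses condition (a), which says $\chi^*$ is ramified only at $\mathfrak{D}_{E/F}$. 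Then $Z_v$ equals $q_v^{d_{E_v}/2}\mathrm{Vol}(\mathcal{O}_{E_v}^{\times})\,L_v(1/2+s,\widetilde{\mu}\chi^*\chi)$ if $\chi_v$ is unramified, and vanishes otherwise. Here $\mathrm{Vol}(\mathcal{O}_{E_v}^\times)=q_v^{-d_{E_v}/2}$ (a product over the places $w\mid v$), so the normalizing factor cancels it exactly.

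\textbf{Places $v\mid\mathfrak{f}$.} Here $v$ is unramified in $E$ and $\tau_v$ is a unit (since $\mathfrak{a}$ is coprime to $\mathfrak{f}$). So $\mathcal{O}_v\oplus\mathcal{O}_v\theta=\mathcal{O}_{E_v}$, and $\Phi_v=\widetilde{\mu}_v^{-1}\mathbf{1}_{\mathcal{O}_{E_v}^{\times}}$ since $\mu_v$ is quadratic. The integral then becomes $\int_{\mathcal{O}_{E_v}^{\times}}\chi^*_v\chi_v$. This equals $\mathrm{Vol}=1$ exactly when $\chi_v$ is unramified, where $\mathfrak{D}_E$ and $d_{E_v}=0$ are used, and vanishes otherwise. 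I also need $\widetilde{\mu}_v=\mu_v\circ\mathrm{Nr}$ to be ramified here. Since $v$ is unramified in $E$ and $\mu_v$ is ramified quadratic, this holds, so $L_v(1/2+s,\widetilde{\mu}\chi^*\chi)=1$, consistent with the result.

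\textbf{Places $v\mid\mathfrak{D}_E$.} This is the main obstacle. I will split into two subcases.

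If $v$ is ramified in $E$, then $\tau_v$ is a uniformizer times a unit, so $\mathcal{O}_v\oplus\mathcal{O}_v\theta=\mathcal{O}_{E_v}$. The function $\eta_v(a)\mathbf{1}_{\mathcal{O}_v^\times}(a)\mathbf{1}_{\mathcal{O}_v}(b)$ is supported on units of $\mathcal{O}_{E_v}$ and depends only on $x\bmod\theta$, i.e. on the residue in $k_v^{\times}$. Now $\chi^*_v$ has conductor exactly $\mathfrak{P}_w$, and its restriction to $\mathcal{O}_v^\times$ is $\eta_v$ by \eqref{e3.2}. On $\mathcal{O}_{E_v}^\times/(1+\mathfrak{P}_w)\simeq k_v^{\times}$, the character $\chi^*_v$ therefore coincides with the quadratic character $\eta_v$. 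Hence $\int\eta_v(a)\chi^*_v\chi_v$ is nonzero exactly when $\chi_v$ is trivial on $\mathcal{O}_{E_v}^{\times}$, in which case the integrand is $\eta_v^2=1$. This gives $q_v^{d_{E_v}/2}\mathrm{Vol}(\mathcal{O}_{E_v}^\times)=1$ and $L_v=1$.

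If $v\mid\mathfrak{D}_F$ but $v$ is unramified in $E$, the function is not adapted to the lattice. In that case I would reduce to the previous pattern using the unramifiedness of $\eta_v$, and I would re-check the definition carefully.

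Multiplying all local factors yields \eqref{e3.3}, with $\mathbf{1}_{\chi\in\widehat{\mathcal C}^{\mathrm{un}}_{E/F}}$ arising from the vanishing of the local integrals whenever $\chi_v$ is ramified.
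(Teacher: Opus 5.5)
Your route is the paper's: factor $Z(s,\Phi,\widetilde{\mu}\chi^*\chi)$ into local Tate integrals and evaluate them place by place. Your arguments at $v\mid\mathfrak{f}$ and at places ramified in $E/F$ (where $\chi_v^*$ is determined on $\mathcal{O}_{E_v}^{\times}/(1+\mathfrak{P}_w)\simeq k_v^{\times}$ by its restriction $\eta_v$) coincide with the paper's.

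The gap is the case you leave open, $v\mid\mathfrak{D}_F$ with $v$ unramified in $E/F$, and the reduction you propose cannot close it. At such $v$ the element $\tau_v$ is a unit, since $\mathfrak{a}$ is prime to $\mathfrak{D}_E$. Also $\eta_v$ is trivial on $\mathcal{O}_v^{\times}$. So the literal test function of \S\ref{sec3.1.3} is $q_v^{d_{E_v}/2}\mathbf{1}_{\mathcal{O}_v^{\times}}(a)\mathbf{1}_{\mathcal{O}_v}(b)$, which is neither a lattice indicator nor a character times $\mathbf{1}_{\mathcal{O}_{E_v}^{\times}}$. For $v$ odd and inert, its support is the preimage in $\mathcal{O}_{E_v}^{\times}$ of $\{\bar a+\bar b\bar\theta:\ \bar a\neq 0\}\subset k_{E_v}^{\times}$. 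Hence for unramified $\chi_v$ one gets $Z_v=q_v/(q_v+1)$, independent of $s$, instead of $L_v(1/2+s,\widetilde{\mu}_v\chi_v^*\chi_v)$. Under the literal definition the identity therefore fails at these places.

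The case split in \S\ref{sec3.1.3} has to be read with $\mathfrak{D}_{E/F}$ in place of $\mathfrak{D}_E$. These $v$ then receive $q_v^{d_{E_v}/2}\mathbf{1}_{\mathcal{O}_v}(a)\mathbf{1}_{\mathfrak{a}_v^{-1}}(b)$ and are handled by your lattice argument. This is the reading under which the paper's other local computations assemble into $L^{(\mathfrak{f})}(1+2s,\eta)$ in Lemma~\ref{lem5.1} and into $L(2s,\eta)$ in \eqref{6.26}. The paper's own proof passes over the point by asserting $\tau_v\in\mathfrak{p}_v$ for every $v\mid\mathfrak{D}_E$, which is true only when $v$ ramifies in $E/F$.

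Two further steps need repair.

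First, your claim that $\mathcal{O}_v\oplus\mathfrak{a}_v^{-1}\theta=\mathcal{O}_{E_v}$ at the remaining places is false for every $v\mid 2$; the paper's proof makes the same claim. Such $v$ is unramified in $E$, so $\mathcal{O}_{E_v}$ has unit relative discriminant. By contrast, $\mathcal{O}_v\oplus\mathcal{O}_v\theta'$ with $\theta'^2=u\in\mathcal{O}_v^{\times}$ has discriminant $4u$. Already for $F=\mathbb{Q}$, $E=\mathbb{Q}(\sqrt{-7})$, $\tau=-7$, one has $\mathbb{Z}_2[\sqrt{-7}]=\{(x,y)\in\mathbb{Z}_2^2:\ x\equiv y \bmod 2\}$ inside $E_2\simeq\mathbb{Q}_2\times\mathbb{Q}_2$. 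For an unramified character with components $\varrho_1,\varrho_2$, the local integral equals $L_2(1/2+s,\cdot)\,(1-X-Y+2XY)$, where $X=\varrho_1(2)2^{-1/2-s}$ and $Y=\varrho_2(2)2^{-1/2-s}$. This factor depends on $\chi$, so \eqref{e3.3} holds at dyadic places only after replacing $\Phi_v$ by $q_v^{d_{E_v}/2}\mathbf{1}_{\mathcal{O}_{E_v}}(a+b\theta)$.

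Second, the archimedean constant cannot be deferred, since it is part of the statement. With $d^{\times}z=2\,dr\,d\theta/r$, which is what you cite and what the self-dual convention of \S\ref{notation} gives, the integral is $\pi\Gamma_{\mathbb{C}}(s+k+1)$. The value $\frac{\pi}{2}\Gamma_{\mathbb{C}}(s+k+1)$ required by \eqref{e3.3} corresponds to $d^{\times}z=dr\,d\theta/r$, which is the normalization the paper's computation \eqref{e4.5} actually uses. You must commit to one normalization, and the stated constant holds only for the latter.
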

\begin{proof}
It suffices to prove the identity for $\Re(s)\gg 1$. By definition,
\begin{equation}\label{e3.4}
Z(s,\Phi,\widetilde{\mu}\chi^*\chi)=\prod_{v\in \Sigma_F}Z_v(s,\Phi_v,\widetilde{\mu}_v\chi_v^*\chi_v),
\end{equation}
where
\begin{align*}
Z_v(s,\Phi_v,\widetilde{\mu}_v\chi_v^*\chi_v):=\int_{E_v^{\times}}
\Phi_v((0,1)\iota(x_v))\widetilde{\mu}_v\chi_v^*\chi_v(x_v)|\mathrm{Nr}_{E_v/F_v}(x_v)|_v^{1/2+s}d^{\times}x_v.
\end{align*}

For $v\in \Sigma_F$, write $x_v=a_v+b_v\sqrt{\tau_v}$. Then
\begin{align*}
\iota(x_v)=\begin{pmatrix}
a_v & b_v\tau_v\\
b_v & a_v
\end{pmatrix}.
\end{align*} 
We now compute the local integrals according to the definition of $\Phi_v(\cdot,\cdot)$ in \textsection\ref{sec3.1.3}.

\begin{itemize}
\item Suppose $v\in \Sigma_{F,\infty}$. Identifying $E_v$ with $\mathbb{C}$ via the embedding $\sigma_v\in \boldsymbol{\Phi}$, we have 
\begin{equation}\label{3.2}
\Phi_v((0,1)\iota(x_v))=\Phi_v(b_v,a_v)=\overline{z}^{2k+1}e^{-2\pi|z|^2}\mathbf{1}_{\Im(z)\geq 0},\quad z=\sigma_v(x_v),
\end{equation}
Here $|\cdot|$ denotes the usual Euclidean norm.

Since $\mu_v$ is quadratic, we have $\mu_v=\sgn^{i}$ for some $i\in \{0,1\}$, and hence
\begin{equation}\label{3.3}
\widetilde{\mu}_v\chi_v^*\chi_v(x_v)=\chi_v^*\chi_v(x_v),\quad x_v\in E_v^{\times}. 
\end{equation}
By definition, $\chi_v^*$ satisfies \eqref{e3.1}. Thus \eqref{3.2} and \eqref{3.3} give
\begin{align*}
Z_v(s,\Phi_v,\widetilde{\mu}_v\chi_v^*\chi_v)=\int_{\mathbb{C}^{\times}}
|z|^{2k+1}e^{-2\pi|z|^2}\chi_v(z)|z|^{1+2s}d^{\times}z.
\end{align*}

In polar coordinates $z=re^{i\theta}$, this becomes
\begin{equation}\label{e4.5}
Z_v(s,\Phi_v,\widetilde{\mu}_v\chi_v^*\chi_v)=\int_0^{2\pi}\int_0^{\infty}e^{-2\pi r^2}\chi_v(z)r^{2k+1+2s}drd\theta.
\end{equation}

Since $E_v/F_v$ is compact, we have $\chi_v(z)=(z/|z|)^m=e^{im\theta}$ for some $m\in \mathbb{Z}$.  Moreover,
\begin{align*}
\int_0^{2\pi} e^{im\theta}d\theta
=
\begin{cases}
2\pi, & m=0,\\
0, & m\in \mathbb Z,\ m\neq 0.
\end{cases}
\end{align*}

It follows from \eqref{e4.5} that
\begin{equation}\label{3.4}
Z_v(s,\Phi_v,\widetilde{\mu}_v\chi_v^*\chi_v)=\frac{\pi\Gamma_{\mathbb{C}}(s+k+1)\mathbf{1}_{m=0}}{2}=\frac{\pi \Gamma_{\mathbb{C}}(s+k+1)\mathbf{1}_{\text{$\chi_v$ unramified}}}{2}.
\end{equation}

\item Suppose $v\in \Sigma_{F,\fin}$ with $v\mid \mathfrak{D}_{E}$. Then
\begin{align*}
\Phi_v((0,1)\iota(x_v))=\Phi_v(b_v,a_v)=q_v^{d_{E_v}/2}\eta_v(a_v)\mathbf{1}_{\mathcal{O}_v^{\times}}(a_v)\mathbf{1}_{\mathcal{O}_v}(b_v). 
\end{align*}
Since $\tau_v$ lies in the maximal ideal of $\mathcal{O}_v$, the element $\sqrt{\tau_v}$ lies in the maximal ideal of $\mathcal{O}_{E_v}$. Thus $x_v=a_v+b_v\sqrt{\tau_v}\in \mathcal{O}_{E_v}^{\times}$ is equivalent to $a_v\in \mathcal{O}_v^{\times}$ and $b_v\in \mathcal{O}_v$. Hence
\begin{equation}\label{3.6}
\Phi_v((0,1)\iota(x_v))=q_v^{d_{E_v}/2}\eta_v(a_v)\mathbf{1}_{\mathcal{O}_{E_v}^{\times}}(x_v).  
\end{equation}
Moreover, $\chi_v^*$ has conductor exponent $1$, and by \eqref{e3.2},
\begin{equation}\label{3.5}
\chi_v^*(a_v+b_v\sqrt{\tau_v})=\chi_v^*(a_v)=\eta_v(a_v). 
\end{equation} 
Since $\mu_v$ is unramified, \eqref{3.6} and \eqref{3.5} imply
\begin{equation}\label{3.8}
Z_v(s,\Phi_v,\widetilde{\mu}_v\chi_v^*\chi_v)=q_v^{d_{E_v}/2}\int_{\mathcal{O}_{E_v}^{\times}}
\chi_v(x_v)d^{\times}x_v=\mathbf{1}_{\text{$\chi_v$ unramified}}.
\end{equation}

\item Suppose $v\in \Sigma_{F,\fin}$ with $v\mid \mathfrak{f}$. Then
\begin{align*}
\Phi_v((0,1)\iota(x_v))=\Phi_v(b_v,a_v)=\widetilde{\mu}_v(x_v)\mathbf{1}_{\mathcal{O}_v}(a_v)\mathbf{1}_{\mathcal{O}_v}(b_v)\mathbf{1}_{\mathcal{O}_v^{\times}}(\mathrm{Nr}_{E_v/F_v}(x_v)).
\end{align*}
Note that $x_v\in \mathcal{O}_{E_v}^{\times}$ is equivalent to $a_v, b_v\in \mathcal{O}_v$ and $\mathrm{Nr}_{E_v/F_v}(x_v)\in \mathcal{O}_v^{\times}$. Since $\chi_v^*$ is unramified, we obtain
\begin{equation}\label{3.9}
Z_v(s,\Phi_v,\widetilde{\mu}_v\chi_v^*\chi_v)=q_v^{d_{E_v}/2}\int_{\mathcal{O}_{E_v}^{\times}}
\chi_v(x_v)d^{\times}x_v=\mathbf{1}_{\text{$\chi_v$ unramified}}.
\end{equation}

\item Suppose $v\in \Sigma_{F,\fin}$ with $v\nmid \mathfrak{f}\mathfrak{D}_{E}$. Then 
\begin{align*}
\mathcal{O}_{E_v}=\mathcal{O}_v\oplus \mathfrak{a}_v^{-1}\theta_v,\quad \theta_v^2=\tau_v.
\end{align*} 
Therefore, 
\begin{align*}
\Phi_v((0,1)\iota(x_v))=\Phi_v(b_v,a_v)=q_v^{d_{E_v}/2}\mathbf{1}_{\mathcal{O}_v}(a_v)\mathbf{1}_{\mathfrak{a}_v^{-1}}(b_v)=q_v^{d_{E_v}/2}\mathbf{1}_{\mathcal{O}_{E_v}}(x_v). 
\end{align*}
It follows that
\begin{align*}
Z_v(s,\Phi_v,\widetilde{\mu}_v\chi_v^*\chi_v)=q_v^{d_{E_v}/2}\int_{\mathcal{O}_{E_v}-\{0\}}
\widetilde{\mu}_v\chi_v^*\chi_v(x_v)|\mathrm{Nr}_{E_v/F_v}(x_v)|_v^{1/2+s}d^{\times}x_v.
\end{align*}
Since $\widetilde{\mu}_v\chi_v^*$ is unramified, we get
\begin{equation}\label{3.10}
Z_v(s,\Phi_v,\widetilde{\mu}_v\chi_v^*\chi_v)=L_v(1/2+s,\widetilde{\mu}_v\chi_v^*\chi_v)\mathbf{1}_{\text{$\chi_v$ unramified}}. 
\end{equation}
\end{itemize}

Finally, for $v\in \Sigma_{F,\fin}$, if $\widetilde{\mu}_v\chi_v^*\chi_v$ is ramified, then
\begin{equation}\label{3.11}
L_v(1/2+s,\widetilde{\mu}_v\chi_v^*\chi_v)\equiv 1.
\end{equation}
Combining \eqref{e3.4}, \eqref{3.4}, \eqref{3.8}, \eqref{3.9}, \eqref{3.10}, and \eqref{3.11}, we obtain \eqref{e3.3} for $\Re(s)\gg 1$. The general case follows by meromorphic continuation.
\end{proof}

\section{Explicit Formulas of $J_{\mathrm{Const}}(h(\cdot,s))$ and $J_{\mathrm{Res}}(h(\cdot,s))$}\label{sec6}
Let $\mu$ and $\Phi$ be constructed as in \textsection\ref{sec3}. In this section, we explicitly compute the constant terms $h(I_2,s)$ and $h^{\Diamond}(I_2,s)$, as well as the residual terms $h(w,s)$ and $h^{\Diamond}(w,s)$, appearing on the right-hand side of Theorem \ref{thm3.1}.
 
\subsection{The Constant Term $J_{\mathrm{Const}}(h(\cdot,s))$}
\begin{prop}\label{prop6.1}
For $s\in \mathbb{C}$, we have 
\begin{align*}
J_{\mathrm{Const}}(h(\cdot,s))=\frac{\Gamma_{\mathbb{C}}(s+k+1)^{d}D_{E/F}^{1/2}D_F^{1/2}}{2^{d}}L^{(\mathfrak{f})}(1+2s,\eta).
\end{align*}	
Here $L^{(\mathfrak{f})}(1+2s,\eta)$ denotes the partial $L$-function with the Euler factors at primes dividing $\mathfrak{f}$ removed.
\end{prop}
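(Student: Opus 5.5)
The plan is to evaluate both constant terms place by place. By \eqref{f3.3}, $h(\cdot,s)=\otimes_v h_v(\cdot,s)$ factors, so $h(I_2,s)=\prod_v h_v(I_2,s)$. Likewise $h^{\Diamond}(I_2,s)=\prod_v\int_{F_v}h_v(wu_v,s)\,du_v$ for $\Re(s)\gg1$, extended by meromorphic continuation. I expect the whole main term to come from $h(I_2,s)$, and I will try to show $h^{\Diamond}(I_2,s)\equiv 0$ by exhibiting one local factor that vanishes identically.

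\emph{Step 1: $h(I_2,s)$.} Since $(0,t)I_2=(0,t)$, we have $h_v(I_2,s)=\int_{F_v^\times}\Phi_v(0,t)\eta_v(t)|t|_v^{1+2s}d^\times t$. I will evaluate this case by case.
\begin{itemize}
\item At $v\in\Sigma_{F,\infty}$: here $\Phi_v(0,t)=t^{2k+1}e^{-2\pi t^2}$ and $\eta_v=\sgn$. The integrand is even, and
\begin{align*}
2\int_0^\infty t^{2k+2+2s}e^{-2\pi t^2}\frac{dt}{t}=(2\pi)^{-(s+k+1)}\Gamma(s+k+1)=\tfrac12\Gamma_{\mathbb C}(s+k+1).
\end{align*}
\item At $v\mid\mathfrak D_E$: the factor $\eta_v(t)^2=1$ cancels, leaving $q_v^{d_{E_v}/2}\Vol(\mathcal O_v^\times)=q_v^{(d_{E_v}-d_{F_v})/2}$.
\item At $v\mid\mathfrak f$: here $\Phi_v(0,t)=\mu_v(t^2)\mathbf 1_{\mathcal O_v^\times}(t)=\mathbf 1_{\mathcal O_v^\times}(t)$. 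Since $\mathfrak f$ is coprime to $2\mathfrak D_E\supseteq\mathfrak D_F$, we have $d_{F_v}=d_{E_v}=0$, so the factor is $1$. This removes the Euler factor of $\eta$ at $v$.
\item At the remaining $v$: we get $q_v^{(d_{E_v}-d_{F_v})/2}L_v(1+2s,\eta_v)$.
\end{itemize}
Multiplying, $\prod_v q_v^{(d_{E_v}-d_{F_v})/2}=(N\mathfrak D_E/N\mathfrak D_F)^{1/2}=(D_{E/F}D_F^2/D_F)^{1/2}=D_{E/F}^{1/2}D_F^{1/2}$. This uses the conductor–discriminant relation $D_E=D_{E/F}D_F^2$. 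The product is then exactly the claimed expression, with $L^{(\mathfrak f)}(1+2s,\eta)$ arising because the places $v\mid\mathfrak D_E$ carry no $L$-factor, as $\eta_v$ is ramified there.

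\emph{Step 2: $h^{\Diamond}(I_2,s)=0$.} We have $(0,t)wu_b=(t,tb)$, so the local factor is
\begin{align*}
\int_{F_v}\int_{F_v^\times}\Phi_v(t,tb)\eta_v(t)|t|_v^{1+2s}\,d^\times t\,db .
\end{align*}
At a finite place $v\mid\mathfrak D_{E/F}$ this equals $q_v^{d_{E_v}/2}\iint \mathbf 1_{\mathcal O_v}(t)\eta_v(tb)\mathbf 1_{\mathcal O_v^\times}(tb)\eta_v(t)|t|_v^{1+2s}\,d^\times t\,db$. Since $\eta_v(t)\eta_v(tb)=\eta_v(b)$ ($\eta_v$ is quadratic), for fixed $t$ the inner $b$-integral is $\int_{t^{-1}\mathcal O_v^\times}\eta_v(b)\,db$. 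This vanishes because $\eta_v$ is ramified, i.e.\ nontrivial on $\mathcal O_v^\times$. Hence the whole local factor is $0$, and so $h^{\Diamond}(I_2,s)=0$ for $\Re(s)\gg1$, hence identically. The argument requires absolute convergence to interchange the integrals, which holds for $\Re(s)\gg1$.

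\emph{Main obstacle.} The delicate point is Step 2 when $E/F$ is unramified at every finite place, so that there is no $v\mid\mathfrak D_{E/F}$. In that case I would look for vanishing at another place, most plausibly in the archimedean integral $\int_{\mathbb R}(b-\sqrt{\tau_v})^{2k+1}e^{-2\pi t^2(b^2+|\tau_v|)}\,db$ paired with $\sgn(t)\,t^{2k+1}$, or else in a character-sum evaluation at $v\mid\mathfrak f$. Failing that, I would fall back on the running hypotheses on $\tau$ and the family to exclude this case. The bookkeeping of the normalization $q_v^{d_{E_v}/2}$ against the measure $\Vol(\mathcal O_v^\times)=q_v^{-d_{F_v}/2}$ must also be checked carefully so that the discriminants combine to $D_{E/F}^{1/2}D_F^{1/2}$.
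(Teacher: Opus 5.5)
Your argument is the paper's: Lemma~\ref{lem5.1} evaluates $h(I_2,s)$ place by place exactly as you do, and Lemma~\ref{lem5.2} kills $h^{\Diamond}(I_2,s)$ using the same vanishing $\int_{\mathcal{O}_v^{\times}}\eta_v(c)\,dc=0$ at a finite $v\mid\mathfrak{D}_{E/F}$. Your case label $v\mid\mathfrak{D}_E$ is inherited from the paper and must be read as $v\mid\mathfrak{D}_{E/F}$: only then is ``$\eta_v$ ramified there'', and only then is the $L^{(\mathfrak f)}$ bookkeeping right.

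Your obstacle is real, and the paper does not resolve it either; it simply picks such a place. None of your fallbacks can work. When $E/F$ is unramified at every finite place, each local factor of $h^{\Diamond}(I_2,s)$ is a nonzero meromorphic function:
\begin{itemize}
\item at $v\mid\mathfrak f$ it equals $\mu_v(-1)\bigl(\eta_v(\varpi_v)q_v^{-2s}-q_v^{-1}\bigr)L_v(2s,\eta_v)$;
\item at a real place it is, up to a nonvanishing factor, the $\Gamma(2s)/\Gamma(s-k)$ of \eqref{6.12};
\item at every other finite place it is a nonzero multiple of $L_v(2s,\eta_v)$.
\end{itemize}
So the stated identity genuinely fails in that case. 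The condition $\mathfrak{D}_{E/F}\neq\mathcal{O}_F$ has to be added as a hypothesis, as the paper tacitly does here and again in Lemma~\ref{lem6.6}.
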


Proposition \ref{prop6.1} follows immediately from Lemmas \ref{lem5.1} and \ref{lem5.2}.

\begin{lemma}\label{lem5.1}
For $s\in \mathbb{C}$, we have
\begin{equation}\label{5.1}
h(I_2,s)=\frac{\Gamma_{\mathbb{C}}(s+k+1)^{d}D_{E}^{1/2}}{2^{d}D_F^{1/2}}L^{(\mathfrak{f})}(1+2s,\eta). 
\end{equation}
\end{lemma}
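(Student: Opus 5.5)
The section $h(\cdot,s)$ is a factorizable Godement section, so
\begin{align*}
h(I_2,s)=\prod_{v\in\Sigma_F}h_v(I_2,s),\qquad h_v(I_2,s)=\int_{F_v^{\times}}\Phi_v(0,t_v)\eta_v(t_v)|t_v|_v^{1+2s}d^{\times}t_v
\end{align*}
by \eqref{f3.3}, using $\mu_v(\det I_2)=1$. I will compute each local factor with the explicit $\Phi_v$ from \textsection\ref{sec3.1.3} and multiply. It suffices to take $\Re(s)\gg1$; the identity then extends by meromorphic continuation.

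\textbf{Archimedean places.} For $v\in\Sigma_{F,\infty}$ we have $\Phi_v(0,t)=t^{2k+1}e^{-2\pi t^2}$. Since $E_v=\mathbb{C}$, $\eta_v=\sgn$, so $\eta_v(t)t^{2k+1}=|t|^{2k+1}$. Hence
\begin{align*}
h_v(I_2,s)=2\int_0^\infty t^{2k+2+2s}e^{-2\pi t^2}\frac{dt}{t}=(2\pi)^{-(k+1+s)}\Gamma(k+1+s)=\frac{\Gamma_{\mathbb{C}}(s+k+1)}{2},
\end{align*}
which produces the factor $\Gamma_{\mathbb{C}}(s+k+1)^d/2^d$.

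\textbf{Places $v\mid\mathfrak{D}_E$.} Here $\Phi_v(0,t)=q_v^{d_{E_v}/2}\eta_v(t)\mathbf 1_{\mathcal O_v^\times}(t)$. Since $\eta_v^2=1$, the integral is $q_v^{d_{E_v}/2}\Vol(\mathcal O_v^\times)=q_v^{d_{E_v}/2-d_{F_v}/2}$. This covers the ramified places and also the unramified places dividing $\mathfrak{D}_F$; I read $v\mid\mathfrak D_E$ as $v\mid \mathfrak D_E\cap\mathcal O_F$. Note $\eta_v(1+\mathfrak{p}_v)$ has $L_v(1+2s,\eta_v)=1$ at ramified $v$, so omitting the local $L$-factor there is consistent.

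\textbf{Places $v\mid\mathfrak f$.} Here $\Phi_v(0,t)=\mu_v(t^2)\mathbf 1_{\mathcal O_v^\times}(t)=\mathbf 1_{\mathcal O_v^\times}(t)$, since $\mu_v$ is quadratic. Since $v\nmid\mathfrak D_E$, the factor is $q_v^{-d_{F_v}/2}\cdot q_v^{d_{E_v}/2}$ with $d_{E_v}=d_{F_v}=0$, which equals $1$. This is exactly the removal of the Euler factor at $v\mid\mathfrak f$.

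\textbf{Remaining places $v\nmid\mathfrak f\mathfrak D_E$.} Here $\Phi_v(0,t)=q_v^{d_{E_v}/2}\mathbf 1_{\mathcal O_v}(t)$ and $\eta_v$ is unramified. Tate's computation gives
\begin{align*}
q_v^{d_{E_v}/2}\Vol(\mathcal O_v^\times)L_v(1+2s,\eta_v)=L_v(1+2s,\eta_v).
\end{align*}

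\textbf{Assembling.} Multiplying the local factors, the discriminant powers combine to
\begin{align*}
\prod_{v<\infty}q_v^{(d_{E_v}-d_{F_v})/2}=\frac{D_E^{1/2}}{D_F^{1/2}}.
\end{align*}
This is meant with $d_{E_v}$ interpreted as $e_v$ of the norm of the different, i.e.\ $\prod_{w\mid v}q_w^{d_{E_w}}$, so that $\prod_v q_v^{d_{E_v}}=D_E$. The $L$-factors assemble to $L^{(\mathfrak f)}(1+2s,\eta)$, which yields \eqref{5.1}.

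The main obstacle is this normalization bookkeeping: making $q_v^{d_{E_v}/2}$ consistent with $D_E$ at split and inert places, and matching the measure normalization $\Vol(\mathcal O_v^\times)=q_v^{-d_{F_v}/2}$. The $\Gamma$-computation and Tate factors are routine. Consistency with Proposition \ref{prop6.1}, which has $D_E^{1/2}D_F^{-1/2}=D_{E/F}^{1/2}D_F^{1/2}$, confirms the expected constant.
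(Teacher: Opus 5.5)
Your strategy is the paper's: factor $h(I_2,s)$ into local Tate integrals of $\Phi_v(0,t_v)\eta_v(t_v)|t_v|_v^{1+2s}$, evaluate each one, and multiply. The archimedean, ramified, $v\mid\mathfrak f$ and generic evaluations are each correct for the test functions you plug in.

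The problem is the case assignment you chose explicitly. You put the primes $v\mid\mathfrak D_F$ that are \emph{unramified} in $E/F$ into the ``$v\mid\mathfrak D_E$'' case. At such $v$ your computation gives $h_v(I_2,s)=q_v^{(d_{E_v}-d_{F_v})/2}$ with no Euler factor. However, $\eta_v$ is unramified there, so $L_v(1+2s,\eta_v)=(1-\eta_v(\varpi_v)q_v^{-1-2s})^{-1}\neq 1$. Your justification for dropping the local $L$-factor (that it equals $1$) holds only at ramified $v$. The Euler factors therefore assemble to $L^{(\mathfrak f\mathfrak c)}(1+2s,\eta)$, where $\mathfrak c$ is the product of the primes dividing $\mathfrak D_F$ that are unramified in $E$. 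So the final claim that you get $L^{(\mathfrak f)}(1+2s,\eta)$ does not follow whenever $\mathfrak c\neq\mathcal O_F$. Under your reading of $\Phi_v$ the stated identity is actually false.

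The fix is to read the special case in \textsection\ref{sec3.1.3} as ``$v$ ramified in $E/F$'', i.e.\ $v\mid\mathfrak D_{E/F}$, which is how the paper uses it. Proposition~\ref{prop3.2} relies there on $\tau_v\in\mathfrak p_v$ and on $\chi_v^*$ having conductor exponent $1$. At an unramified place the support $a_v\in\mathcal O_v^\times$, $b_v\in\mathcal O_v$ is not $\mathcal O_{E_v}^\times$, so the toric computation would break under your reading. The generic formula \eqref{5.5} also carries the factor $q_v^{(d_{E_v}-d_{F_v})/2}$, which is nontrivial exactly at the unramified places dividing $\mathfrak D_F$.

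With this partition, an unramified $v\mid\mathfrak D_F$ uses $\Phi_v(0,t)=q_v^{d_{E_v}/2}\mathbf 1_{\mathcal O_v}(t)$ and contributes $q_v^{(d_{E_v}-d_{F_v})/2}L_v(1+2s,\eta_v)$. Your ``remaining places'' evaluation must keep this discriminant factor rather than collapsing to $L_v(1+2s,\eta_v)$. The discriminant powers still multiply to $D_E^{1/2}D_F^{-1/2}$, and the Euler factors now give exactly $L^{(\mathfrak f)}(1+2s,\eta)$.
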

\begin{proof}
It suffices to prove the formula for $\Re(s)\gg 1$. By \eqref{e2.2}, we have
\begin{align*}
h(I_2,s)=\prod_{v\in \Sigma_F}h_v(I_2,s),
\end{align*}
where 
\begin{align*}
h_v(I_2,s):=\int_{F_v^{\times}}
\Phi_v((0,t_v))\eta_v(t_v)|t_v|_v^{1+2s}d^{\times}t_v.
\end{align*}

We compute these local integrals using the definition of $\Phi_v(\cdot,\cdot)$ in \textsection\ref{sec3.1.3}.
\begin{itemize}
\item Suppose $v\in \Sigma_{F,\infty}$. Then 
\begin{align*}
h_v(I_2,s)=\int_{F_v^{\times}}
t_v^{2k+1}e^{-2\pi t_v^2}\sgn(t_v)|t_v|_v^{1+2s}d^{\times}t_v=\int_{\mathbb{R}^{\times}}
t^{2k+1}e^{-2\pi t^2}\sgn(t)|t|^{1+2s}d^{\times}t.
\end{align*}
Evaluating the integral gives
\begin{equation}\label{5.2}
h_v(I_2,s)=2^{-1}\Gamma_{\mathbb{C}}(s+k+1).	
\end{equation}

\item Suppose $v\in \Sigma_{F,\fin}$ with $v\mid \mathfrak{D}_{E}$. Since
\begin{align*}
\Phi_v(0,t_v)=\Phi_v(b_v,a_v)=q_v^{d_{E_v}/2}\eta_v(t_v)\mathbf{1}_{\mathcal{O}_v^{\times}}(t_v),
\end{align*}
we obtain
\begin{equation}\label{5.3}
h_v(I_2,s)=q_v^{d_{E_v}/2}\int_{\mathcal{O}_v^{\times}}
d^{\times}t_v=q_v^{(d_{E_v}-d_{F_v})/2}.
\end{equation}

\item Suppose $v\in \Sigma_{F,\fin}$ with $v\mid \mathfrak{f}$. Then $\Phi_v(0,t_v)=\mathbf{1}_{\mathcal{O}_v^{\times}}(t_v)$. Hence
\begin{equation}\label{5.4}
h_v(I_2,s)=\int_{\mathcal{O}_v^{\times}}
\eta_v(t_v)|t_v|_v^{1+2s}d^{\times}t_v=1.
\end{equation}

\item Suppose $v\in \Sigma_{F,\fin}$ with $v\nmid \mathfrak{f}\mathfrak{D}_{E}$. Since
\begin{align*}
\Phi_v(0,t_v)=q_v^{d_{E_v}/2}\mathbf{1}_{\mathcal{O}_v}(t_v), 
\end{align*}
we have
\begin{equation}\label{5.5}
h_v(I_2,s)=q_v^{d_{E_v}/2}\int_{\mathcal{O}_v-\{0\}}\eta_v(t_v)|t_v|_v^{1+2s}d^{\times}t_v=q_v^{(d_{E_v}-d_{F_v})/2}L_v(1+2s,\eta_v).
\end{equation}
\end{itemize}

Combining \eqref{5.2}, \eqref{5.3}, \eqref{5.4}, and \eqref{5.5} gives \eqref{5.1}. The general case follows by meromorphic continuation.
\end{proof}

\begin{lemma}\label{lem5.2}
For $s\in \mathbb{C}$, we have
\begin{equation}\label{5.6}
h^{\Diamond}(I_2,s)\equiv 0.  
\end{equation}
\end{lemma}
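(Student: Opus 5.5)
The plan is to compute $h^{\Diamond}(I_2,s)$ place by place, for $\Re(s)\gg1$, and show that at least one local factor vanishes identically. The identity for all $s$ then follows by meromorphic continuation. Since $h(\cdot,s)=\otimes_v h_v(\cdot,s)$ is the Godement section \eqref{f3.3} and $N(\mathbb A_F)=\prod'_v N(F_v)$, we have
\begin{align*}
h^{\Diamond}(I_2,s)=\prod_{v\in\Sigma_F}h_v^{\Diamond}(I_2,s),\qquad
h_v^{\Diamond}(I_2,s)=\int_{F_v}h_v\Big(w\begin{pmatrix}1&b\\&1\end{pmatrix},s\Big)db,
\end{align*}
with absolute convergence for $\Re(s)\gg1$.

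\textbf{Step 1: general local formula.} We have $(0,t)w\begin{pmatrix}1&b\\&1\end{pmatrix}=(t,tb)$ and $\det(wu)=-1$. Substituting $c=tb$ turns $db$ into $|t|_v^{-1}dc$, so
\begin{align*}
h_v^{\Diamond}(I_2,s)=\mu_v(-1)\int_{F_v^{\times}}\Big(\int_{F_v}\Phi_v(t,c)\,dc\Big)\eta_v(t)|t|_v^{2s}d^{\times}t .
\end{align*}
Thus everything reduces to the partial Fourier transform $\int_{F_v}\Phi_v(t,c)\,dc$ in the second variable, which is the $a$-variable of \textsection\ref{sec3.1.3}.

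\textbf{Step 2: a vanishing place.} Take $v\mid\mathfrak D_{E/F}$, i.e.\ a finite place ramified in $E/F$. There $\Phi_v(t,c)=q_v^{d_{E_v}/2}\eta_v(c)\mathbf 1_{\mathcal O_v^{\times}}(c)\mathbf 1_{\mathcal O_v}(t)$. Since $\eta_v$ is ramified at such $v$, the inner integral $\int_{\mathcal O_v^{\times}}\eta_v(c)\,dc$ vanishes, being the integral of a nontrivial character over a compact group. Hence $h_v^{\Diamond}(I_2,s)=0$, and the whole product vanishes. The remaining local factors are finite for $\Re(s)\gg1$: at the unramified places and at $v\mid\mathfrak f$ they are bounded Tate-type integrals, and at the archimedean places they are Gaussian integrals. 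So no $0\cdot\infty$ issue arises.

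\textbf{Main obstacle.} This argument needs a finite place of $F$ that ramifies in $E$. For a CM extension $E/F$ unramified at all finite places, i.e.\ $\mathfrak D_{E/F}=\mathcal O_F$, the set of $v\mid\mathfrak D_E$ with $\eta_v$ ramified is empty. In that case I would look for vanishing elsewhere.
\begin{itemize}
\item[(i)] \emph{Places $v\mid\mathfrak f$.} Here $\int\Phi_v(t,c)\,dc=\int\mu_v(c^2-t^2\tau_v)\mathbf 1_{\mathcal O_v^{\times}}(c^2-t^2\tau_v)\mathbf 1_{\mathcal O_v}(c)\,dc$. For $t\in\mathfrak p_v$ this equals $\int_{\mathcal O_v^{\times}}\mu_v(c^2)\,dc$, and since $\mu_v$ is quadratic this does not vanish. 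So one must instead integrate the full expression against $\eta_v(t)|t|_v^{2s}$ and test for cancellation, e.g.\ via a Gauss-sum evaluation.
\item[(ii)] \emph{Archimedean places.} The $c$-integral of $(c-t\sqrt{\tau_v})^{2k+1}e^{-2\pi(c^2+t^2|\tau_v|)}$ is an odd function of $t$. Pairing it with $\eta_v=\sgn$, and keeping track of the factors $\mu_v(-1)=(-1)^k$, the parity product is even, so I do not expect vanishing from parity alone.
\end{itemize}
I would therefore first verify (i) carefully. Failing that, I would check whether the conventions in \textsection\ref{sect3.1} implicitly force $\mathfrak D_{E/F}\neq\mathcal O_F$; for instance, condition (a) on canonical characters may be used with $\mathfrak D_{E/F}$ nontrivial. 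If so, Step 2 finishes the proof.
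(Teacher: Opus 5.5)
Your Step 2 is exactly the paper's proof. It picks a finite place $v\mid\mathfrak D_{E/F}$, notes that the inner integral $\int_{\mathcal O_v^{\times}}\eta_v(c)\,dc$ vanishes because $\eta_v$ is ramified, and passes from $\Re(s)\gg1$ to all $s$ by meromorphic continuation. Your extra factor $\mu_v(-1)$, coming from $\det(wu)=-1$, is harmless.

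The paper also takes the existence of such a place for granted, so your worry is well founded, but your fallbacks (i)--(ii) cannot succeed. With this $\Phi$, the factor at $v\mid\mathfrak f$ equals, up to a nonzero constant, $(\eta_v(\varpi_v)q_v^{-2s}-q_v^{-1})L_v(2s,\eta_v)$. The archimedean factor equals, up to a nowhere-vanishing factor, $\Gamma(2s)/\Gamma(s-k)$. Hence if $\mathfrak D_{E/F}=\mathcal O_F$ the lemma would be false, and ramification of $E/F$ at some finite prime has to be read as a tacit standing hypothesis.
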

\begin{proof}
It suffices to prove the claim for $\Re(s)\gg 1$. By the definition of $h^{\Diamond}(\cdot,s)$ in \textsection\ref{sec2.1.2},
\begin{equation}\label{e5.7}
h^{\Diamond}(I_2,s)=\prod_{v\in \Sigma_F}h_v^{\Diamond}(I_2,s),
\end{equation}
where
\begin{align*}
h_v^{\Diamond}(I_2,s):=\int_{F_v}\int_{F_v^{\times}}
\Phi_v(t_v,c_v)\eta_v(t_v)|t_v|_v^{2s}d^{\times}t_vdc_v.
\end{align*}
The product is absolutely convergent in this region.

Choose a finite place $v\mid \mathfrak{D}_{E/F}$. By the definition of $\Phi_v(t_v,c_v)$, the corresponding local factor is
\begin{align*}
h_v^{\Diamond}(I_2,s)
=
q_v^{d_{E_v}/2}\int_{F_v}\int_{F_v^{\times}}
\eta_v(c_v)\mathbf{1}_{\mathcal{O}_v^{\times}}(c_v)\mathbf{1}_{\mathcal{O}_v}(t_v)
\eta_v(t_v)|t_v|_v^{2s}d^{\times}t_vdc_v.
\end{align*}
The $c_v$-integral vanishes, since $\eta_v$ is ramified. Hence $h_v^{\Diamond}(I_2,s)=0$, and therefore $h^{\Diamond}(I_2,s)=0$ for $\Re(s)\gg 1$. The identity for all $s$ follows by meromorphic continuation.
\end{proof}

\subsection{The Residual Term $J_{\mathrm{Res}}(h(\cdot,s))$}
\begin{prop}\label{prop6.2}
Let $\epsilon_{\mu,k}:=\mu_{\infty}(-1)(-1)^{kd}$. Then, for $s\in \mathbb{C}$, we have
\begin{align*}
J_{\mathrm{Res}}(h(\cdot,s))=
\frac{\epsilon_{\mu,k}\Gamma_{\mathbb{C}}(k+1-s)^{d}
D_{E/F}^{1/2-2s}D_F^{1/2-2s}}{2^{(1+2s)d}N_F(\mathfrak{f})^{2s}}L^{(\mathfrak{f})}(1-2s,\eta).
\end{align*}
\end{prop}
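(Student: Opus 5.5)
Following the pattern of Proposition \ref{prop6.1}, I will split $J_{\mathrm{Res}}(h(\cdot,s))=\big[h(w,s)+h^{\Diamond}(w,s)\big]\prod_{v\in\Sigma_{F,\infty}}\Gamma_{F_v}(1)$ into two lemmas. First I will show $h(w,s)\equiv 0$. Second I will compute $h^{\Diamond}(w,s)$ in closed form. Throughout I work in $\Re(s)\gg1$, factor everything as an Euler product over $v\in\Sigma_F$, and conclude by meromorphic continuation. Note that $\Gamma_{F_v}(1)=\Gamma_{\mathbb R}(1)=1$ for real $v$, so the archimedean constant is harmless.

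For $h(w,s)$ I use \eqref{f3.3} with $(0,t)w=(t,0)$, so $h_v(w,s)=\mu_v(-1)\int_{F_v^\times}\Phi_v(t_v,0)\eta_v(t_v)|t_v|_v^{1+2s}d^\times t_v$. At a place $v\mid\mathfrak{D}_{E/F}$ the factor $\mathbf 1_{\mathcal O_v^\times}(a_v)$ evaluated at $a_v=0$ kills the integrand, so $h_v(w,s)=0$ and hence $h(w,s)\equiv0$. Here we need $\mathfrak D_{E/F}\ne\mathcal O_F$, which holds for CM extensions of totally real fields other than the trivial case. Alternatively, the ramified $\eta_v$ integrated against $\mathbf 1_{\mathcal O_v}$ also vanishes.

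For $h^{\Diamond}(w,s)$ I first unfold as in Lemma \ref{lemma3.3}. Using $(0,t)wuw=(t,tb)$ and substituting $b\mapsto b/t$, I get
\[
h_v^{\Diamond}(w,s)=\mu_v(-1)\int_{F_v^\times}\int_{F_v}\Phi_v(t_v,c_v)\,dc_v\,\eta_v(t_v)|t_v|_v^{2s}d^\times t_v .
\]
I will get the determinant sign and $\mu_v(\det w)$ from \eqref{f3.3}, and the product $\prod_v\mu_v(-1)=1$ globally. Then I evaluate this local Tate integral place by place.

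At real $v$, $\int_{\mathbb R}(c-t\sqrt{\tau_v})^{2k+1}e^{-2\pi(c^2+t^2|\tau_v|)}dc$ with $\sqrt{\tau_v}=i|\tau_v|^{1/2}$ picks out the odd part in $t$ times a Gaussian. This produces a factor $(-i)^{2k+1}|\tau_v|^{k+1/2}t^{2k+1}\cdot(\text{moment})$. Integrating against $\sgn(t)|t|^{2s}$ then gives a constant times $|\tau_v|_v^{-s}\Gamma_{\mathbb C}(k+1-s)$ up to powers of $2$. The sign $(-1)^k$ together with $\mu_v(-1)$ assembles into $\epsilon_{\mu,k}$. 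At $v\mid\mathfrak D_E$ the $c$-integral of $\eta_v(c)\mathbf 1_{\mathcal O_v^\times}(c)$ now sits in the first variable, and I will recheck which slot carries $\eta_v$. I expect this to give $q_v^{d_{E_v}/2-d_{F_v}/2}$ times a power of $q_v$ accounting for $|\tau_v|_v^{-2s}$. At $v\mid\mathfrak f$, integrating $\mu_v(c^2-t^2\tau_v)$ over $c$ is a Gauss-sum type computation yielding $q_v^{-2s\,e_v(\mathfrak f)}$-type factors with no $L$-factor, which explains $N_F(\mathfrak f)^{-2s}$ and the removed Euler factors. At unramified $v$, $\int\mathbf 1_{\mathcal O_v}dc=q_v^{-d_{F_v}/2}$ and the $t$-integral over $\mathfrak a_v^{-1}$ gives $|\mathfrak a_v|^{-2s}L_v(1-2s,\eta_v)$, up to the $\eta_v$ value on a generator, which is $1$ since $\eta_v$ is unramified.

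Finally I assemble the pieces using $(\tau)=\mathfrak D_{E/F}\mathfrak a^2$ and the product formula $\prod_v|\tau|_v=1$. This converts $\prod_{v\mid\infty}|\tau_v|^{-s}\prod_{v\nmid\infty}(\cdots)$ into $D_{E/F}^{-2s}$, together with the $D_F^{-2s}$ from the different. The \textbf{main obstacle} is this bookkeeping of exponents: matching $2^{(1+2s)d}$, $D_{E/F}^{1/2-2s}D_F^{1/2-2s}$, and the sign, especially the $\mathfrak f$-local Gauss sum. That is where I would spend the care, checking against the symmetry predicted by Theorem \ref{thma}: replacing $s\mapsto -s$ should turn $\mathcal M_{\mathfrak f}(-s,k)$ into the stated expression.
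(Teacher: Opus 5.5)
Your proof that $h(w,s)\equiv 0$ is correct and is the paper's argument (Lemma \ref{lem6.6}). The computation of $h^{\Diamond}(w,s)$, however, rests on a wrong unfolding, and carried out as written it gives $0$.

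Since $wuw=\begin{pmatrix}1&0\\ b&1\end{pmatrix}$, one has $\det(wuw)=1$ and $(0,t)wuw=(tb,t)$. After $b\mapsto b/t$,
\[
h_v^{\Diamond}(w,s)=\int_{F_v^{\times}}\int_{F_v}\Phi_v(c_v,t_v)\,dc_v\,\eta_v(t_v)|t_v|_v^{2s}\,d^{\times}t_v .
\]
The integrated variable sits in the \emph{first} slot, and there is no factor $\mu_v(-1)$. The vector $(t,tb)$ you wrote is $(0,t)wu$, so your integral $\mu_v(-1)\int\int\Phi_v(t_v,c_v)\,dc_v\cdots$ is exactly $h_v^{\Diamond}(I_2,s)$. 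At $v\mid\mathfrak D_{E/F}$ its $c_v$-integral is $q_v^{d_{E_v}/2}\int_{\mathcal O_v^{\times}}\eta_v(c_v)\,dc_v=0$; this is the proof of Lemma \ref{lem5.2}. So your route yields $J_{\mathrm{Res}}\equiv 0$, contradicting the statement. You sensed a problem (``recheck which slot carries $\eta_v$''), but this is not a detail to tidy up later: it decides whether the term vanishes.

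Even with the correct integral, the local factors do not produce the target directly, and your plan misses the step that does.
\begin{itemize}
\item The Tate integrals against $|t_v|_v^{2s}$ give $L_v(2s,\eta_v)$, not $L_v(1-2s,\eta_v)$.
\item The real place gives $2^{-2s}(2\pi)^{-k-s}|\tau_v|_v^{-1/2}\Gamma(2s)/\Gamma(s-k)$, not $\Gamma_{\mathbb C}(k+1-s)$.
\item After multiplying out, the $\tau$- and $\mathfrak a$-dependence cancels exactly: by \eqref{6.22}, $|\tau_\infty|_\infty^{-1/2}N_F(\mathfrak a)D_E^{1/2}D_F^{-1}=1$. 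No product-formula mechanism produces $D_{E/F}^{-2s}$.
\end{itemize}
The factors $(D_{E/F}D_F)^{1/2-2s}$, $\Gamma_{\mathbb C}(k+1-s)^d$, $2^{-(1+2s)d}$ and $(-1)^{kd}$ all come from the global functional equation
\[
L(2s,\eta)=(D_FD_{E/F})^{1/2-2s}L(1-2s,\eta)\big[\Gamma_{\mathbb R}(2-2s)/\Gamma_{\mathbb R}(2s+1)\big]^d ,
\]
combined with the gamma identity \eqref{6.28}.

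Likewise, the $\mathfrak f$-adic factor is not a Gauss sum without $L$-factors. Using $\sum_{\beta\in\mathbb F_v}\mu_v(1-\tau_v\beta^2)=-\mu_v(-\tau_v)$ and split/inert casework, one gets $\mu_v(-1)q_v^{-2s}L_v(2s,\eta_v)L_v(1-2s,\eta_v)^{-1}$. Each piece has a role:
\begin{itemize}
\item $L_v(2s,\eta_v)$ is needed to complete $L(2s,\eta)$ before applying the functional equation.
\item $L_v(1-2s,\eta_v)^{-1}$ is what turns $L(1-2s,\eta)$ into $L^{(\mathfrak f)}(1-2s,\eta)$.
\item $\prod_{v\mid\mathfrak f}\mu_v(-1)=\mu_\infty(-1)$ is the source of that part of $\epsilon_{\mu,k}$.
\end{itemize}
Your bookkeeping takes $\mu_v(-1)$ from a determinant and then uses $\prod_v\mu_v(-1)=1$, so it would lose this sign.
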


Proposition \ref{prop6.2} follows immediately from Lemmas \ref{lem6.2} and \ref{lem6.6}, together with the identity $\Gamma_{F_v}(1)=\Gamma_{\mathbb{R}}(1)=1$ for every $v\in \Sigma_{F,\infty}$, since $F$ is totally real.

\begin{lemma}\label{lem6.2}
For $s\in \mathbb{C}$, we have
\begin{equation}
\label{6.8}
h^{\Diamond}(w,s)=
\frac{\epsilon_{\mu,k}\Gamma_{\mathbb{C}}(k+1-s)^{d}
D_{E/F}^{1/2-2s}D_F^{1/2-2s}}{2^{(1+2s)d}
N_F(\mathfrak{f})^{2s}
}L^{(\mathfrak{f})}(1-2s,\eta).
\end{equation}
\end{lemma}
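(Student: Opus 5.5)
Work with $\Re(s)\gg1$, where the Godement section is given by absolutely convergent integrals, and extend to all $s$ by meromorphic continuation at the end. First I would make $h^{\Diamond}(w,s)$ explicit. From \eqref{e2.2},
\[
h^{\Diamond}(w,s)=\int_{\mathbb{A}_F}h(wuw,s)\,du,\qquad wuw=\begin{pmatrix}1&\\ u&1\end{pmatrix}.
\]
This has determinant $1$, and $(0,t)\,wuw=(tu,t)$. Hence
\[
h^{\Diamond}(w,s)=\prod_{v\in\Sigma_F}h_v^{\Diamond}(w,s),\qquad
h_v^{\Diamond}(w,s)=\int_{F_v}\int_{F_v^{\times}}\Phi_v(tu,t)\,\eta_v(t)|t|_v^{1+2s}\,d^{\times}t\,du .
\]
Substituting $b=tu$ gives
\[
h_v^{\Diamond}(w,s)=\int_{F_v^{\times}}\widehat{\Phi}_v^{(1)}(t)\,\eta_v(t)|t|_v^{2s}\,d^{\times}t,
\]
where $\widehat{\Phi}_v^{(1)}(t):=\int_{F_v}\Phi_v(b,t)\,db$ is the partial integral in the first variable. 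This is the same shape as in \eqref{e3.11}. Each factor is a local Tate integral of $\eta_v$ at $2s$, so the product should be $L(1-2s,\eta)$ after a functional equation, or more directly via the $\mathfrak{a}$ and conductor normalizations.

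Next I would compute the local factors place by place, using the four cases in \S\ref{sec3.1.3}.

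\textbf{Unramified places $v\nmid\mathfrak f\mathfrak D_E$.} Here $\int_{F_v}\mathbf 1_{\mathfrak a_v^{-1}}(b)\,db=q_v^{e_v(\mathfrak a)-d_{F_v}/2}$, and the $t$-integral gives $L_v(2s,\eta_v)$. This is the wrong side of the functional equation, so I expect to invoke the global functional equation $\Lambda(2s,\eta)=\epsilon\,(D_FD_{E/F})^{1/2-2s}\Lambda(1-2s,\eta)$ at the end. The $\mathfrak a$-powers should cancel globally, since $\mathfrak a\in\mathrm{St}(E/F)^{-1}$ and $(\tau)=\mathfrak D_{E/F}\mathfrak a^2$.

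\textbf{Places $v\mid\mathfrak D_E$.} The $b$-integral gives $q_v^{-d_{F_v}/2}$, and the $t$-integral becomes $\int_{\mathcal O_v^\times}\eta_v^2=\mathrm{Vol}$. So the factor is a constant with no $L_v$; this matches the conductor of $\eta$ being exactly $\mathfrak D_{E/F}$.

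\textbf{Places $v\mid\mathfrak f$.} Here one must integrate $\mu_v(t^2-b^2\tau)$ over the $b$ with $t^2-b^2\tau$ a unit, and then over $t$. This is a character-sum computation. Using that $\mu_v$ is quadratic and ramified, with conductor exponent $1$ since $\mathfrak f$ is coprime to $2$ and tame, it should produce the factor $N(\mathfrak p_v)^{-2s}$ times a root-number contribution. I expect it also kills the Euler factor at $v$, which explains $L^{(\mathfrak f)}$ and $N_F(\mathfrak f)^{-2s}$.

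\textbf{Archimedean places.} Compute
\[
\int_{\mathbb R}\int_{\mathbb R^\times}(t-b\sqrt{\tau_v})^{2k+1}e^{-2\pi(t^2+b^2|\tau_v|)}\,\sgn(t)|t|^{2s}\,d^\times t\,db .
\]
Expand $(t-b\sqrt{\tau_v})^{2k+1}$ in $b$; only the even powers of $b$ survive the Gaussian integral. This yields a finite sum of Gamma values, which I would recombine into $\Gamma_{\mathbb C}(k+1-s)\cdot 2^{-1-2s}|\tau_v|^{-1/2}$ up to a sign $(-1)^k\mu_v(-1)$. The likely route is to recognize the archimedean local functional equation for the weight-$(2k+1)$ Gaussian, whose Fourier transform in $b$ has a simple form. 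The product over $v$ of these signs gives $\epsilon_{\mu,k}$, and the $|\tau_v|^{-1/2}$ factors combine with the finite $\mathfrak a$-powers via the product formula for $\tau$.

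Finally I would assemble the factors, apply the product formula to $\tau$, and convert $L(2s,\eta)$ to $L(1-2s,\eta)$ through the functional equation. This conversion yields the factor $(D_{E/F}D_F)^{1/2-2s}$ and, at $v\mid\mathfrak f$, the power $N_F(\mathfrak f)^{-2s}$.

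The main obstacle is the bookkeeping at the archimedean and $\mathfrak f$-places. At infinity one must check that the finite Gamma-sum collapses exactly to $\Gamma_{\mathbb C}(k+1-s)$ with the sign $(-1)^k$. At $v\mid\mathfrak f$ one must see that the twisted integral gives precisely $q_v^{-2s}$ and no stray Gauss-sum constant. Because $\mu_v$ is quadratic, the expected Gauss-sum factor has square $\mu_v(-1)q_v$, so it should be absorbed into the global sign $\mu_\infty(-1)=\prod_{v<\infty}\mu_v(-1)$. I would first do the $b$-integral at $v\mid\mathfrak f$ using the splitting behavior of $v$ in $E$ (split versus inert), and then treat the $t$-integral.
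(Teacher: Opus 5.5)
Your global skeleton matches the paper's. $h^{\Diamond}(w,s)$ factors into $\int_{F_v^\times}\int_{F_v}\Phi_v(b,t)\,db\,\eta_v(t)|t|_v^{2s}d^\times t$. The unramified and $\mathfrak D_E$-places are as you say. $N_F(\mathfrak a)$ cancels against $|\tau_\infty|^{-1/2}=D_{E/F}^{-1/2}N_F(\mathfrak a)^{-1}$, and one finishes with the functional equation of $L(2s,\eta)$.

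\textbf{Archimedean places.} The two local evaluations that carry the content of the lemma are not done, and your predictions for them would break the assembly. At $v\mid\infty$ the integrand contains no $\mu_v$: the section is evaluated at $wuw$, which has determinant $1$, and the Godement character is $\mu^2\overline{\omega}=\eta$. So no $\mu_v(-1)$ can arise there. Nor is the local value $\pm 2^{-1-2s}\Gamma_{\mathbb C}(k+1-s)|\tau_v|^{-1/2}$. Your binomial sum evaluates to
\[
(2\pi)^{-k-s-1}|\tau_v|^{-1/2}\sqrt{\pi}\,\Gamma(s+\tfrac12)\prod_{j=1}^{k}(s-j)=2^{-2s}(2\pi)^{-k-s}|\tau_v|^{-1/2}\frac{\Gamma(2s)}{\Gamma(s-k)},
\]
which is the paper's polar-coordinate answer, and its poles differ from those of $\Gamma_{\mathbb C}(k+1-s)$. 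The binomial route is legitimate, but only once you prove this closed form. The factor $(-1)^k2^{-2s-1}\Gamma_{\mathbb C}(k+1-s)$ appears only after multiplying by $\Gamma_{\mathbb R}(2-2s)/\Gamma_{\mathbb R}(2s+1)$. That is the archimedean ratio produced when $L(2s,\eta)$ is converted to $L(1-2s,\eta)$, via duplication and $\Gamma(s)\Gamma(1-s)/\Gamma(s-k)=(-1)^k\Gamma(k+1-s)$. As planned, you would either drop this ratio or count it twice.

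\textbf{Places $v\mid\mathfrak f$.} There is no Gauss sum here. A factor of modulus $q_v^{1/2}$ cannot be absorbed into a sign, and it would leave a spurious $N_F(\mathfrak f)^{1/2}$. The missing computation is elementary. Write $t=\varpi_v^n\alpha$.
\begin{itemize}
\item For $n=0$, rescale $b\mapsto\alpha b$ and use $\sum_{\beta\in\mathbb F_v}\mu_v(1-\tau_v\beta^2)=-\mu_v(-\tau_v)$.
\item For $n\ge1$, the unit condition forces $b\in\mathcal O_v^\times$, and then $\mu_v(\varpi_v^{2n}\alpha^2-b^2\tau_v)=\mu_v(-\tau_v)$.
\end{itemize}
Summing gives $\mu_v(-\tau_v)\bigl(\eta_v(\varpi_v)q_v^{-2s}-q_v^{-1}\bigr)L_v(2s,\eta_v)$. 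Since $\mu_v(\tau_v)=\eta_v(\varpi_v)$ in both the split and inert cases, this equals $\mu_v(-1)q_v^{-2s}L_v(2s,\eta_v)/L_v(1-2s,\eta_v)$.

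\textbf{Sign bookkeeping.} This is the only source of $\mu_\infty(-1)=\prod_{v\mid\mathfrak f}\mu_v(-1)$ in $\epsilon_{\mu,k}$. The factor $L_v(1-2s,\eta_v)^{-1}$ is what produces $L^{(\mathfrak f)}(1-2s,\eta)$. The $(-1)^{kd}$ comes from the Gamma identity above. Attributing $\mu_v(-1)$ to the archimedean places, on top of the $\mathfrak f$-adic contribution, misplaces the sign and would cancel $\mu_\infty(-1)$ altogether.
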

\begin{proof}
We first assume $\Re(s)\gg 1$. By the definition of $h^{\Diamond}(\cdot,s)$ in \textsection\ref{sec2.1.2},
\begin{align*}
h^{\Diamond}(w,s)=\prod_{v\in \Sigma_F}h_v^{\Diamond}(w,s),
\end{align*}
where
\begin{align*}
h_v^{\Diamond}(w,s):=\int_{F_v^{\times}}\int_{F_v}
\Phi_v(c_v,t_v)dc_v\eta_v(t_v)|t_v|_v^{2s}d^{\times}t_v.
\end{align*}
The product is absolutely convergent in this region. We compute the local factors from the definition of $\Phi_v(\cdot,\cdot)$ in \textsection\ref{sec3.1.3}.

Suppose first that $v\in \Sigma_{F,\infty}$. Put $A=|\tau_v|_v^{1/2}$. After the change of variables $c\mapsto Ac$, we get
\begin{align*}
h_v^{\Diamond}(w,s)=A^{-1}\int_{\mathbb{R}^{\times}}\int_{\mathbb{R}}
(t-ci)^{2k+1}e^{-2\pi(t^2+c^2)}dc\sgn(t)|t|^{2s}d^{\times}t.
\end{align*}
Passing to polar coordinates $t=r\cos\theta$, $c=r\sin\theta$, so that $t-ic=re^{-i\theta}$ and $dcdt=rdrd\theta$, gives
\begin{equation}\label{6.11}
h_v^{\Diamond}(w,s)
=
\frac{1}{A}
\int_0^{\infty}\frac{r^{2k+2s+1}}{e^{2\pi r^2}}dr
\int_0^{2\pi}
e^{-i(2k+1)\theta}\sgn(\cos\theta)|\cos\theta|^{2s-1}d\theta.
\end{equation}
The radial integral equals $2^{-1}(2\pi)^{-k-s-1}\Gamma(k+s+1)$. For the angular integral, parity and the oddness of $2k+1$ give
\begin{align*}
\int_0^{2\pi}
e^{-i(2k+1)\theta}\sgn(\cos\theta)|\cos\theta|^{2s-1}d\theta
=
4\int_0^{\frac{\pi}{2}}\cos((2k+1)\theta)\cos^{2s-1}\theta\,d\theta.
\end{align*}
Using
\begin{align*}
\int_0^{\frac{\pi}{2}}\cos(a\theta)\cos^{\nu-1}\theta\,d\theta
=
\frac{\pi\Gamma(\nu)}
{2^{\nu}\Gamma\left(\frac{\nu+1+a}{2}\right)
\Gamma\left(\frac{\nu+1-a}{2}\right)}
\end{align*}
with $\nu=2s$ and $a=2k+1$, we obtain
\begin{align*}
4\int_0^{\frac{\pi}{2}}\cos((2k+1)\theta)\cos^{2s-1}\theta\,d\theta
=
\frac{4\pi\Gamma(2s)}
{2^{2s}\Gamma(s+k+1)\Gamma(s-k)}.
\end{align*}
Substituting these two evaluations into \eqref{6.11} yields
\begin{equation}\label{6.12}
h_v^{\Diamond}(w,s)=
2^{-2s}(2\pi)^{-k-s}A^{-1}
\frac{\Gamma(2s)}{\Gamma(s-k)}.
\end{equation}

We now turn to the finite places. 
\begin{itemize}
\item Suppose $v\mid \mathfrak{D}_{E}$, then by the local definition of $\Phi_v$,
\begin{align*}
h_v^{\Diamond}(w,s)
&=
q_v^{d_{E_v}/2}\int_{F_v^{\times}}\int_{F_v}
\eta_v(t_v)\mathbf{1}_{\mathcal{O}_v^{\times}}(t_v)\mathbf{1}_{\mathcal{O}_v}(c_v)dc_v\eta_v(t_v)|t_v|_v^{2s}d^{\times}t_v \\
&=
q_v^{d_{E_v}/2-d_{F_v}}L_v(2s,\eta_v).
\end{align*}
Thus
\begin{equation}\label{6.13}
h_v^{\Diamond}(w,s)=q_v^{d_{E_v}/2-d_{F_v}}L_v(2s,\eta_v).
\end{equation}

\item Suppose $v\mid \mathfrak{f}$. Then
\begin{align*}
h_v^{\Diamond}(w,s)=\int_{\mathcal{O}_v-\{0\}}\int_{\mathcal{O}_v}
\mu_v(t_v^2-c_v^2\tau_v)\mathbf{1}_{\mathcal{O}_v^{\times}}(t_v^2-c_v^2\tau_v)dc_v
\eta_v(t_v)|t_v|_v^{2s}d^{\times}t_v.
\end{align*}
Since $\eta_v$ is unramified, writing $t_v=\varpi_v^n\alpha$ gives
\begin{equation}\label{6.14}
h_v^{\Diamond}(w,s)=\sum_{n\geq 0}\eta_v(\varpi_v)^nq_v^{-2ns}\int_{\mathcal{O}_v^{\times}}J_n(\alpha)d\alpha,
\end{equation}
where
\begin{align*}
J_n(\alpha):=\int_{\mathcal{O}_v}
\mu_v(\varpi_v^{2n}\alpha^2-c_v^2\tau_v)
\mathbf{1}_{\mathcal{O}_v^{\times}}(\varpi_v^{2n}\alpha^2-c_v^2\tau_v)dc_v.
\end{align*}
For $n=0$, the change $c_v\mapsto \alpha c_v$ and the standard finite-field identity
\begin{align*}
\sum_{\beta\in \mathbb{F}_v}\mu_v(1-\tau_v \beta^2)
=
-\mu_v(-\tau_v)
\end{align*}
give
\begin{equation}\label{6.15}
J_0(\alpha)=-q_v^{-1}\mu_v(-\tau_v).
\end{equation}
For $n\geq 1$, the unit condition is equivalent to $c_v\in \mathcal{O}_v^{\times}$, and
\begin{align*}
\varpi_v^{2n}\alpha^2-c_v^2\tau_v
=
-c_v^2\tau_v
\left(1-\frac{\varpi_v^{2n}\alpha^2}{c_v^2\tau_v}\right).
\end{align*}
The last factor lies in $1+\mathfrak{p}_v^2$ and is killed by $\mu_v$, while $\mu_v(c_v^2)=1$. Hence
\begin{equation}\label{6.16}
J_n(\alpha)=(1-q_v^{-1})\mu_v(-\tau_v).
\end{equation}
Substituting \eqref{6.15} and \eqref{6.16} into \eqref{6.14}, we obtain
\begin{align*}
h_v^{\Diamond}(w,s)=\mu_v(-\tau_v)
\left(\eta_v(\varpi_v)q_v^{-2s}-q_v^{-1}\right)
L_v(2s,\eta_v).
\end{align*}
Since $\mathfrak{f}$ is prime to $2\mathfrak{D}_E$, the place $v$ is either split or inert unramified. If $v$ is split, then $\eta_v=1$ and $\tau_v$ is a square in $\mathcal{O}_v^{\times}$, so
\begin{align*}
h_v^{\Diamond}(w,s)=\mu_v(-1)
q_v^{-2s}\left(1-q_v^{-1+2s}\right)
L_v(2s,\eta_v).
\end{align*}
If $v$ is inert unramified, then $\eta_v(\varpi_v)=-1$, $\tau_v$ is a nonsquare unit, and $\mu_v(\tau_v)=-1$. Thus
\begin{align*}
h_v^{\Diamond}(w,s)=\mu_v(-1)
q_v^{-2s}\left(1+q_v^{-1+2s}\right)
L_v(2s,\eta_v).
\end{align*}
The two cases combine into
\begin{equation}\label{6.20}
h_v^{\Diamond}(w,s)=\mu_v(-1)
q_v^{-2s}L_v(1-2s,\eta_v)^{-1}
L_v(2s,\eta_v).
\end{equation}

\item Finally, if $v\nmid \mathfrak{f}\mathfrak{D}_{E}$, then directly from the unramified definition of $\Phi_v(\cdot,\cdot)$,
\begin{align*}
h_v^{\Diamond}(w,s)=q_v^{d_{E_v}/2}\int_{F_v^{\times}}\int_{F_v}
\mathbf{1}_{\mathcal{O}_v}(t_v)\mathbf{1}_{\mathfrak{a}_v^{-1}}(c_v)dc_v\eta_v(t_v)|t_v|_v^{2s}d^{\times}t_v.
\end{align*}
Therefore, 
\begin{equation}\label{6.25}
h_v^{\Diamond}(w,s)=q_v^{d_{E_v}/2-d_{F_v}}q_v^{e_v(\mathfrak{a})}L_v(2s,\eta_v).
\end{equation}
\end{itemize}

Multiplying \eqref{6.12}, \eqref{6.13}, \eqref{6.20}, and \eqref{6.25}, and using $\prod_{v\mid\mathfrak{f}}\mu_v(-1)=\mu_{\infty}(-1)$, we obtain, for $\Re(s)\gg 1$,
\begin{multline}\label{6.26}
h^{\Diamond}(w,s)=\mu_{\infty}(-1)N_F(\mathfrak{f})^{-2s}\Big[ 2^{-2s}(2\pi)^{-k-s}
\frac{\Gamma(2s)}{\Gamma(s-k)} \Big]^{d}\\
|\tau_{\infty}|_{\infty}^{-1/2}N_F(\mathfrak{a})D_{E}^{1/2}D_F^{-1}L(2s,\eta)\prod_{v\mid\mathfrak{f}}L_v(1-2s,\eta_v)^{-1}.
\end{multline}
By meromorphic continuation, \eqref{6.26} holds for all $s\in \mathbb{C}$.

By the definition \eqref{4.3}, we have
\begin{equation}\label{6.22}
|\tau_{\infty}|_{\infty}=N_F((\tau))=N_F(\mathfrak{D}_{E/F})N_F(\mathfrak{a})^2=D_{E/F}N_F(\mathfrak{a})^2.
\end{equation}
Substituting \eqref{6.22} into \eqref{6.26}, the factors involving $D_{E/F}$ and $N_F(\mathfrak{a})$ cancel, giving
\begin{multline}\label{6.26.}
h^{\Diamond}(w,s)=\mu_{\infty}(-1)N_F(\mathfrak{f})^{-2s}\Big[ 2^{-2s}(2\pi)^{-k-s}
\frac{\Gamma(2s)}{\Gamma(s-k)} \Big]^{d}\\
L(2s,\eta)\prod_{v\mid\mathfrak{f}}L_v(1-2s,\eta_v)^{-1}.
\end{multline}
The functional equation gives
\begin{equation}\label{6.27}
L(2s,\eta)=
D_F^{\frac{1}{2}-2s}D_{E/F}^{\frac{1}{2}-2s}L(1-2s,\eta)
\bigg[
\frac{\Gamma_{\mathbb{R}}(2-2s)}
{\Gamma_{\mathbb{R}}(2s+1)}
\bigg]^{d}.
\end{equation}
Moreover,
\begin{equation}\label{6.28}
2^{-2s}(2\pi)^{-k-s}
\frac{\Gamma(2s)}{\Gamma(s-k)}
\cdot 
\frac{\Gamma_{\mathbb{R}}(2-2s)}
{\Gamma_{\mathbb{R}}(2s+1)}
=(-1)^k2^{-2s-1}\Gamma_{\mathbb{C}}(k+1-s).
\end{equation}
Combining \eqref{6.26.}, \eqref{6.27}, and \eqref{6.28} gives \eqref{6.8}. This proves the lemma.
\end{proof}

\begin{lemma}\label{lem6.6}
For $s\in \mathbb{C}$, we have
\begin{equation}\label{e6.24}
h(w,s)\equiv 0.  
\end{equation}
\end{lemma}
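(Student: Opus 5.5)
The plan is to argue exactly as in Lemma \ref{lem5.2}: show that $h(w,s)$ factors as an Euler product and that one local factor vanishes identically. First assume $\Re(s)\gg 1$. By \eqref{e2.2},
\begin{align*}
h(w,s)=\prod_{v\in\Sigma_F}h_v(w,s),\qquad h_v(w,s)=\mu_v(-1)\int_{F_v^{\times}}\Phi_v((0,t_v)w)\eta_v(t_v)|t_v|_v^{1+2s}d^{\times}t_v .
\end{align*}
Since $(0,t_v)w=(t_v,0)$, each local factor reduces to
\begin{align*}
h_v(w,s)=\mu_v(-1)\int_{F_v^{\times}}\Phi_v(t_v,0)\eta_v(t_v)|t_v|_v^{1+2s}d^{\times}t_v .
\end{align*}
The product converges absolutely in this region, because at almost all places the factor is a shifted local $L$-factor.

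Next, pick a finite place $v\mid\mathfrak{D}_{E/F}$; such a place exists since a CM extension of a totally real field is ramified at some finite prime only when $D_{E/F}>1$ (see below). At such a place,
\begin{align*}
\Phi_v(b_v,a_v)=q_v^{d_{E_v}/2}\eta_v(a_v)\mathbf{1}_{\mathcal{O}_v^{\times}}(a_v)\mathbf{1}_{\mathcal{O}_v}(b_v),
\end{align*}
so $\Phi_v(t_v,0)$ contains the factor $\mathbf{1}_{\mathcal{O}_v^{\times}}(0)=0$. Hence $\Phi_v(t_v,0)\equiv 0$ and $h_v(w,s)=0$. This already gives $h(w,s)=0$ for $\Re(s)\gg1$, and the identity \eqref{e6.24} then holds for all $s$ by meromorphic continuation. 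The archimedean factor is harmless: there $\Phi_v(t_v,0)=t_v^{2k+1}e^{-2\pi t_v^2|\tau_v|_v}$, which gives a finite Gamma value, so it cannot cancel the zero.

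The only real point to check is that $\mathfrak{D}_{E/F}\neq\mathcal{O}_F$, i.e.\ that some finite place ramifies in $E/F$. If $E/F$ were unramified at all finite places, there would be no finite place with the vanishing factor. In that case I would look instead at the archimedean factor, whose angular structure might force a vanishing, or at the places $v\mid\mathfrak{f}$. At $v\mid\mathfrak{f}$ we have $\Phi_v(t_v,0)=\mu_v(t_v^2)\mathbf{1}_{\mathcal{O}_v}(t_v)\mathbf{1}_{\mathcal{O}_v^{\times}}(t_v^2)$, which is nonzero, so this does not help. However, the conventions of the paper effectively presuppose that the canonical characters are ramified at primes dividing $\mathfrak{D}_{E/F}$, and $J_{\mathrm{Const}}$ already used a place $v\mid\mathfrak{D}_{E/F}$ in Lemma \ref{lem5.2}. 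I would therefore adopt the same standing assumption, namely that $\mathfrak{D}_{E/F}$ is nontrivial, and note it explicitly. With that assumption the argument is a one-line local vanishing, mirroring Lemma \ref{lem5.2}.
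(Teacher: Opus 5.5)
Your proof is the same as the paper's. For $\Re(s)\gg 1$ you factor $h(w,s)$ over places, use $(0,t_v)w=(t_v,0)$ to see that $\Phi_v(t_v,0)\equiv 0$ at a finite $v\mid\mathfrak{D}_{E/F}$ (because of the factor $\mathbf{1}_{\mathcal{O}_v^{\times}}(0)=0$), and conclude by meromorphic continuation. The paper's proof likewise simply takes such a place, so your explicit standing assumption $\mathfrak{D}_{E/F}\neq\mathcal{O}_F$ only makes visible a hypothesis the paper uses tacitly; your opening sentence asserting that such a place exists is circular, but stating it as an assumption is the right fix, and it is genuinely needed, since if no finite place ramifies then the archimedean factors (nonzero multiples of $\Gamma(k+1+s)$) and all finite factors are nonzero.
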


\begin{proof}
Again it suffices to prove the identity for $\Re(s)\gg 1$. By definition,
\begin{equation}\label{e6.25}
h(w,s)=\prod_{v\in \Sigma_F}h_v(w,s),
\end{equation}
where
\begin{align*}
h_v(w,s):=\int_{F_v^{\times}}
\Phi_v(t_v,0)\eta_v(t_v)|t_v|_v^{1+2s}d^{\times}t_v.
\end{align*}
The product is absolutely convergent in this region.

For any finite place $v\mid \mathfrak{D}_{E/F}$, the local test function satisfies $\Phi_v(t_v,0)\equiv 0$ by construction. Thus $h_v(w,s)=0$, and consequently $h(w,s)=0$ for $\Re(s)\gg 1$. The general case follows by meromorphic continuation.
\end{proof}

\section{Estimates for $J_{\mathrm{Dual}}^{\heartsuit}(h(\cdot,s))$}\label{sec7}
In this section we estimate the dual side $J_{\mathrm{Dual}}^{\heartsuit}(h(\cdot,s))$ appearing on the right-hand side of Theorem \ref{thm3.1}. Recall that
\begin{equation}\label{7.1}
J_{\mathrm{Dual}}^{\heartsuit}(h(\cdot,s)):=\frac{1}{2\pi\underset{\lambda=1}{\Res}\ \zeta_F(\lambda)}\sum_{\xi\in \widehat{C}_F}\int_{\mathbb{R}}\widetilde{\Psi}(W(\cdot,s),\xi|\cdot|^{it})dt.
\end{equation}

For an integral ideal $\mathfrak{m}\subseteq\mathcal{O}_F$, let $\widehat{C}_F(\mathfrak{m})$ denote the subset of $\widehat{C}_F$ consisting of characters of arithmetic conductor $\mathfrak{m}$. 

For $t\in \mathbb{R}$ and $\xi=\otimes_{v\in \Sigma_{F,v}}'\xi_v\in \widehat{C}_F$, at $v\in \Sigma_{F,\infty}$, let $\boldsymbol{\nu}_{\xi_v,t}$ be the spectral parameter of $\xi_v|\cdot|_v^{it}$; thus $\boldsymbol{\nu}_{\xi_v,t}=t+\nu_v$ if $\xi_v=\sgn^{j}|\cdot|^{i\nu_v}$. For $R>0$, define the spectral box of size $R$ by
\begin{align*}
\mathbf{B}(\boldsymbol{\nu}_{\xi_{\infty},t},R):=\Big\{|\boldsymbol{\nu}_{\xi_v,t}|\leq R:\ v\in \Sigma_{F,\infty}\Big\}.
\end{align*}

Let $0<\delta<1/2$, $|\Re(s)|\leq \delta$, and $0<\varepsilon<10^{-3}$. Define
\begin{multline}\label{e7.2}
\mathcal{J}_{\mathrm{Split}}(s,\eta;\mathfrak{f}):=\sum_{\mathfrak{m}\supseteq \mathfrak{f}}N_F(\mathfrak{m})^{-1/2+\delta}\sum_{\xi\in \widehat{C}_F(\mathfrak{m})}\\
\int_{\mathbf{B}(\boldsymbol{\nu}_{\xi_{\infty},t},QN_F(\mathfrak{f}\mathfrak{D}_{E})^{\varepsilon})}\big|L(1/2+it,\xi)L(1/2+it,\eta\xi)\big|dt.
\end{multline}

\begin{prop}\label{prop7.1}
Let $0<\delta<1/2$ and $|\Re(s)|\leq \delta$. Let $Q=1+2k+|s|$ and $0<\varepsilon<10^{-3}$. Then
\begin{multline}\label{eq7.3}
J_{\mathrm{Dual}}^{\heartsuit}(h(\cdot,s))\ll_{F,\delta,\varepsilon} \frac{D_{E/F}^{\frac{1}{4}+2\delta}|\Gamma_{\mathbb C}(k+1+s)|^{d}}{\big(Q^{\frac{1}{4}-2\delta}M^{-\varepsilon\delta}\big)^{d}}\cdot\mathcal{J}_{\mathrm{Split}}(s,\eta;\mathfrak{f})\\
+|\Gamma_{\mathbb C}(k+1+s)|^{d}Q^{-100}N_F(\mathfrak{f}\mathfrak{D}_{E})^{-100}.
\end{multline}
In particular, we have
\begin{equation}\label{eq7.4}
J_{\mathrm{Dual}}^{\heartsuit}(h(\cdot,s))\ll_{F,\delta,\varepsilon}  |\Gamma_{\mathbb C}(k+1+s)|^{d}D_{E/F}^{\frac{7}{16}+2\delta+\varepsilon}
Q^{\frac{15d}{16}+2\delta d+\varepsilon}N_F(\mathfrak{f})^{\frac{11}{16}+\delta+\varepsilon}.
\end{equation}
\end{prop}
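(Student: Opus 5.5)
The plan is to compute $\widetilde{\Psi}(W(\cdot,s),\xi|\cdot|^{it})$ explicitly as a product of local zeta integrals, then estimate the resulting dual sum. Two structural facts drive everything:
\begin{itemize}
\item \emph{Factorization.} By the Godement representation in the proof of Lemma \ref{lemma3.3}, $\Psi(W(\cdot,s),\xi|\cdot|^{\lambda})$ factors as $\prod_v\Psi_v$, where
\begin{align*}
\Psi_v=\int\int\int \Phi_v(y,c)\overline{\psi}_v(ct)\,dc\,\xi_v\overline{\mu}_v\eta_v(t)|t|_v^{1/2-s+\lambda}\,d^{\times}t\;\xi_v\mu_v(y)|y|_v^{1/2+s+\lambda}\,d^{\times}y .
\end{align*}
In words, it is a Tate integral in $y$ times a Tate integral in $t$ of the partial Fourier transform of $\Phi_v$ in the second variable.
\item \emph{Twist by $\mu$.} Since $\mu$ is quadratic, twisting by $\mu$ permutes $\widehat{C}_F$. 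After the substitution $\xi\mapsto\xi\mu$, the global $L$-functions become $L(1/2+\lambda+s,\xi)L(1/2+\lambda-s,\eta\xi)$. This is where $\mathcal{J}_{\mathrm{Split}}(s,\eta;\mathfrak{f})$ comes from.
\end{itemize}

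\textbf{Finite places.} I would compute $\Psi_v$ place by place for each of the four local types of $\Phi_v$ from \textsection\ref{sec3.1.3}.
\begin{itemize}
\item At $v\nmid\mathfrak{f}\mathfrak{D}_E$: the Fourier transform of $\mathbf{1}_{\mathcal{O}_v}$ is supported on $\mathfrak{D}_{F_v}^{-1}$. So $\Psi_v$ is the unramified product of $L$-factors, times a factor $q_v^{\ast}$ coming from $\mathfrak{a}_v$ and the different. This vanishes unless $\xi_v$ is unramified.
\item At $v\mid\mathfrak{D}_E$: the $c$-Fourier transform of $\eta_v\mathbf{1}_{\mathcal{O}_v^\times}$ is a Gauss sum supported on $|t|_v=q_v^{1+d_{F_v}}$. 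The $t$-integral is then nonzero only when $\xi\mu\eta$ is unramified at $v$, i.e.\ $\xi_v$ has conductor exponent at most $1$. It produces a root-number factor of size $q_v^{\pm(1/2)}$ times $q_v^{(\dots)\Re\lambda\pm\Re s}$.
\item At $v\mid\mathfrak{f}$: $\Phi_v$ is $\mu_v\circ\mathrm{Nr}$ on a unit set. I would expand it via finite-field character sums. The expected outcome is that $\Psi_v$ is supported on $\xi_v\mu_v$ of conductor dividing $\mathfrak{p}_v$, with size $\ll q_v^{-1/2+|\Re s|}$ up to local $L$-factors. This gives the weights $N_F(\mathfrak{m})^{-1/2+\delta}$ with $\mathfrak{m}\supseteq\mathfrak{f}$.
\end{itemize}
Collecting powers of $q_v$, the $D_{E/F}$-dependence should be $D_{E/F}^{1/4+2\delta}$ after shifting to $\Re\lambda=0$. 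I expect $|\tau_\infty|$ and $N_F(\mathfrak{a})$ to cancel exactly as in \eqref{6.22}.

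\textbf{Archimedean places.} Here $\Phi_v=(a-b\sqrt{\tau_v})^{2k+1}e^{-2\pi(\cdot)}$, and its partial Fourier transform in $a$ is again Gaussian times a Hermite-type polynomial. The local integral $\Psi_v$ becomes a Mellin transform expressible through hypergeometric or Beta-type integrals in $(s,k,\nu)$. The required bound is
\begin{align*}
\Psi_v\ll |\Gamma_{\mathbb C}(k+1+s)|\,Q^{-1/4+2\delta}\,(1+|\nu|/Q)^{-A},
\end{align*}
which cuts off the spectral integral to the box $|\boldsymbol{\nu}_{\xi_v,t}|\le QN_F(\mathfrak{f}\mathfrak{D}_E)^{\varepsilon}$. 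Outside the box, rapid decay yields the $Q^{-100}N^{-100}$ term.

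I expect this archimedean step to be the main obstacle: proving both the $Q^{-1/4}$ saving and the uniform decay in $\nu$. I would first try stationary phase or Stirling applied to an explicit Mellin–Barnes representation of $\Psi_v$, which should display the transition at $|\nu|\asymp Q$. Here $M$ denotes the box length $QN^{\varepsilon}$, and the factor $M^{-\varepsilon\delta}$ absorbs the losses from $|t|^{\pm\Re s}$.

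\textbf{Deducing \eqref{eq7.4}.} Combining the local bounds gives \eqref{eq7.3}. To pass to \eqref{eq7.4}, I would bound $\mathcal{J}_{\mathrm{Split}}$ by applying Cauchy–Schwarz to $|L(1/2+it,\xi)L(1/2+it,\eta\xi)|$. One factor is handled by the Weyl-type/Burgess-type hybrid subconvexity bound of \cite{Yan26b} for $L(1/2+it,\eta\xi)$, with saving $D_{E/F}^{3/16}$-type in the conductor $N(\mathfrak{m})D_{E/F}Q^d$. The other factor, $L(1/2+it,\xi)$, is handled by a second-moment (large sieve) bound over $\xi\in\widehat{C}_F(\mathfrak{m})$ and $t$ in the box. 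This gives
\begin{align*}
\sum_{\xi}\int|L(1/2+it,\xi)|^2\ll (N(\mathfrak{m})Q^d)^{1+\varepsilon}.
\end{align*}
Summing over $\mathfrak{m}\supseteq\mathfrak{f}$ and tracking exponents should give $D_{E/F}^{3/16}$, $Q^{15d/16+\cdots}$ and $N_F(\mathfrak{f})^{11/16+\delta}$. Multiplying by the prefactor $D_{E/F}^{1/4+2\delta}$ then yields \eqref{eq7.4}.
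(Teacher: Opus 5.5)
Your architecture matches the paper's. You substitute $\xi\mapsto\mu\xi$ and factor $\widetilde{\Psi}$ as $\Lambda(1/2+it,\pi\otimes\xi)$, with $\pi=|\cdot|^s\boxplus\eta|\cdot|^{-s}$, times normalized local integrals. You then compute the finite places, prove a $Q^{-1/4+O(\delta)}$ archimedean bound for $|T|\le 2Q$ with rapid decay beyond, and use convexity for the tail. Finally you get \eqref{eq7.4} from a pointwise Burgess bound for $L(1/2+it,\eta\xi)$ (the paper takes it from \cite{Yan26}) together with Cauchy--Schwarz and the large-sieve second moment for $L(1/2+it,\xi)$. However, two steps fail as written.

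\textbf{The place $v\mid\mathfrak{D}_E$.} Your support condition is wrong. The partial Fourier transform of $\eta_v\mathbf{1}_{\mathcal{O}_v^{\times}}$ is not a function of $|t|_v$ alone. The substitution $c\mapsto cu^{-1}$ shows it equals $\eta_v(t)$ times a function of $|t|_v$ supported on $|t|_v=q_v^{1+d_{F_v}}$. Hence the $\eta_v$ in the $t$-character $\xi_v\overline{\mu}_v\eta_v$ cancels. The $t$-integral therefore forces $\xi_v\overline{\mu}_v$ to be unramified, not $\xi_v\mu_v\eta_v$; since $\mu_v$ is unramified there, this means $\xi_v$ is unramified.

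As you stated it, the claim is even inconsistent. The $y$-integral over $\mathcal{O}_v$ already forces $\xi_v\mu_v$ unramified, so together with your condition the local factor would vanish identically, because $\eta_v$ is ramified. The correct computation gives \eqref{fc7.21}, with $\xi_v$ unramified at every $v\mid\mathfrak{D}_E$. This is exactly why $\mathcal{J}_{\mathrm{Split}}(s,\eta;\mathfrak{f})$ runs only over $\mathfrak{m}\supseteq\mathfrak{f}$. Admitting characters of conductor exponent $1$ at the primes ramified in $E/F$, as your version does, changes the dual sum. In the second-moment step it would also cost an extra power of $D_{E/F}$ in \eqref{eq7.4}.

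\textbf{The place $v\mid\mathfrak{f}$.} Your expected outcome is right, but it needs two inputs you do not name. First, the inner integral is invariant under $y\mapsto uy$ for $u\in 1+\mathfrak{p}_v$, which gives $r_{\xi_v}\le 1$. Second, Weil's bound for the mixed sum $\sum_{\alpha\in\mathbb{F}_v^{\times}}\mu_v(\alpha^2-\tau_v)\overline{\xi}_v\eta_v(\alpha)$ supplies the factor $q_v^{-1/2}$ when $r_{\xi_v}=1$.

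\textbf{The archimedean bound.} This is the heart of the proposition, and you only assert it; the route you sketch does not obviously produce it. Fourier transforming in $c$ first gives a Gaussian times a Hermite-type polynomial of degree $2k+1$. The double Mellin transform is then a terminating sum of Gamma-ratios of alternating sign, each far larger than the total. Stirling applied to such a sum cannot detect the $Q^{-1/4}$, so you would have to convert back to an integral with a genuine phase.

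The paper does this directly. It integrates out the Gaussian radial variable $a$ first, which produces the homogeneous kernel $\Gamma_{\mathbb{C}}(k+1+s)(c-iAy)^{2k+1}(c^2+A^2y^2)^{-k-1-s}$ with $A=|\tau_v|_v^{1/2}$. Polar coordinates in the $(c,Ay)$-plane then split off $A^{-1/2-s-iT}\Gamma(1/2-s+iT)$ times $\sin$ or $\cos$ of $\frac{\pi}{2}(1/2-s+iT)$, which Stirling controls. What remains are the angular integrals $\mathcal{J}_k^{\pm}(s,T)$ of Lemma \ref{lem7.2}.

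After the change of variables $x=\log\tan\theta$, the phase satisfies $\Psi_\pm'=F_\pm-T$ and $\Psi_\pm''=hB_\pm$, with $F_\pm^2+B_\pm^2=Q'^2$. This identity locates the turning point at $|T|=Q'$. The paper then sums over the blocks $j\le|x|\le j+1$, where $h\asymp e^{-j}$:
\begin{itemize}
\item away from the turning point it uses van der Corput's second-derivative test;
\item near the turning point it uses the third-derivative test, which is where the exponent $-1/4$ comes from;
\item for $|T|\ge(1+\varepsilon)Q'$ it integrates by parts repeatedly.
\end{itemize}
Together these give Proposition \ref{prop7.3}.

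The decay needed outside the box is $(1+|T|)^{-m}$ for $|T|>2Q$. Your weaker form $(1+|\nu|/Q)^{-A}$ leaves a positive power of $Q$ in the tail. It therefore cannot give the term $Q^{-100}N_F(\mathfrak{f}\mathfrak{D}_E)^{-100}$.
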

\begin{remark}
The implied constants in \eqref{eq7.3} and \eqref{eq7.3} are independent of
$\mathfrak{a}$, and hence independent of the choice of $\tau$ used in the
parametrization $E=F(\sqrt{\tau})$.
\end{remark}

\subsection{Structure of Split Period Integrals}

Let $\pi=|\cdot|^s\boxplus \eta|\cdot|^{-s}$, and write $\pi=\otimes_{v\in \Sigma_F}'\pi_v$ for its local factorization. By \eqref{e2.6}, we have 
\begin{equation}\label{7.2}
\widetilde{\Psi}(W(\cdot,s),\mu\xi|\cdot|^{it})=\Lambda(1/2+it,\pi\otimes\xi)\prod_{v\in \Sigma_F}\frac{\mathcal{P}_v(s,it;\xi_v)}{L_v(1/2+it,\pi_v\otimes\xi_v)},
\end{equation}
where
\begin{align*}
\mathcal{P}_v(s,it;\xi_v):=\int_{F_v^{\times}}W_v^*\left(\begin{pmatrix}
y_v\\
& 1
\end{pmatrix}
,s\right)\xi_v(y_v)|y_v|_v^{it}d^{\times}y_v. 
\end{align*}
Here
\begin{align*}
W_v^*(g_v,s):=W_v(g_v,s)\mu_v(\det g_v),
\quad g_v\in G(F_v).
\end{align*}

By \eqref{2.2}, this twisted Whittaker function $W_v^*(g_v,s)$ is given by
\begin{equation}\label{e7.3}
|\det g_v|_v^{1/2+s}
\int_{F_v^{\times}}\int_{F_v}
\Phi_v((a_v,c_va_v)g_v)\overline{\psi}_v(c_v)dc_v
\eta_v(a_v)|a_v|_v^{1+2s}d^{\times}a_v.
\end{equation}

\subsection{Archimedean Integrals}\label{sec7.1.1}
Suppose $v\in \Sigma_{F,\infty}$. Then, for $y\in F_v\simeq\mathbb{R}$,
\begin{align*}
W_v^*\left(\begin{pmatrix}
y\\
& 1
\end{pmatrix}
,s\right)=|y|^{1/2+s}
\int_{\mathbb{R}^{\times}}\int_{\mathbb{R}}
\Phi_v((ya,ca))e^{-2\pi i c}dc
\sgn(a)|a|^{1+2s}d^{\times}a.
\end{align*}
Let $A=|\tau_v|_v^{1/2}$ and $Q=1+2k+|s|$. By the definition of $\Phi_v(\cdot,\cdot)$, we obtain
\begin{multline}\label{7.3}
\mathcal{P}_v(s,it;\xi_v)=\int_{\mathbb{R}^{\times}}
\int_{\mathbb{R}^{\times}}\int_{\mathbb{R}}
(c-yiA)^{2k+1}e^{-2\pi a^2(c^2+y^2A^2)}e^{-2\pi i c}dc\\
|a|^{2k+2+2s}d^{\times}a\xi_v(y)|y|^{1/2+s+it}d^{\times}y.
\end{multline}
Write $\xi_v(y)=\sgn(y)^{j}|y|^{i\nu}$, with $j\in\{0,1\}$, and put $T:=t+\nu$. Define
\begin{align*}
\mathcal{J}_{k}^{-}(s,T)
&:=
\int_0^{\frac{\pi}{2}}
\sin^{-1/2+s-iT}\theta
\cos^{-1/2+s+iT}\theta
\sin((2k+1)\theta)d\theta,\\
\mathcal{J}_{k}^{+}(s,T)
&:=
\int_0^{\frac{\pi}{2}}
\sin^{-1/2+s-iT}\theta
\cos^{-1/2+s+iT}\theta
\cos((2k+1)\theta)d\theta.
\end{align*}

\begin{lemma}\label{lem7.2}
Suppose $v\in \Sigma_{F,\infty}$. Then
\begin{multline}\label{e7.4}
\frac{\mathcal{P}_v(s,it;\xi_v)}{\Gamma_{\mathbb C}(k+1+s)}=
2(-1)^{k+1}i
A^{-1/2-s-iT}
(2\pi)^{-1/2+s-iT}
\Gamma\left(1/2-s+iT\right) \\
\times
\begin{cases}
\displaystyle
\sin\left(\frac{\pi}{2}\left(1/2-s+iT\right)\right)
\mathcal{J}_{k}^{-}(s,T),
& j\equiv 0\pmod 2,\\[1em]
\displaystyle
\cos\left(\frac{\pi}{2}\left(1/2-s+iT\right)\right)
\mathcal{J}_{k}^{+}(s,T),
& j\equiv 1\pmod 2.
\end{cases}
\end{multline}
\end{lemma}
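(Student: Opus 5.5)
The plan is to evaluate the triple integral \eqref{7.3} directly. The idea is to separate the $a$-integral, which produces $\Gamma_{\mathbb C}(k+1+s)$, and then treat the $(c,y)$-integral in polar coordinates. There the radial variable gives a Mellin transform of an exponential phase, which yields the factor $\Gamma(1/2-s+iT)$, and the angular variable gives $\mathcal{J}_k^{\pm}(s,T)$.

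\emph{Step 1: rescaling.} I first work in a range of $(s,t)$ where everything converges absolutely, or after inserting a damping factor $e^{-\epsilon r}$ that is removed at the end by dominated convergence; the general statement then follows by analytic continuation in $s$. Substitute $y\mapsto y/A$. Since $\xi_v(y)|y|^{1/2+s+it}=\sgn(y)^j|y|^{1/2+s+iT}$, this pulls out the factor $A^{-1/2-s-iT}$. The integrand becomes
\[
(c-iy)^{2k+1}e^{-2\pi a^2(c^2+y^2)}e^{-2\pi i c}\,\sgn(y)^j|y|^{1/2+s+iT},
\]
and it carries the measures $d^{\times}y$ and $dc$.

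\emph{Step 2: the $a$-integral.} Interchange the order of integration and do the $a$-integral first. For fixed $r^2=c^2+y^2$,
\[
\int_{\mathbb R^\times}|a|^{2k+2+2s}e^{-2\pi a^2r^2}d^\times a=(2\pi)^{-k-1-s}\Gamma(k+1+s)\,r^{-2k-2-2s}=\tfrac12\Gamma_{\mathbb C}(k+1+s)\,r^{-2k-2-2s}.
\]
This already produces the normalizing factor $\Gamma_{\mathbb C}(k+1+s)$ on the left of \eqref{e7.4}.

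\emph{Step 3: polar coordinates.} Write $c=r\cos\theta$, $y=r\sin\theta$, so that $c-iy=re^{-i\theta}$ and $dc\,d^\times y=dr\,d\theta/\sin\theta$ in absolute value. The powers of $r$ combine to $r^{-1/2+s+iT}$, so the remaining integral is
\[
\int_0^{2\pi}e^{-i(2k+1)\theta}\sgn(\sin\theta)^j|\sin\theta|^{-1/2+s+iT}\int_0^\infty r^{-1/2+s+iT}e^{-2\pi i r\cos\theta}\,dr\,d\theta .
\]
The radial integral is a standard Mellin transform with $\alpha=1/2+s+iT$:
\[
\int_0^\infty r^{\alpha-1}e^{-2\pi i r\cos\theta}\,dr=\Gamma(\alpha)(2\pi|\cos\theta|)^{-\alpha}e^{-i\pi\alpha\,\sgn(\cos\theta)/2}.
\]
Here I expect to have to be careful about the exponent: the target has $\Gamma(1/2-s+iT)$ and $\cos^{-1/2+s+iT}\theta$. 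The discrepancy between $\alpha$ and $1/2-s+iT$ should be resolved by recounting how $|y|^{1/2+s}$ and $|a|^{1+2s}$ distribute between the $a$-integral and the Whittaker normalization, or by substituting $a\mapsto a^{-1}$ before Step 2. I will redo this exponent bookkeeping carefully, with the goal of matching the factor $(2\pi)^{-1/2+s-iT}$.

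\emph{Step 4: folding the angle.} Fold $\theta\in[0,2\pi)$ into the first quadrant using $\theta\mapsto\pi-\theta$, $\theta\mapsto-\theta$ and $\theta\mapsto\pi+\theta$. Under these, $e^{-i(2k+1)\theta}$ transforms by signs and conjugations, because $2k+1$ is odd. The factors $\sgn(\sin\theta)^j$ and $\sgn(\cos\theta)$ interact with the phase $e^{\mp i\pi\alpha/2}$.

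For $j$ even, the four contributions combine into
\[
-4i\,\sin\bigl(\tfrac{\pi}{2}(1/2-s+iT)\bigr)\int_0^{\pi/2}\sin((2k+1)\theta)\cdots\,d\theta,
\]
which gives $\mathcal{J}_k^-$. For $j$ odd they combine into a cosine, which gives $\mathcal{J}_k^+$. The sign $(-1)^{k+1}$ comes from the identity $e^{-i(2k+1)(\pi-\theta)}=-e^{i(2k+1)\theta}$ together with $i^{2k+1}$ factors.

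\emph{Main obstacle.} I expect the hardest part to be twofold. First, justifying the interchange of integrals, since the $r$-integral is only conditionally convergent; I will handle this via the damping factor $e^{-\epsilon r}$ and continuity as $\epsilon\to0$ for $0<\Re(1/2\pm s)<1$. Second, the sign and phase bookkeeping in the quadrant folding, which is what determines $(-1)^{k+1}i$ and the sine/cosine split.
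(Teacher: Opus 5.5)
Your route is the same as the paper's: do the $a$-integral first, pass to polar coordinates in the $(c,Ay)$-plane, take a Mellin transform in $r$, then fold the angle. Steps 1--2 are correct. However, Step 3 contains an arithmetic slip, and you then misdiagnose it. The radial exponent is not $-1/2+s+iT$. Collect the three powers of $r$:
\begin{itemize}
\item $r^{-2k-2-2s}$ from the $a$-integral;
\item $r^{2k+1}$ from $(c-iy)^{2k+1}$;
\item $r^{1/2+s+iT}$ from $|y|^{1/2+s+iT}$.
\end{itemize}
The measure $dc\,d^{\times}y=dr\,d\theta/|\sin\theta|$ contributes no power of $r$. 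The total is $r^{-1/2-s+iT}=r^{\beta-1}$ with $\beta=1/2-s+iT$.

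So there is no discrepancy to resolve. The Mellin formula with this $\beta$ is valid, since $0<\Re\beta<1$. It yields directly $\Gamma(1/2-s+iT)$, $(2\pi)^{-1/2+s-iT}$, $|\cos\theta|^{-1/2+s-iT}$, and the phase $e^{-\pi i\beta\,\sgn(\cos\theta)/2}$, which is what later produces $\sin(\pi\beta/2)$ and $\cos(\pi\beta/2)$.

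The remedies you suggest would not work. Formula \eqref{7.3} is fixed, so ``redistributing'' $|y|^{1/2+s}$ and $|a|^{1+2s}$ would change the integral being computed. The substitution $a\mapsto a^{-1}$ preserves $d^{\times}a$ and cannot change the value. As written, with your $\alpha=1/2+s+iT$, Step 4 would give $\sin(\frac{\pi}{2}(1/2+s+iT))$ and the wrong powers of $\sin\theta$ and $\cos\theta$. It therefore does not match the expression you then write down.

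The last step is also blurred. Folding the four quadrants uses $e^{-i(2k+1)(\pi\mp\theta)}=-e^{\pm i(2k+1)\theta}$. Over $(0,\pi/2)$ it gives the weight $F(\theta)=\sin^{-1/2+s+iT}\theta\cos^{-1/2+s-iT}\theta$, multiplied by:
\begin{itemize}
\item $-4i\sin(\pi\beta/2)\cos((2k+1)\theta)$ for $j$ even;
\item $-4i\cos(\pi\beta/2)\sin((2k+1)\theta)$ for $j$ odd.
\end{itemize}
So for $j$ even the folding produces a cosine, not a sine. To reach $\mathcal{J}_k^{\mp}(s,T)$, whose weight is $\sin^{-1/2+s-iT}\theta\cos^{-1/2+s+iT}\theta$, you need the additional reflection $\theta\mapsto\pi/2-\theta$. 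This swaps the two exponents. Since $2k+1$ is odd, it also sends $\cos((2k+1)\theta)$ to $(-1)^k\sin((2k+1)\theta)$, and $\sin((2k+1)\theta)$ to $(-1)^k\cos((2k+1)\theta)$.

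The constant is then $\frac{1}{2}\cdot(-4i)\cdot(-1)^k=2(-1)^{k+1}i$. The factor $(-1)^k$ comes from this reflection, not from the folding identity. With these two corrections your argument coincides with the paper's proof.
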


\begin{proof}
The integral \eqref{7.3} is
\begin{multline*}
\mathcal{P}_v(s,it;\xi_v)=\int_{\mathbb{R}^{\times}}
\int_{\mathbb{R}^{\times}}\int_{\mathbb{R}}
(c-yiA)^{2k+1}e^{-2\pi a^2(c^2+y^2A^2)}e^{-2\pi i c}dc\\
|a|^{2k+2+2s}d^{\times}a\sgn(y)^{j}|y|^{1/2+s+iT}d^{\times}y. 
\end{multline*}
Since $d^{\times}a=da/|a|$, the $a$-integral equals
\begin{align*}
\int_{\mathbb{R}^{\times}}
e^{-2\pi a^2(c^2+y^2A^2)}
|a|^{2k+2+2s}d^{\times}a
=
(2\pi)^{-k-1-s}
\Gamma(k+1+s)
(c^2+y^2A^2)^{-k-1-s}.
\end{align*}
Using $\Gamma_{\mathbb C}(k+1+s)=2(2\pi)^{-k-1-s}\Gamma(k+1+s)$, we obtain
\begin{equation}\label{7.4}
\frac{\mathcal{P}_v(s,it;\xi_v)}{\Gamma_{\mathbb C}(k+1+s)}
=\frac{1}{2}
\int_{\mathbb R^{\times}}\int_{\mathbb R}
\frac{(c-yiA)^{2k+1}e^{-2\pi ic}}{(c^2+y^2A^2)^{k+1+s}}dc
\sgn(y)^j |y|^{1/2+s+iT}d^{\times}y.
\end{equation}

We use polar coordinates in the $(c,Ay)$-plane:
\begin{align*}
c=r\cos\theta,\quad Ay=r\sin\theta,
\quad r>0,\quad 0\leq \theta<2\pi.
\end{align*}
Then $c-yiA=re^{-i\theta}$, $c^2+y^2A^2=r^2$, and $dcdy=A^{-1}rdrd\theta$. Since $y=A^{-1}r\sin\theta$, we have
\begin{align*}
|y|^{1/2+s+iT}d^{\times}y\,dc
=
A^{-1/2-s-iT}
r^{1/2+s+iT}
|\sin\theta|^{-1/2+s+iT}dr\,d\theta.
\end{align*}
Also
\begin{align*}
\frac{(c-yiA)^{2k+1}}{(c^2+y^2A^2)^{k+1+s}}
=
r^{-1-2s}e^{-i(2k+1)\theta}.
\end{align*}
Substitution in \eqref{7.4} gives
\begin{multline*}
\frac{\mathcal{P}_v(s,it;\xi_v)}{\Gamma_{\mathbb C}(k+1+s)}
=
1/2 A^{-1/2-s-iT}
\int_0^\infty r^{-1/2-s+iT}
\int_0^{2\pi}
e^{-i(2k+1)\theta}e^{-2\pi ir\cos\theta}\\
\times \sgn(\sin\theta)^j
|\sin\theta|^{-1/2+s+iT}d\theta\,dr.
\end{multline*}
Put $\beta:=1/2-s+iT$. The $r$-integral is
\begin{align*}
\int_0^\infty r^{\beta-1}e^{-2\pi ir\cos\theta}dr
=
\Gamma(\beta)(2\pi|\cos\theta|)^{-\beta}
e^{-\frac{\pi i\beta}{2}\sgn(\cos\theta)},
\end{align*}
initially in a convergent range and then by meromorphic continuation. Hence
\begin{multline}\label{7.5}
\frac{\mathcal{P}_v(s,it;\xi_v)}{\Gamma_{\mathbb C}(k+1+s)}
=
1/2 A^{-1/2-s-iT}
(2\pi)^{-1/2+s-iT}
\Gamma(1/2-s+iT) \\
\times
\int_0^{2\pi}
e^{-i(2k+1)\theta}
\sgn(\sin\theta)^j
|\sin\theta|^{-1/2+s+iT}
|\cos\theta|^{-1/2+s-iT}
e^{-\frac{\pi i(1/2-s+iT)}{2}\sgn(\cos\theta)}
d\theta.
\end{multline}

Let $n=2k+1$ and, for $0<\theta<\pi/2$, set
\begin{align*}
F(\theta):=
\sin^{-1/2+s+iT}\theta
\cos^{-1/2+s-iT}\theta.
\end{align*}
Splitting the angular integral in \eqref{7.5} into four quadrants, and using that $n$ is odd, gives
\begin{align*}
&\int_0^{2\pi}
e^{-in\theta}
\sgn(\sin\theta)^j
|\sin\theta|^{-1/2+s+iT}
|\cos\theta|^{-1/2+s-iT}
e^{-\frac{\pi i\beta}{2}\sgn(\cos\theta)}
d\theta \\
&\quad =
\int_0^{\frac{\pi}{2}}
F(\theta)
\Bigg[
e^{-in\theta}
\left(
e^{-\frac{\pi i\beta}{2}}
-(-1)^j e^{\frac{\pi i\beta}{2}}
\right)
+
e^{in\theta}
\left(
(-1)^j e^{-\frac{\pi i\beta}{2}}
-e^{\frac{\pi i\beta}{2}}
\right)
\Bigg]d\theta.
\end{align*}
If $j\equiv 0\pmod 2$, the expression in brackets is $-4i\sin(\pi\beta/2)\cos(n\theta)$, whereas if $j\equiv 1\pmod 2$, it is $-4i\cos(\pi\beta/2)\sin(n\theta)$. Therefore
\begin{multline}\label{7.6}
\frac{\mathcal{P}_v(s,it;\xi_v)}{\Gamma_{\mathbb C}(k+1+s)}
=
-2i
A^{-1/2-s-iT}
(2\pi)^{-1/2+s-iT}
\Gamma\left(1/2-s+iT\right) \\
\times
\begin{cases}
\displaystyle
\sin\left(\frac{\pi}{2}\left(1/2-s+iT\right)\right)
\int_0^{\frac{\pi}{2}}F(\theta)\cos((2k+1)\theta)d\theta,
& j\equiv 0\pmod 2,\\[1em]
\displaystyle
\cos\left(\frac{\pi}{2}\left(1/2-s+iT\right)\right)
\int_0^{\frac{\pi}{2}}F(\theta)\sin((2k+1)\theta)d\theta,
& j\equiv 1\pmod 2.
\end{cases}
\end{multline}

We now rewrite the angular integrals in the stated form. In the case $j\equiv 0\pmod 2$, the change of variables $\theta\mapsto \pi/2-\theta$ gives
\begin{align*}
\int_0^{\frac{\pi}{2}}F(\theta)\cos((2k+1)\theta)d\theta
=
(-1)^k
\mathcal{J}_{k}^{-}(s,T).
\end{align*}
In the case $j\equiv 1\pmod 2$, the same change of variables gives
\begin{align*}
\int_0^{\frac{\pi}{2}}F(\theta)\sin((2k+1)\theta)d\theta
=
(-1)^k
\mathcal{J}_{k}^{+}(s,T).
\end{align*}
Substituting these identities into \eqref{7.6} gives \eqref{e7.4}.
\end{proof}

\begin{prop}\label{prop7.3}
Let $0<\delta<1/2$ and $|\Re(s)|\leq \delta$. Let $m\geq 0$ be an integer. Then
\begin{equation}\label{7.8}
\frac{\mathcal{P}_v(s,it;\xi_v)}{\Gamma_{\mathbb C}(k+1+s)}
\ll_{m,\delta}
\frac{\left(
Q^{-1/4+\delta}\mathbf{1}_{|T|\leq 2Q}
+
(1+|T|)^{-m}\mathbf{1}_{|T|>2Q}
\right)
}{A^{1/2+\Re(s)}
(1+|T-\Im(s)|)^{\Re(s)}
}.
\end{equation}
\end{prop}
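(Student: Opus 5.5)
We start from the explicit formula of Lemma \ref{lem7.2}, which already isolates the factor $A^{-1/2-s-iT}$, whose modulus is $A^{-1/2-\Re(s)}$. What remains is to bound
\begin{align*}
G(s,T):=(2\pi)^{-1/2+s-iT}\Gamma(1/2-s+iT)\cdot \sin\text{ or }\cos\Big(\tfrac{\pi}{2}(1/2-s+iT)\Big)
\end{align*}
and the angular integrals $\mathcal{J}_k^{\pm}(s,T)$ separately, and then multiply the bounds.

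\emph{The Gamma--trigonometric prefactor.} By Stirling, for $|\Re(s)|\le\delta<1/2$,
\begin{align*}
|\Gamma(1/2-s+iT)|\asymp (1+|T-\Im s|)^{-\Re(s)}e^{-\pi|T-\Im s|/2}.
\end{align*}
The sine or cosine of $\frac{\pi}{2}(1/2-s+iT)$ has size $\ll e^{\pi|T-\Im s|/2}$, and these exponentials cancel. Hence
\begin{align*}
G(s,T)\ll_\delta (1+|T-\Im(s)|)^{-\Re(s)}.
\end{align*}
This produces exactly the denominator in \eqref{7.8}. Near the pole $1/2-s+iT=0$ we use that $|\Re(1/2-s)|\geq 1/2-\delta>0$, so no poles occur.

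\emph{The angular integrals.} Write
\begin{align*}
\sin^{-1/2+s-iT}\theta\cos^{-1/2+s+iT}\theta=(\sin\theta\cos\theta)^{-1/2+s}e^{-iT\log\tan\theta}
\end{align*}
and $\sin((2k+1)\theta)$, $\cos((2k+1)\theta)$ as combinations of $e^{\pm i n\theta}$ with $n=2k+1$. Each $\mathcal{J}_k^{\pm}$ is then a sum of oscillatory integrals with phase
\begin{align*}
\phi_\pm(\theta)=\pm n\theta - T\log\tan\theta,\qquad \phi_\pm'(\theta)=\pm n - \frac{2T}{\sin 2\theta}.
\end{align*}
Since $|2T/\sin2\theta|\geq 2|T|$, when $|T|>2Q\geq 2n$ the derivative satisfies $|\phi'|\gg|T|/\sin2\theta$, so there is no stationary point. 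We then integrate by parts $m$ times, after a smooth dyadic partition near the endpoints $0$ and $\pi/2$. The amplitude $(\sin\theta\cos\theta)^{-1/2+s}$ is integrable because $\Re(-1/2+s)>-1$. In the dyadic piece $\theta\sim 2^{-j}$, each integration by parts gains a factor $(|T|)^{-1}$. This yields $(1+|T|)^{-m}$, with constants depending on $m,\delta$, and the contributions summed over $j$ converge geometrically.

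For $|T|\le 2Q$, stationary points $\sin 2\theta_0=2T/(\pm n)$ may exist, and we need the saving $Q^{-1/4+\delta}$. Near $\theta_0$ we have $\phi''(\theta_0)=4T\cos2\theta_0/\sin^2 2\theta_0$. The second-derivative test gives a contribution
\begin{align*}
\ll|\phi''|^{-1/2}(\sin2\theta_0)^{-1/2+\Re s}.
\end{align*}
Away from stationary points, the first-derivative test and the endpoint regions $\theta\lesssim 1/n$ contribute
\begin{align*}
\ll\int_0^{1/n}\theta^{-1/2+\Re s}\,d\theta\ll n^{-1/2-\Re s},
\end{align*}
and similarly near $\pi/2$. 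Combining these pieces, we aim for a uniform bound $\ll Q^{-1/4+\delta}$. The worst case is a degenerate stationary point, where $\cos2\theta_0\approx0$, i.e.\ $|T|\approx n/2$. There we use a third-derivative (van der Corput) estimate, with $|\phi'''|\asymp n$, which gives $n^{-1/3}$ times the amplitude. This is better than $n^{-1/4+\delta}$ once the amplitude factor is accounted for via $(\sin 2\theta_0)^{\Re s}$. When $|T|$ is small, $\sin2\theta_0\approx 2|T|/n$ is small, and we obtain
\begin{align*}
\ll (|T|/n)^{-1/2+\Re s}\cdot \big(\sin^2 2\theta_0/|T|\big)^{1/2}= (|T|/n)^{\Re s}\,|T|^{-1/2}.
\end{align*}
Together with the trivial bound for $|T|\le1$, this should again be $\ll Q^{-1/4+\delta}$, using $|\Re s|\le\delta$. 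The loss $Q^{\delta}$ absorbs the factor $(n/|T|)^{|\Re s|}$.

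\emph{Main obstacle.} The main difficulty is the uniform treatment of the stationary-phase regime $|T|\le 2Q$. This includes coalescence of the stationary point with the endpoints when $|T|$ is small, and with the degenerate point when $|T|\approx n/2$, while keeping the exponent $-1/4+\delta$ and never worse. If the direct estimates become too messy, the fallback is to express $\mathcal{J}_k^{\pm}$ through the beta-type formula
\begin{align*}
\int_0^{\pi/2}\sin^{a}\theta\cos^{b}\theta\, e^{\pm in\theta}\,d\theta
\end{align*}
as a ${}_2F_1$ or a finite sum of Gamma ratios. One would then bound those quotients with Stirling. The decay for $|T|>2Q$ would then come from the Gamma factors $\Gamma(\frac{\beta\pm n}{2})^{-1}$.
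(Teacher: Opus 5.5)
Your outer structure is the paper's: Lemma~\ref{lem7.2}, Stirling for $\Gamma(1/2-s+iT)$ against the sine/cosine (this is exactly \eqref{7.10}), and then a three-regime oscillatory analysis of $\mathcal{J}_k^{\pm}(s,T)$. The gap is in that last step. You treat $(\sin\theta\cos\theta)^{-1/2+s}$ as a non-oscillating amplitude, but it contains $(\sin\theta\cos\theta)^{i\Im(s)}$. Here $\Im(s)$ is unrestricted, and $Q=1+2k+|s|$ grows with it, so your analysis is only valid for $\Im(s)=O(1)$.

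For $|T|>2Q$, differentiating this amplitude on $\theta\asymp 2^{-j}$ costs $|s|/\theta$ against $|\phi'|\asymp|T|/\theta$. So each integration by parts gains only $O(|s|/|T|)$, which is at most $1/2$ but not $O(|T|^{-1})$; you get no decay in $|T|$ when $|T|\asymp|\Im(s)|$.

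For $|T|\le 2Q$, the true phase derivative is $\pm n-2T/\sin 2\theta+2\Im(s)\cot 2\theta$. Your stationary points $\sin 2\theta_0=\pm 2T/n$ and your degeneracy threshold $|T|\approx n/2$ are therefore wrong once $\Im(s)$ is large. For instance, take $k=0$, $|T|\le 1$ and $|\Im(s)|$ large. Your phase $\pm\theta-T\log\tan\theta$ then has no large parameter and you get only $O(1)$. The statement requires $O(|\Im(s)|^{-1/4+\delta})$; the truth is $O(|\Im(s)|^{-1/2})$, coming from a stationary point near $\theta=\pi/4$ created by the $\Im(s)$-term.

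This also means you have misidentified where the exponent $-1/4$ comes from. With $\Im(s)$ bounded, your worst case $|T|\approx n/2$ gives $n^{-1/3}$. The exponent $-1/4$ is instead forced by $T\approx\pm\Im(s)$ with $|\Im(s)|\gg n$. At $T=\Im(s)$ the phase of the integrand is exactly $2\Im(s)\log\cos\theta$, so the logarithmic singularity at $\theta=0$ cancels. For fixed $k$ one finds $\mathcal{J}_k^{+}(s,\Im(s))\asymp\int_0^{\infty}\theta^{-1/2+\Re(s)}e^{-i\Im(s)\theta^2}d\theta\asymp|\Im(s)|^{-1/4-\Re(s)/2}$: a degenerate stationary point colliding with the endpoint.

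The paper's remedy is to put $\Im(s)$ into the phase. With $x=\log\tan\theta$, the amplitude is $(2\cosh x)^{-1/2-\Re(s)}$ and the phase is $\Psi_\pm(x)=-Tx-\Im(s)\log(2\cosh x)\pm n\arctan(e^x)$. One has $\Psi_\pm'=F_\pm-T$, $\Psi_\pm''=hB_\pm$, and the identity $F_\pm^2+B_\pm^2=Q'^2$ with $Q'=(\Im(s)^2+n^2/4)^{1/2}\asymp Q$. This places the transition at $|T|\approx Q'$ rather than $n/2$. On blocks $j\le|x|\le j+1$ it yields a first/second/third-derivative trichotomy, with $|\Psi_\pm'''|\gg Q'e^{-2j}$ in the transition range. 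Summing the van der Corput bounds over $j$ gives exactly $Q'^{-1/4+\delta}$.

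A smaller slip: your small-$|T|$ stationary-point bound should be $n^{-1/2}(|T|/n)^{\Re(s)}$, not $(|T|/n)^{\Re(s)}|T|^{-1/2}$. As written it gives no saving at $|T|\asymp 1$, so ``this should again be $\ll Q^{-1/4+\delta}$'' does not follow from it.
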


\begin{proof}
We use the notation of Lemma \ref{lem7.2}. By Stirling's formula and the elementary bounds for sine and cosine in vertical strips, we have
\begin{equation}\label{7.10}
\left|
\Gamma\left(1/2-s+iT\right)F_1(s,T)
\right|
\ll_{\delta}
(1+|T-\Im(s)|)^{-\Re(s)},
\end{equation}
where
\begin{align*}
F_1(s,T):=
\left|\sin\left(\frac{\pi}{2}\left(1/2-s+iT\right)\right)\right|
+
\left|\cos\left(\frac{\pi}{2}\left(1/2-s+iT\right)\right)\right|.
\end{align*}
Combining \eqref{7.10} with Lemma \ref{lem7.2}, and applying the bounds for $\mathcal{J}_{k}^{\pm}(s,T)$ from Lemmas \ref{lemA.1}, \ref{lemA.2}, and \ref{lemA.3} in Appendix \textsection\ref{App}, we obtain \eqref{7.8}. Here, as usual, the factors $\sin(n\theta)$ and $\cos(n\theta)$ are written as linear combinations of $e^{\pm in\theta}$.
\end{proof}

\subsection{Non-Archimedean Integrals}

Let $v\in \Sigma_{F,\fin}$ and $\pi_v:=|\cdot|_v^s\boxplus \eta_v|\cdot|_v^{-s}$. By \eqref{e7.3}, we have
\begin{multline}\label{6.3}
\mathcal{P}_v(s,it;\xi_v)=\int_{F_v^{\times}}
\int_{F_v^{\times}}\int_{F_v}
\Phi_v(y_v,c_v)\overline{\psi}_v(c_va_v)dc_v\\
\xi_v\eta_v(a_v)|a_v|_v^{1/2-s+it}d^{\times}a_v\xi_v(y_v)|y_v|_v^{1/2+s+it}d^{\times}y_v.
\end{multline}

Let $r_{\xi_v}$ be the conductor exponent of $\xi_v$. 
\subsubsection{The case $v\nmid \mathfrak{f}\mathfrak{D}_{E}$}

Substituting the definition of $\Phi_v(\cdot,\cdot)$ into \eqref{6.3} gives
\begin{multline}\label{6.4}
\mathcal{P}_v(s,it;\xi_v)=q_v^{d_{E_v}/2}
\int_{\mathfrak{a}_v^{-1}-\{0\}}\int_{F_v^{\times}}\int_{\mathcal{O}_v}
\overline{\psi}_v(c_va_v)dc_v\xi_v\eta_v(a_v)\\
|a_v|_v^{1/2-s+it}d^{\times}a_v\xi_v(y_v)|y_v|_v^{1/2+s+it}d^{\times}y_v.
\end{multline}
Since $\eta_v$ is unramified, $\mathcal{P}_v(s,it;\xi_v)$ vanishes unless $\xi_v$ is unramified. More precisely, 
\begin{equation}\label{6.5}
\frac{\mathcal{P}_v(s,it;\xi_v)}{L_v(1/2+it,\pi_v\otimes\xi_v)}=\overline{\xi}_v(\varpi_v^{e_v(\mathfrak{a})})q_v^{(1/2+s+it)e_v(\mathfrak{a})}q_v^{\frac{d_{E_v}-d_{F_v}}{2}}\mathbf{1}_{r_{\xi_v}=0}.
\end{equation}

\subsubsection{The case $v\mid \mathfrak{D}_{E}$}

In parallel with \eqref{6.4}, we have
\begin{multline}\label{6.6}
\mathcal{P}_v(s,it;\xi_v)=q_v^{d_{E_v}/2}\int_{\mathcal{O}_v-\{0\}}
\int_{F_v^{\times}}\int_{\mathcal{O}_v^{\times}}\overline{\psi}_v(c_va_v)\eta_v(c_v)dc_v\\
\xi_v\eta_v(a_v)|a_v|_v^{1/2-s+it}d^{\times}a_v\xi_v(y_v)|y_v|_v^{1/2+s+it}d^{\times}y_v.
\end{multline}
Since $\eta_v$ has conductor exponent $1$, the $c_v$-integral vanishes unless $e_v(a_v)=1$. Hence, by \eqref{6.6} and a change of variables,
\begin{multline}\label{7.21}
\mathcal{P}_v(s,it;\xi_v)=\overline{\xi}_v\eta_v(\varpi_v)q_v^{d_{E_v}/2+1/2-s+it}
\int_{\mathcal{O}_v^{\times}}\overline{\psi}_v(\varpi_v^{-1}c_v)\eta_v(c_v)dc_v\\
\int_{\mathcal{O}_v^{\times}}\xi_v(a_v)d^{\times}a_v\int_{\mathcal{O}_v-\{0\}}\xi_v(y_v)|y_v|_v^{1/2+s+it}d^{\times}y_v.
\end{multline}
By definition,
\begin{equation}\label{7.22}
\int_{\mathcal{O}_v^{\times}}\overline{\psi}_v(\varpi_v^{-1}c_v)\eta_v(c_v)dc_v=q_v^{-1/2}\varepsilon(1/2,\eta_v,\psi_v).
\end{equation}
Here $\varepsilon(1/2,\eta_v,\psi_v)$ denotes the local root number. Therefore, \eqref{7.21} and \eqref{7.22} imply
\begin{equation}\label{fc7.21}
\frac{\mathcal{P}_v(s,it;\xi_v)}{L_v(1/2+it,\pi_v\otimes\xi_v)}=\overline{\xi}_v\eta_v(\varpi_v)q_v^{d_{E_v}/2-s+it}\varepsilon(1/2,\eta_v,\psi_v)\mathbf{1}_{r_{\xi_v}=0}.
\end{equation}
Here we used the fact that $L_v(1/2+it,\pi_v\otimes\xi_v)=L_v(1/2+it,\xi_v)$.

\subsubsection{Suppose $v\mid \mathfrak{f}$}
By the definition of $\mathfrak{f}$, we have $v\nmid 2$ and $\mu_v$ is quadratic. Since $v\nmid 2$, the group $1+\mathfrak p_v$ is pro-$p$ with $p\neq 2$. Hence every quadratic character of $F_v^{\times}$ is trivial on $1+\mathfrak p_v$. As $\mu_v$ is ramified, its conductor exponent is therefore exactly $1$, i.e., $e_v(\mathfrak{f})=1$. 
 
\begin{prop}\label{prop7.5}
Let $0<\delta<1/2$ and $|\Re(s)|\leq \delta$. Then 
\begin{equation}\label{f7.21}
\frac{\mathcal{P}_v(s,it;\xi_v)}{L_v(1/2+it,\pi_v\otimes\xi_v)}\ll_{\delta} q_v^{-\Re(s)}\mathbf{1}_{r_{\xi_v}=0}+q_v^{-1/2-\Re(s)}\mathbf{1}_{r_{\xi_v}=1}.	
\end{equation}	
\end{prop}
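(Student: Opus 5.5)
The plan is to compute $\mathcal{P}_v(s,it;\xi_v)$ directly from \eqref{6.3}, doing the inner $c_v$-integral first and then the $a_v$- and $y_v$-integrals. Substituting the definition of $\Phi_v$ at $v\mid\mathfrak f$, the inner integral is
\begin{align*}
\widehat{\Phi}_v(y_v,a_v):=\int_{\mathcal{O}_v}\mu_v(c_v^2-y_v^2\tau_v)\mathbf{1}_{\mathcal{O}_v^{\times}}(c_v^2-y_v^2\tau_v)\overline{\psi}_v(c_va_v)dc_v,\quad y_v\in\mathcal{O}_v .
\end{align*}
We know $e_v(\mathfrak f)=1$ (shown just above), $v\nmid 2$, and $\tau_v\in\mathcal{O}_v^{\times}$ is a unit by the coprimality in \eqref{4.3}. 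Hence the integrand depends only on $c_v \bmod \mathfrak p_v$. It follows that $\widehat{\Phi}_v(y_v,\cdot)$ is supported on $\mathfrak p_v^{-1}$ (up to the harmless shift by $d_{F_v}$). It is also invariant under $a_v\mapsto a_vu$ for $u\in 1+\mathfrak p_v$. Consequently the $a_v$-integral against $\xi_v\eta_v$ vanishes unless $r_{\xi_v}\le 1$, which accounts for the indicator functions in \eqref{f7.21}.

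I would then split according to $e_v(y_v)$.

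\emph{Case $e_v(y_v)\ge1$.} The unit condition forces $c_v\in\mathcal{O}_v^{\times}$. Since $\mu_v$ kills $1+\mathfrak p_v$, we get $\mu_v(c_v^2-y_v^2\tau_v)=\mu_v(c_v^2)=1$. So $\widehat{\Phi}_v$ is the familiar $\int_{\mathcal{O}_v^\times}\overline\psi_v(c_va_v)dc_v$, which equals $1-q_v^{-1}$ on $\mathcal{O}_v$, equals $-q_v^{-1}$ on $e_v(a_v)=-1$, and vanishes elsewhere.
\begin{itemize}
\item If $\xi_v$ is unramified, the $a_v\in\mathcal{O}_v$ part together with the $y_v$-geometric series reproduces (a piece of) $L_v(1/2+it,\pi_v\otimes\xi_v)$, which we divide out. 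The shell $e_v(a_v)=-1$ contributes $q_v^{-1}\cdot q_v^{1/2-\Re(s)}$.
\item If $\xi_v$ is ramified, these pieces die by orthogonality on $\mathcal{O}_v^\times$.
\end{itemize}

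\emph{Case $y_v\in\mathcal{O}_v^{\times}$.} Here everything reduces to finite-field sums.
\begin{itemize}
\item For $\xi_v$ unramified, the shell $e_v(a_v)=-1$ gives $q_v^{1/2-\Re(s)}$ times $q_v^{-2}\sum_{y,c\in\mathbb F_v}\mu_v(c^2-y^2\tau_v)\psi(-c\,\cdot)$-type sums.
\item For $r_{\xi_v}=1$, the $a_v$-integral is a normalized Gauss sum of size $q_v^{-1/2}$ times $\overline{\xi}_v(c_v)$. The problem then reduces to bounding
\begin{align*}
\Big|\sum_{y\in\mathbb F_v^{\times}}\sum_{c\in\mathbb F_v^\times}\mu_v(c^2-y^2\tau_v)\overline{\xi}_v(c)\xi_v(y)\Big| .
\end{align*}
Substituting $c=yx$ and using homogeneity, this becomes $(q_v-1)\sum_x\mu_v(x^2-\tau_v)\overline\xi_v(x)$. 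Weil's bound for mixed multiplicative character sums gives $O(\sqrt{q_v})$, since $x^2-\tau_v$ has distinct nonzero roots ($\tau_v$ a nonsquare or square unit, $v\nmid2$).
\end{itemize}

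Bookkeeping of measures is as follows: each residue class contributes weight $q_v^{-1}$ in $y_v$ and in $c_v$, the Gauss sum contributes $q_v^{-1/2}$, and $|a_v|^{1/2-s}=q_v^{1/2-\Re(s)}$. This gives $q_v^{-2}\cdot q_v^{-1/2}\cdot q_v^{1/2-\Re(s)}\cdot O(q_v^{3/2})=O(q_v^{-1/2-\Re(s)})$, with $L_v=1$ in this case. Similarly, in the unramified case the unit-$y_v$ piece is $q_v^{-2}\cdot q_v^{1/2-\Re(s)}\cdot O(q_v^{3/2})=O(q_v^{-\Re(s)})$. Dividing by the unramified $L_v(1/2+it,\pi_v\otimes\xi_v)$, whose inverse is bounded uniformly for $|\Re(s)|\le\delta<1/2$, then yields \eqref{f7.21}.

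The main obstacle is the unit-$y_v$ contribution, i.e. obtaining square-root cancellation in the two-variable character sums. In the ramified case I would first try the homogeneity reduction above to a one-variable Weil sum. In the unramified case I would evaluate $\sum_c\mu_v(c^2-y^2\tau_v)\psi(-ca)$ after averaging over $y$, again reducing to Weil-type (or Salié-type) sums of size $O(\sqrt{q_v})$. Care is needed with the normalization of $dc_v$ when $d_{F_v}>0$, but this only shifts supports and does not affect the exponents.
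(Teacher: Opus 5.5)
Your argument is correct, and its core agrees with the paper's proof. Both use $1+\mathfrak p_v$-invariance (conductor exponent one) to force $r_{\xi_v}\le 1$. Both reduce the unit--unit region to a finite-field sum, apply the homogeneity substitution $c=yx$, and finish with Weil's bound for $\sum_x\mu_v(x^2-\tau_v)\overline{\xi}_v(x)$.

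The organization is what differs. The paper interchanges the $a_v$- and $c_v$-integrals and substitutes $a_v\mapsto c_v^{-1}a_v$. This pulls out a single Gauss-type factor $G(\xi_v)$ and leaves a double integral in $(y_v,c_v)$, which it splits into the regions $c_v\in\mathfrak p_v$, $y_v\in\mathfrak p_v$, and both units. It kills $r_{\xi_v}\ge 2$ by invariance in $y_v$. You keep the additive Fourier transform $\widehat\Phi_v(y_v,a_v)$, use invariance in $a_v$ instead, and split by $e_v(y_v)$ and by the two shells $a_v\in\mathcal O_v$ and $e_v(a_v)=-1$.

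The paper's factorization isolates all additive-character dependence in one factor, but it then needs the correct size of that factor. For $r_{\xi_v}=0$ one actually has $G(\xi_v)\ll_\delta 1$. The bound $q_v^{1/2-\Re(s)}$ stated in \eqref{eq7.28}, multiplied by the factor $q_v^{-1/2-\Re(s)}$ appearing in \eqref{7.27} and \eqref{e7.28}, would only give $q_v^{-2\Re(s)}$. That is too weak when $\Re(s)<0$. Your shell-by-shell computation produces the right sizes directly and also avoids justifying the interchange of integrals.

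Two small points in your write-up.
\begin{itemize}
\item In the unramified case with $y_v\in\mathcal O_v^\times$, you omitted the piece $a_v\in\mathcal O_v$. It equals $-q_v^{-1}L_v(1/2-s+it,\xi_v\eta_v)$, because $\sum_{c\in\mathbb F_v}\mu_v(c^2-y_v^2\tau_v)=-1$. Say so explicitly: the trivial bound $O(1)$ would not give $q_v^{-\Re(s)}$ when $\Re(s)>0$.
\item For the shell $e_v(a_v)=-1$ in that same case, you do not need Weil or Sali\'e bounds. Integrating $a_v$ over $\varpi_v^{-1}\mathcal O_v^\times$ against the unramified $\xi_v\eta_v$ turns $\overline{\psi}_v$ into a Ramanujan sum. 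This gives exactly $q_v^{1/2-s+it}\,\overline{\xi_v\eta_v}(\varpi_v)\,\frac{q_v\mu_v(-\tau_v)+1}{q_v(q_v-1)}=O(q_v^{-1/2-\Re(s)})$.
\end{itemize}
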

\begin{proof}
Substituting the definition of $\Phi_v(\cdot,\cdot)$ into \eqref{6.3} gives 
\begin{equation}\label{7.24}
\mathcal{P}_v(s,it;\xi_v)=\int_{\mathcal{O}_v-\{0\}}H(y_v)\xi_v(y_v)|y_v|_v^{1/2+s+it}d^{\times}y_v,
\end{equation}
where  
\begin{align*}
H(y_v):=\int_{F_v^{\times}}\int_{\mathcal{O}_v}
\mu_v(c_v^2-y_v^2\tau_v)\mathbf{1}_{\mathcal{O}_v^{\times}}(c_v^2-y_v^2\tau_v)
\overline{\psi}_v(c_va_v)dc_v
\xi_v\eta_v(a_v)|a_v|_v^{\frac{1}{2}-s+it}d^{\times}a_v .
\end{align*}

For $u_v\in 1+\mathfrak{p}_v$, $y_v\in \mathcal{O}_v$, and $\tau_v\in \mathcal{O}_v^{\times}$, we have  
\begin{equation}\label{e7.22}
\mu_v(c_v^2-u_v^2y_v^2\tau_v)=\mu_v(c_v^2-y_v^2\tau_v),
\end{equation}
and 
\begin{equation}\label{7.23}
c_v^2-y_v^2\tau_v\in\mathcal{O}_v^{\times}
\Longleftrightarrow
c_v^2-u_v^2y_v^2\tau_v\in\mathcal{O}_v^{\times}.
\end{equation}

It follows from \eqref{e7.22} and \eqref{7.23} that  
\begin{equation}\label{e7.24}
H(u_vy_v)=H(y_v),\quad u_v\in 1+\mathfrak{p}_v.
\end{equation}

Combining \eqref{e7.24} with \eqref{7.24} and using the orthogonality of characters, we obtain
\begin{equation}\label{7.25}
\mathcal{P}_v(s,it;\xi_v)\equiv 0\quad \text{unless}\quad  r_{\xi_v}\leq 1=e_v(\mathfrak{f}). 
\end{equation}

By the properties of Ramanujan sums and Gauss sums, the $a_v$-integral is supported in a compact subset of $F_v-\{0\}$ and hence converges for $\Re(s)<1/2$. We may therefore interchange the integrals in \eqref{7.24} and make the change of variables $a_v\mapsto c_v^{-1}a_v$, obtaining  
\begin{multline}\label{7.26}
\mathcal{P}_v(s,it;\xi_v)=G(\xi_v)\int_{\mathcal{O}_v-\{0\}}
\int_{\mathcal{O}_v}
\mu_v(c_v^2-y_v^2\tau_v)\mathbf{1}_{\mathcal{O}_v^{\times}}(c_v^2-y_v^2\tau_v)\\
\overline{\xi}_v\eta_v(c_v)|c_v|_v^{-1/2+s-it}dc_v\xi_v(y_v)|y_v|_v^{1/2+s+it}d^{\times}y_v,
\end{multline}
where 
\begin{align*}
G(\xi_v):=\int_{F_v^{\times}}\xi_v\eta_v(a_v)\overline{\psi}_v(a_v)|a_v|_v^{1/2-s+it}d^{\times}a_v.
\end{align*}

Again by the properties of Gauss sums and Ramanujan sums, we have 
\begin{equation}\label{eq7.28}
G(\xi_v)\mathbf{1}_{r_{\xi_v}\leq 1}\ll_{\delta} q_v^{1/2-\Re(s)}\mathbf{1}_{r_{\xi_v}=0}+q_v^{-\Re(s)}\mathbf{1}_{r_{\xi_v}=1}.
\end{equation}

We now decompose the double integral in \eqref{7.26} into the following regions.   
\begin{itemize}
\item If $y_v\in \mathcal{O}_v-\{0\}$ and $c_v\in \mathfrak{p}_v$, then 
\begin{align*}
\mu_v(c_v^2-y_v^2\tau_v)\mathbf{1}_{\mathcal{O}_v^{\times}}(c_v^2-y_v^2\tau_v)=\mu_v(y_v^2\tau_v)\mathbf{1}_{\mathcal{O}_v^{\times}}(y_v^2\tau_v)=\mathbf{1}_{\mathcal{O}_v^{\times}}(y_v).
\end{align*}

Thus the contribution of this region is
\begin{multline}\label{7.27}
G(\xi_v)\int_{\mathcal{O}_v^{\times}}
\int_{\mathfrak{p}_v}
\overline{\xi}_v\eta_v(c_v)|c_v|_v^{-1/2+s-it}dc_v\xi_v(y_v)d^{\times}y_v\\
=\overline{\xi}_v\eta_v(\varpi_v)G(\xi_v)
(1-q_v^{-1})
q_v^{-1/2-s+it}L_v(1/2+s-it,\overline{\xi}_v\eta_v)\mathbf{1}_{r_{\xi_v}=0}. 
\end{multline}  
	
\item If $y_v\in \mathfrak{p}_v-\{0\}$ and $c_v\in \mathcal{O}_v^{\times}$, then 
\begin{align*}
\mu_v(c_v^2-y_v^2\tau_v)\mathbf{1}_{\mathcal{O}_v^{\times}}(c_v^2-y_v^2\tau_v)=\mu_v(c_v^2)\mathbf{1}_{\mathcal{O}_v^{\times}}(c_v^2)=\mathbf{1}_{\mathcal{O}_v^{\times}}(c_v).
\end{align*}

Hence the contribution of this region is
\begin{multline}\label{e7.28}
G(\xi_v)\int_{\mathfrak{p}_v-\{0\}}
\int_{\mathcal{O}_v^{\times}}
\overline{\xi}_v\eta_v(c_v)|c_v|_v^{-1/2+s-it}dc_v\xi_v(y_v)|y_v|_v^{1/2+s+it}d^{\times}y_v\\
=\xi_v(\varpi_v)G(\xi_v)q_v^{-1/2-s-it}L_v(1/2+s+it,\xi_v)\mathbf{1}_{r_{\xi_v}=0}.
\end{multline}  

\item It remains to consider $y_v\in \mathcal{O}_v^{\times}$ and $c_v\in \mathcal{O}_v^{\times}$. The corresponding contribution is
\begin{align*}
\mathcal{I}(\mu_v,\xi_v):=\frac{G(\xi_v)}{\zeta_{F_v}(1)}\int_{\mathcal{O}_v^{\times}}
\int_{\mathcal{O}_v^{\times}}
\mu_v(c_v^2-y_v^2\tau_v)\mathbf{1}_{\mathcal{O}_v^{\times}}(c_v^2-y_v^2\tau_v)
\overline{\xi}_v\eta_v(c_v)\xi_v(y_v)d^{\times}c_vd^{\times}y_v.
\end{align*}

In view of \eqref{7.25}, we may assume $r_{\xi_v}\leq 1$. Let $\mathbb{F}_v^{\times}:=\mathcal{O}_v^{\times}/(1+\mathfrak{p}_v)$. Since the conductor exponents of $\mu_v$ and $\xi_v$ are at most $1$, reduction modulo $1+\mathfrak{p}_v$ gives  
\begin{equation}\label{7.29}
\mathcal{I}(\mu_v,\xi_v)=\frac{G(\xi_v)}{q_v^2\zeta_{F_v}(1)}\sum_{\substack{\alpha,\, \beta\in \mathbb{F}_v^{\times}\\
\alpha^2\neq \beta^2\tau_v}}
\mu_v(\alpha^2-\beta^2\tau_v)
\overline{\xi}_v\eta_v(\alpha)\xi_v(\beta).
\end{equation}

Making the change of variables $\alpha\mapsto \beta\alpha$ and using that $\eta_v$ is unramified, we simplify \eqref{7.29} to 
\begin{equation}\label{7.30}
\mathcal{I}(\mu_v,\xi_v)=\frac{(q_v-1)G(\xi_v)}{q_v^2\zeta_{F_v}(1)}\mathcal{J}(\mu_v,\xi_v)=q_v^{-1}(1-q_v^{-1})^2G(\xi_v)\mathcal{J}(\mu_v,\xi_v).
\end{equation}
where
\begin{align*}
\mathcal{J}(\mu_v,\xi_v):=\sum_{\alpha\in \mathbb{F}_v^{\times}:\ 
\alpha^2\neq \tau_v}
\mu_v(\alpha^2-\tau_v)
\overline{\xi}_v\eta_v(\alpha).
\end{align*}

Since $\mu_v$ has conductor exponent $1$, $\mathcal{J}(\mu_v,\xi_v)$ is a nontrivial multiplicative character sum on $\mathbb{F}_v^{\times}$. Equivalently, its summand is the trace function of a nontrivial rank-one Kummer sheaf on $\mathbb{A}^1_{\mathbb{F}_v}$ with bounded conductor. Weil's bound therefore gives
\begin{equation}\label{7.31}
\mathcal{J}(\mu_v,\xi_v)
\ll q_v^{1/2}.
\end{equation}
\end{itemize}

The bound \eqref{f7.21} now follows from \eqref{7.26}, \eqref{eq7.28}, \eqref{7.27}, \eqref{e7.28}, \eqref{7.30} and \eqref{7.31}.
\end{proof}

\subsection{Proof of Proposition \ref{prop7.1}}
Let $\pi=|\cdot|^s\boxplus \eta|\cdot|^{-s}$, where $|\Re(s)|\leq \delta$ with $0<\delta<1/2$. Substituting \eqref{7.2} into \eqref{7.1} gives 
\begin{multline}\label{7.40}
J_{\mathrm{Dual}}^{\heartsuit}(h(\cdot,s))=\frac{\Gamma_{\mathbb C}(k+1+s)^{d}}{2\pi\underset{\lambda=1}{\Res}\ \zeta_F(\lambda)}\sum_{\xi\in \widehat{C}_F}\int_{\mathbb{R}}L(1/2+it,\pi\otimes\xi)\\
\prod_{v\in \Sigma_{F,\infty}}\frac{\mathcal{P}_v(s,it;\xi_v)}{\Gamma_{\mathbb C}(k+1+s)}\prod_{v\in \Sigma_{F,\fin}}\frac{\mathcal{P}_v(s,it;\xi_v)}{L_v(1/2+it,\pi_v\otimes\xi_v)}dt.
\end{multline}

By Proposition \ref{prop7.3},
\begin{equation}\label{e7.41}
\prod_{v\in \Sigma_{F,\infty}}\frac{\mathcal{P}_v(s,it;\xi_v)}{\Gamma_{\mathbb C}(k+1+s)}\ll_{m,\delta} |\tau_{\infty}|_{\infty}^{-1/4-\Re(s)/2}\Omega_{m}(s,k;\xi_{\infty}|\cdot|_{\infty}^{it}),
\end{equation}
where 
\begin{align*}
\Omega_{m}(s,k;\xi_{\infty}|\cdot|_{\infty}^{it}):=\prod_{v\in \Sigma_{F,\infty}}\frac{Q^{-1/4+\delta}\mathbf{1}_{|\boldsymbol{\nu}_{\xi_v,t}|\leq 2Q}+(1+|\boldsymbol{\nu}_{\xi_v,t}|)^{-m}\mathbf{1}_{|\boldsymbol{\nu}_{\xi_v,t}|>2Q}}{(1+|\boldsymbol{\nu}_{\xi_v,t}-\Im(s)|)^{\Re(s)}}.
\end{align*}
Using \eqref{6.22} to evaluate $|\tau_{\infty}|_{\infty}$ in \eqref{e7.41}, we obtain  
\begin{equation}\label{7.41}
\prod_{v\in \Sigma_{F,\infty}}\frac{\mathcal{P}_v(s,it;\xi_v)}{\Gamma_{\mathbb C}(k+1+s)}\ll_{m,\delta} D_{E/F}^{-\frac{1+2\Re(s)}{4}}N_F(\mathfrak{a})^{-\frac{1}{2}-\Re(s)}\Omega_{m}(s,k;\xi_{\infty}|\cdot|_{\infty}^{it}).
\end{equation}  

Combining \eqref{6.5}, \eqref{fc7.21}, and Proposition \ref{prop7.5}, we also have  
\begin{equation}\label{7.42}
\prod_{v\in \Sigma_{F,\fin}}\frac{\mathcal{P}_v(s,it;\xi_v)}{L_v(1/2+it,\pi_v\otimes\xi_v)}\ll_{F,\delta,\varepsilon} D_{E/F}^{\frac{1}{2}+\delta}N_F(\mathfrak{a})^{\frac{1}{2}+\Re(s)}\sum_{\mathfrak{m}\supseteq \mathfrak{f}
}\frac{\mathbf{1}_{\xi\in \widehat{C}_F(\mathfrak{m})}}{N_F(\mathfrak{m})^{1/2-\delta}}. 
\end{equation}
 
Substituting \eqref{7.41} and \eqref{7.42} into \eqref{7.40} yields 
\begin{multline}\label{7.44}
J_{\mathrm{Dual}}^{\heartsuit}(h(\cdot,s))\ll_{F,m,\delta,\varepsilon} |\Gamma_{\mathbb C}(k+1+s)|^{d}D_{E/F}^{1/4+2\delta}\sum_{\mathfrak{m}\supseteq \mathfrak{f}}N_F(\mathfrak{m})^{-1/2+\delta}\\
\sum_{\xi\in \widehat{C}_F(\mathfrak{m})}\int_{\mathbb{R}}\big|L(1/2+it,\xi)L(1/2+it,\eta\xi)\big|\cdot \Omega_{m}(s,k;\xi_{\infty}|\cdot|_{\infty}^{it})dt.
\end{multline}
In particular, the implied constant is independent of $\mathfrak{a}$.

Let $M=D_{E/F}N_F(\mathfrak{f})$. From the definition of $\Omega_{m}(s,k;\xi_{\infty}|\cdot|_{\infty}^{it})$, we have
\begin{multline}\label{e7.44}
\Omega_{m}(s,k;\xi_{\infty}|\cdot|_{\infty}^{it})\ll_{F,m,\delta,\varepsilon}Q^{-(1/4-2\delta)d}M^{\varepsilon\delta d}\mathbf{1}_{|\boldsymbol{\nu}_{\xi_v,t}|\leq QM^{\varepsilon},\ \forall\, v\in \Sigma_{F,\infty}}\\
+(QM^{\varepsilon})^{-\frac{m}{2}}\sum_{\emptyset\neq S\subseteq \Sigma_{F,\infty}}\prod_{v\in S}|\boldsymbol{\nu}_{\xi_v,t}|^{\delta-\frac{m}{2}}\mathbf{1}_{|\boldsymbol{\nu}_{\xi_v,t}|\geq QM^{\varepsilon}}\prod_{w\in \Sigma_{F,\infty}-S}(QM^{\varepsilon})^{\delta}\mathbf{1}_{|\boldsymbol{\nu}_{\xi_w,t}|\leq QM^{\varepsilon}}.
\end{multline}

Substituting \eqref{e7.44} into \eqref{7.44} and applying the convexity bound for $L$-functions, 
\begin{multline}\label{7.45}
J_{\mathrm{Dual}}^{\heartsuit}(h(\cdot,s))\ll_{F,m,\delta,\varepsilon} \frac{D_{E/F}^{\frac{1}{4}+2\delta}|\Gamma_{\mathbb C}(k+1+s)|^{d}}{\big(Q^{\frac{1}{4}-2\delta}M^{-\varepsilon\delta}\big)^{d}}\mathcal{J}_{\mathrm{Split}}(s,\eta;\mathfrak{f})\\
+|\Gamma_{\mathbb C}(k+1+s)|^{d}(QM^{\varepsilon})^{-\frac{m}{2}+\delta d+10}\mathcal{E}(s,\eta;\mathfrak{f}),
\end{multline}
where $\mathcal{J}_{\mathrm{Split}}(s,\eta;\mathfrak{f})$ is defined by \eqref{e7.2} and $\mathcal{E}(s,\eta;\mathfrak{f})$ is defined by 
\begin{align*}
\sum_{\mathfrak{m}\supseteq \mathfrak{f}}\sum_{\emptyset\neq S\subseteq \Sigma_{F,\infty}}\sum_{\xi\in \widehat{C}_F(\mathfrak{m})}\int_{\mathbb{R}}\prod_{v\in S}|\boldsymbol{\nu}_{\xi_v,t}|^{\frac{1-m}{2}+10}\mathbf{1}_{|\boldsymbol{\nu}_{\xi_v,t}|\geq QM^{\varepsilon}}
\prod_{w\in \Sigma_{F,\infty}-S}\mathbf{1}_{|\boldsymbol{\nu}_{\xi_w,t}|\leq QM^{\varepsilon}}dt.
\end{align*}

We decompose both the characters $\xi$ and the $t$-ranges according to dyadic intervals for the quantities $|\boldsymbol{\nu}_{\xi_v,t}|$. Taking $m$ sufficiently large, for instance $m=10^{4}\floor{\varepsilon^{-2}}$, gives
\begin{equation}\label{7.47}
(QM^{\varepsilon})^{-\frac{m}{2}+\delta d+10}\mathcal{E}(s,\eta;\mathfrak{f})\ll_{F,\delta,\varepsilon} Q^{-100}N_F(\mathfrak{f}\mathfrak{D}_{E})^{-100}.
\end{equation}

Therefore, \eqref{eq7.3} follows from \eqref{7.45} and \eqref{7.47}. 

Now we prove \eqref{eq7.4}. By the Burgess bound \cite[Corollary 1.2]{Yan26}, 
\begin{multline}\label{7.51}
\mathcal{J}_{\mathrm{Split}}(s,\eta;\mathfrak{f})\ll_{F,\varepsilon} (QN_F(\mathfrak{f}\mathfrak{D}_{E})^{\varepsilon})^{(3/16+\varepsilon)d}\sum_{\mathfrak{m}\supseteq \mathfrak{f}}N_F(\mathfrak{m})^{-1/2+\delta}\\
N_F(\mathfrak{m}\mathfrak{D}_{E/F})^{3/16+\varepsilon}\sum_{\xi\in \widehat{C}_F(\mathfrak{m})}\int_{\mathbf{B}(\boldsymbol{\nu}_{\xi_{\infty},t},QN_F(\mathfrak{f}\mathfrak{D}_{E})^{\varepsilon})}\big|L(1/2+it,\xi)\big|dt.
\end{multline} 

We use the following standard mean-value estimate for Hecke $L$-functions. Let $R\geq 1$. Then, for any $\varepsilon>0$,
\begin{equation}\label{7.50}
\sum_{\xi\in \widehat{C}_F(\mathfrak{m})}
\int_{\mathbf{B}(\boldsymbol{\nu}_{\xi_{\infty},t},R)}
\big|L(1/2+it,\xi)\big|^2dt
\ll_{F,\varepsilon}
\left(N_F(\mathfrak{m})R^{d}\right)^{1+\varepsilon}.
\end{equation}
This is the Lindel\"{o}f-on-average bound for Hecke characters of conductor $\mathfrak{m}$ and archimedean parameters bounded by $R$. It follows from the approximate functional equation, which reduces the central values to Dirichlet polynomials of length
$\ll (N_F(\mathfrak{m})R^{d})^{1/2+\varepsilon}$, together with the large sieve inequality for Hecke characters over $F$.

Applying the Cauchy--Schwarz inequality to the innermost sum and integral in \eqref{7.51}, and then using \eqref{7.50} with $R=QN_F(\mathfrak{f}\mathfrak{D}_{E})^{\varepsilon}$, gives
\begin{multline}\label{7.52}
\mathcal{J}_{\mathrm{Split}}(s,\eta;\mathfrak{f})\ll_{F,\varepsilon} (QN_F(\mathfrak{f}\mathfrak{D}_{E})^{\varepsilon})^{(3/16+\varepsilon)d}\sum_{\mathfrak{m}\supseteq \mathfrak{f}}N_F(\mathfrak{m})^{-1/2+\delta}\\
N_F(\mathfrak{m}\mathfrak{D}_{E/F})^{\frac{3}{16}+\varepsilon}N_F(\mathfrak{m})^{1+\varepsilon}Q^{d}.
\end{multline}

Therefore, \eqref{eq7.4} follows from \eqref{eq7.3} and \eqref{7.52}.

\section{The First Moment and Nonvanishing}\label{sec8}

Let $0<\delta<1/2$ and $|\Re(s)|\leq \delta$.  Combining Propositions
\ref{prop3.2}, \ref{prop6.1}, \ref{prop6.2}, and \ref{prop7.1} with Theorem
\ref{thm3.1}, and then dividing both sides by
$2^{-d}\pi^{d}\Gamma_{\mathbb C}(k+1+s)^{d}$,
we obtain
\begin{multline}\label{e8.1}
\sum_{\chi\in X_{E/F}^{\mathrm{un}}(k,\boldsymbol{\Phi})}\frac{L(1/2+s,\widetilde{\mu}\chi)}{L(1,\eta)}
=\pi^{-d}D_{E/F}^{1/2}D_F^{1/2}L^{(\mathfrak{f})}(1+2s,\eta)\\
+\frac{\epsilon_{\mu,k}\pi^{-d}
D_{E/F}^{1/2-2s}D_F^{1/2-2s}}{2^{2sd}
N_F(\mathfrak{f})^{2s}
}L^{(\mathfrak{f})}(1-2s,\eta)+L(1,\eta)^{-1}\mathcal{E}_{\mu}(s,k).
\end{multline}
Here $\mathcal{E}_{\mu}(s,k)$ is a holomorphic function satisfying \eqref{e1.3}. 

Theorem \ref{thma} follows from \eqref{e8.1} after rewriting the two main terms
in terms of $\mathcal{M}_{\mathfrak{f}}(s,k)$ and
$\mathcal{M}_{\mathfrak{f}}(-s,k)$, as defined in \eqref{a}.
Corollaries \ref{cor1.4} and \ref{cor1.6} then follow from Theorem \ref{thma},
the class number formula
\begin{align*}
h_{E/F}
=2(2\pi)^{-d}D_{E/F}^{1/2}D_F^{1/2}L(1,\eta),
\end{align*}
and the elementary calculation
\begin{multline*}
\left.\frac{d}{ds}\right|_{s=0}
\frac{\mathcal{M}_{\mathfrak{f}}(s,k)+\epsilon_{\mu,k}\mathcal{M}_{\mathfrak{f}}(-s,k)}
{2^{sd}D_{E/F}^{s}N_F(\mathfrak f)^{s}D_F^{s}}
=2\pi^{-d}D_{E/F}^{1/2}D_F^{1/2}
\Big[(1-\epsilon_{\mu,k})(L^{(\mathfrak f)})'(1,\eta)\\
-\epsilon_{\mu,k}L^{(\mathfrak f)}(1,\eta)
\log(2^{d}D_{E/F}D_FN_F(\mathfrak f))\Big].
\end{multline*}

Finally, by \cite[Corollary 1.7]{Yan26b}, we have
\begin{equation}\label{8.2}
L(1/2+it,\widetilde{\mu}\chi)\ll_{F,\varepsilon} N_F(\mathfrak{f}\mathfrak{D}_{E})^{\frac{1}{2}-\frac{1-2\vartheta}{12}+\varepsilon}(1+|t|)^{\frac{d}{4}-\frac{(1-2\theta)d}{24}+\varepsilon}.
\end{equation}
Theorem \ref{thmc} follows by combining Corollaries \ref{cor1.4} and
\ref{cor1.6} with \eqref{8.2}.  The restriction $d\geq 14$ is obtained by using
the worst available bound $\vartheta=7/64$.

\appendix

\section{Oscillatory Integral Estimates}\label{App}

In this appendix, we collect several elementary oscillatory integral estimates used in \textsection\ref{sec7.1.1}. Throughout, let
\begin{align*}
n:=2k+1,\quad Q':=\left(\Im(s)^2+n^2/4\right)^{1/2},
\end{align*}
and assume that $0<\delta<1/2$ and $|\Re(s)|\leq \delta$.

\subsection{A Change of Variables}\label{app:change-of-variables}

We use the change of variables $x=\log\tan\theta$. Then
\begin{align*}
\sin\theta=e^{x/2}(2\cosh x)^{-1/2},
\quad
\cos\theta=e^{-x/2}(2\cosh x)^{-1/2},
\quad
d\theta=(2\cosh x)^{-1}dx.
\end{align*}
After expanding $\sin(n\theta)$ and $\cos(n\theta)$ into exponentials, the integrals $\mathcal{J}_{k}^{\pm}(s,T)$ reduce to a linear combination of 
\begin{align*}
I_{\pm}:=\int_{\mathbb R}\beta_s(x)e^{i\Psi_{\pm}(x)}dx,
\end{align*}
where $\beta_s(x):=(2\cosh x)^{-1/2-\Re(s)}$, and
\begin{align*}
\Psi_{\pm}(x):=
-Tx-\Im(s)\log(2\cosh x)\pm n\arctan(e^x).
\end{align*}
We have
\begin{align*}
\Psi_{\pm}'(x)
&=
-T-\Im(s)\tanh x
\pm \frac{n}{2\cosh x},\\
\Psi_{\pm}''(x)
&=
-\Im(s)(\cosh x)^{-2}
\mp \frac{n}{2}\tanh x(\cosh x)^{-1}.
\end{align*}

Put $u:=\tanh x$ and $h:=(\cosh x)^{-1}$. Then $u^2+h^2=1$. Define
\begin{align*}
F_{\pm}(x):=-\Im(s)u\pm 2^{-1}nh,
\quad
B_{\pm}(x):=-\Im(s)h\mp 2^{-1}nu.
\end{align*}
Then
\begin{align*}
\Psi_{\pm}'(x)=F_{\pm}(x)-T,
\quad
\Psi_{\pm}''(x)=hB_{\pm}(x),
\end{align*}
and
\begin{align*}
F_{\pm}(x)^2+B_{\pm}(x)^2=Q'^2.
\end{align*}
Moreover,
\begin{align*}
\Psi_{\pm}'''(x)
=
-uhB_{\pm}(x)-h^2F_{\pm}(x).
\end{align*}

\subsection{Derivative Bounds for the Amplitude}\label{app:amplitude-bounds}

For every $r\geq 0$, one has
\begin{align*}
\beta_s^{(r)}(x)
=
(2\cosh x)^{-1/2-\Re(s)}P_{r,\Re(s)}(\tanh x),
\end{align*}
where $P_{r,\Re(s)}$ is a polynomial in $\tanh x$ whose degree and coefficients are bounded in terms of $r$ and $\delta$. Hence
\begin{align*}
\beta_s^{(r)}(x)
\ll_{r,\delta}
(\cosh x)^{-1/2+\delta}.
\end{align*}
In particular, if $j\leq |x|\leq j+1$ and $H_j:=e^{-j}$, then $\beta_s^{(r)}(x)\ll_{r,\delta}H_j^{1/2-\delta}$. 

\subsection{The Non-degenerate Stationary Range}
\begin{lemma}\label{lemA.1}
Let $0<\varepsilon<1$. Suppose that $|T|\leq (1-\varepsilon)Q'$. Then
\begin{align*}
I_{\pm}\ll_{\delta,\varepsilon}Q'^{-1/2+\delta}.
\end{align*}
\end{lemma}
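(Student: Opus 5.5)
We bound $I_{\pm}$ by stationary phase, split dyadically in $|x|$ according to the decay of the amplitude. The relevant structure is $\Psi_\pm'=F_\pm-T$ and $\Psi_\pm''=hB_\pm$, with $F_\pm^2+B_\pm^2=Q'^2$. Where $\Psi_\pm'$ vanishes we have $F_\pm=T$. The hypothesis $|T|\le(1-\varepsilon)Q'$ then gives
\[
|B_\pm|=(Q'^2-T^2)^{1/2}\ge c_\varepsilon Q'.
\]
Hence stationary points are non-degenerate, with $|\Psi''_\pm|\asymp_\varepsilon hQ'$.

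We split $\mathbb{R}$ by a smooth partition of unity into the pieces $j\le|x|\le j+1$, $j\ge0$. On each piece $h\asymp H_j=e^{-j}$, and by \S\ref{app:amplitude-bounds} the amplitude satisfies $\beta_s^{(r)}\ll H_j^{1/2-\delta}$. On piece $j$ we estimate in two regimes, using the trivial bound $H_j^{1/2-\delta}$ throughout.

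First, if $H_jQ'\le 1$ we use only the trivial bound. Summing over these $j$ gives
\[
\sum_{H_j\le Q'^{-1}}H_j^{1/2-\delta}\ll Q'^{-1/2+\delta},
\]
a geometric sum.

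Second, if $H_jQ'\ge1$ we separate two cases.
\begin{enumerate}
\item[(i)] If $|\Psi'_\pm|\ge c'_\varepsilon Q'H_j$ on the piece, we integrate by parts repeatedly. Each step gains $(Q'H_j)^{-1}$, since $\Psi''/\Psi'^2$ and $\Psi'''$ are controlled by $hQ'/(Q'H_j)^2$ and $h Q'$. The piece therefore contributes $H_j^{1/2-\delta}(Q'H_j)^{-1}$.
\item[(ii)] Otherwise $|F_\pm-T|$ is small relative to $Q'$, so $F_\pm\approx T$ and $|B_\pm|\gg_\varepsilon Q'$ on a subinterval, giving $|\Psi''|\gg H_jQ'$. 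The second-derivative test (van der Corput with amplitude, plus a bounded-variation estimate on $\beta_s$) gives the bound $H_j^{1/2-\delta}(H_jQ')^{-1/2}$.
\end{enumerate}

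In both cases the piece contributes at most $H_j^{-\delta}Q'^{-1/2}$. Summing over $j$ with $H_j\ge Q'^{-1}$, i.e. $j\le\log Q'$, gives at most $Q'^{\delta}\cdot Q'^{-1/2}$ (dominated by the largest $j$). This matches the target $Q'^{-1/2+\delta}$.

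The main obstacle is making the case split in (ii) uniform. We need to ensure the set where $|\Psi'|$ is small is a bounded number of intervals on which $|B_\pm|\gg Q'$ genuinely holds. Near a point where $F_\pm=T$, $F_\pm$ moves at rate $|F_\pm'|=|\Psi''|\le hQ'$. Thus on the region where $|F_\pm-T|\le \tfrac{\varepsilon}{4}Q'$ we still have $|F_\pm|\le(1-\tfrac{3\varepsilon}{4})Q'$, hence $|B_\pm|\gg_\varepsilon Q'$. Moreover, $B_\pm$ has constant sign there, since it varies continuously and is bounded away from zero. This makes $\Psi'$ monotone on each component, so the number of components per dyadic piece is bounded.
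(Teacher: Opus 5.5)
Your proposal is correct and follows the paper's proof: unit blocks $j\le|x|\le j+1$ with $h\asymp H_j$, integration by parts where $|\Psi_\pm'|$ is large, van der Corput's second-derivative test where $|\Psi_\pm''|=h|B_\pm|\gg_\varepsilon Q'H_j$, and summation of $H_j^{1/2-\delta}\min\{1,(Q'H_j)^{-1/2}\}$ over $j$; your weaker integration-by-parts threshold $Q'H_j$ and the smooth partition are cosmetic. The paper simply asserts that each block splits into boundedly many subintervals. Your ingredients give this once you add one observation: each interior component of $\{|F_\pm-T|\le\frac{\varepsilon}{4}Q'\}$ is a monotone full crossing of a band of width $\frac{\varepsilon}{2}Q'$ at speed $|F_\pm'|\le hQ'\ll H_jQ'$, hence has length $\gg\varepsilon$. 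Alternatively, $(F_\pm,B_\pm)'=h\,(B_\pm,-F_\pm)$, so $(F_\pm,B_\pm)$ sweeps exactly a half-circle and $F_\pm$ has at most one critical point.
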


\begin{proof}
Since $F_{\pm}(x)^2+B_{\pm}(x)^2=Q'^2$ and $|T|\leq (1-\varepsilon)Q'$, there exists $c=c(\varepsilon)>0$ such that, for every $x\in\mathbb R$,
\begin{align*}
|F_{\pm}(x)-T|\geq cQ'
\quad\text{or}\quad
|B_{\pm}(x)|\geq cQ'.
\end{align*}
Indeed, otherwise $F_{\pm}(x)$ would be within $cQ'$ of $T$ and $B_{\pm}(x)$ would be $O(cQ')$, contradicting $F_{\pm}(x)^2+B_{\pm}(x)^2=Q'^2$ if $c$ is chosen sufficiently small in terms of $\varepsilon$.

Decompose $\mathbb R$ into the intervals $j\leq |x|\leq j+1$, $j\geq 0$. On each such block, $h\asymp H_j:=e^{-j}$. After subdividing the block into $O_{\varepsilon}(1)$ subintervals, we may assume that either
\begin{align*}
|\Psi_{\pm}'(x)|\gg_{\varepsilon} Q'
\end{align*}
throughout, or
\begin{align*}
|\Psi_{\pm}''(x)|=h|B_{\pm}(x)|\gg_{\varepsilon} Q'H_j
\end{align*}
throughout.

On subintervals of the first type, integration by parts gives
\begin{align*}
\int \beta_s(x)e^{i\Psi_{\pm}(x)}dx
\ll_{\delta,\varepsilon}
H_j^{1/2-\delta}Q'^{-1}.
\end{align*}
On subintervals of the second type, van der Corput's second derivative estimate gives
\begin{align*}
\int \beta_s(x)e^{i\Psi_{\pm}(x)}dx
\ll_{\delta,\varepsilon}
H_j^{1/2-\delta}(Q'H_j)^{-1/2}.
\end{align*}
Together with the trivial estimate, we obtain
\begin{align*}
\int_{j\leq |x|\leq j+1}
\beta_s(x)e^{i\Psi_{\pm}(x)}dx
\ll_{\delta,\varepsilon}
H_j^{1/2-\delta}
\min\{1,(Q'H_j)^{-1/2}\}.
\end{align*}
Summing over $j\geq 0$ gives
\begin{align*}
I_{\pm}\ll_{\delta,\varepsilon}
\sum_{j\geq 0}
H_j^{1/2-\delta}
\min\{1,(Q'H_j)^{-1/2}\}\ll_{\delta,\varepsilon}
Q'^{-1/2+\delta}.
\end{align*}
This proves the claim.
\end{proof}

\subsection{The Cubic Transition Range}\begin{lemma}\label{lemA.2}
Suppose that $(1-\varepsilon)Q'\leq |T|\leq (1+\varepsilon)Q'$. Then
\begin{align*}
I_{\pm}\ll_{\delta,\varepsilon}Q'^{-1/4+\delta}.
\end{align*}
\end{lemma}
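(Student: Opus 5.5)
The plan is to reuse the dyadic decomposition from the proof of Lemma \ref{lemA.1}. The one new ingredient is a third-derivative (van der Corput) estimate near points where $\Psi_{\pm}'$ and $\Psi_{\pm}''$ can vanish simultaneously. Decompose $\mathbb{R}$ into blocks $j\le |x|\le j+1$, on which $h\asymp H_j=e^{-j}$ and $\beta_s^{(r)}\ll H_j^{1/2-\delta}$. The aim is the block bound
\begin{align*}
\int_{j\le|x|\le j+1}\beta_s(x)e^{i\Psi_{\pm}(x)}dx\ll_{\delta,\varepsilon} H_j^{1/2-\delta}\min\{1,(Q'H_j)^{-1/3}\},
\end{align*}
possibly with a slightly better bound away from the degenerate point. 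Summing over $j$ then gives $\sum_j H_j^{1/2-\delta}\min\{1,(Q'H_j)^{-1/3}\}\ll Q'^{-1/3}$, which is stronger than the required $Q'^{-1/4+\delta}$.

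The key identity is the one already recorded: $F_{\pm}^2+B_{\pm}^2=Q'^2$, $\Psi_{\pm}'=F_{\pm}-T$, $\Psi_{\pm}''=hB_{\pm}$, and $\Psi_{\pm}'''=-uhB_{\pm}-h^2F_{\pm}$. Consider a subinterval on which $|\Psi_{\pm}''|=h|B_{\pm}|\le cQ'H_j$ for a small constant $c$. There $|B_{\pm}|\le cQ'$, so $|F_{\pm}|\ge Q'/2$. Hence $|\Psi_{\pm}'''|\ge h^2|F_{\pm}|-h|u||B_{\pm}|\gg Q'H_j^2$, once $c$ is small enough that the term $h|u||B_{\pm}|\le cQ'H_j$ is dominated. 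This needs care: the domination $h^2|F_\pm|\gg h|B_\pm|$ holds only where $|B_{\pm}|\le c'Q'h$. So I would refine the condition to $|B_{\pm}|\le c'Q'H_j$ rather than $|B_\pm|\le cQ'$. I would then split each block into $O_{\varepsilon}(1)$ subintervals, using monotonicity and a bounded number of sign changes of $F_{\pm}$, $B_{\pm}$ as functions of $u$ (they are trigonometric in a parameter angle, since $(u,h)$ lies on the unit circle). On each subinterval exactly one of the following holds:
(i) $|\Psi_{\pm}'|\gg Q'$, handled by integration by parts, giving $H_j^{1/2-\delta}Q'^{-1}$;
(ii) $|\Psi_{\pm}''|\gg Q'H_j^2$, handled by the second derivative test, giving $H_j^{1/2-\delta}(Q'H_j^2)^{-1/2}$;
(iii) $|\Psi_{\pm}'''|\gg Q'H_j^2$, handled by the third derivative test, giving $H_j^{1/2-\delta}(Q'H_j^2)^{-1/3}$.

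The main obstacle is the bookkeeping of the powers of $H_j$ in (ii) and (iii). Naively the losses $H_j^{-1}$ and $H_j^{-2/3}$ are too large for small $H_j$. Two facts should rescue this. First, the transition regime $|T|\approx Q'$ forces the degenerate point, where $F_{\pm}=T$ and $B_{\pm}=0$, to occur where $|u|$ is close to the angle determined by $T/Q'$. Second, for the part of the real line with $H_j\le Q'^{-\kappa}$ we can instead use the trivial bound $\sum_{H_j\le Q'^{-\kappa}}H_j^{1/2-\delta}\ll Q'^{-\kappa(1/2-\delta)}$. Choosing $\kappa=1/2$ makes this tail $\ll Q'^{-1/4+\delta/2}$. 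For the remaining blocks, with $H_j\ge Q'^{-1/2}$, estimate (iii) gives $H_j^{1/2-\delta}(Q'H_j^2)^{-1/3}\le Q'^{-1/3}H_j^{-1/6-\delta}\le Q'^{-1/4+\delta/2}$. There are $O(\log Q')$ such blocks, and the resulting logarithm is absorbed by the slack between $\delta/2$ and $\delta$ in the exponent. Estimates (i) and (ii) are handled similarly. Together these give $I_{\pm}\ll_{\delta,\varepsilon}Q'^{-1/4+\delta}$.
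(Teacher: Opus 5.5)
Your argument, as you actually carry it out in the last paragraph, is the paper's proof. It uses the same blocks $j\le |x|\le j+1$ and the same trichotomy, with thresholds $Q'$, $Q'H_j^2$, $Q'H_j^2$ for $|\Psi_{\pm}'|$, $|\Psi_{\pm}''|$, $|\Psi_{\pm}'''|$. Your corrected condition $|B_{\pm}|\le c'Q'H_j$ is exactly the paper's, and deducing $|F_{\pm}|\gg Q'$ from $F_{\pm}^2+B_{\pm}^2=Q'^2$ instead of from $|T|\geq (1-\varepsilon)Q'$ is a harmless variant. It also makes the same cut at $H_j=Q'^{-1/2}$, which gives $Q'^{-1/4+\delta/2}$.

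You should, however, delete the opening claim of a block bound $H_j^{1/2-\delta}\min\{1,(Q'H_j)^{-1/3}\}$ and a total of $Q'^{-1/3}$. Nothing in your argument gives $|\Psi_{\pm}'''|\gg Q'H_j$, and the claim is false.

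To see this, note that $B_{\pm}=0$ exactly where $\sinh x=\mp 2\Im(s)/n$, i.e.\ where $h=n/(2Q')$. So the location of the degenerate point is fixed by $\Im(s)/n$, not by $T/Q'$ as your ``first fact'' suggests. At that point $F_{\pm}=\pm Q'$ and $|\Psi_{\pm}'''|=h^2Q'=n^2/(4Q')$.

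Now take $\Re(s)=0$, $T=\pm Q'$ and $Q'=n^2$ (i.e.\ $\Im(s)\approx n^2$). Then this degenerate stationary point sits in a block with $H_j\asymp Q'^{-1/2}$. On that block $|\Psi_{\pm}'|\ll n^2/Q'\ll 1$, so the phase is essentially constant there. That block alone therefore contributes $\asymp H_j^{1/2}\asymp Q'^{-1/4}$, far more than $H_j^{1/2}(Q'H_j)^{-1/3}$. This is why the blockwise bound, and the lemma, stop at the exponent $-1/4$.
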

\begin{proof}
We use the same dyadic decomposition $j\leq |x|\leq j+1$, $j\geq 0$, and write $H_j=e^{-j}$. On this block, $h\asymp H_j$ and $\beta_s^{(r)}(x)\ll_{r,\delta}H_j^{1/2-\delta}$. 

We claim that, after subdividing the block into $O(1)$ subintervals, one of the following alternatives holds throughout each subinterval:
\begin{align}
|\Psi_{\pm}'(x)|&\gg Q', \label{A.1}\\
|\Psi_{\pm}''(x)|&\gg Q'H_j^2, \label{A.2}\\
|\Psi_{\pm}'''(x)|&\gg Q'H_j^2. \label{A.3}
\end{align}
Indeed, suppose that \eqref{A.1} and \eqref{A.2} both fail, with sufficiently small implicit constants. Since
\begin{align*}
\Psi_{\pm}'(x)=F_{\pm}(x)-T,
\quad
\Psi_{\pm}''(x)=hB_{\pm}(x),
\end{align*}
and $h\asymp H_j$, we have
\begin{align*}
|F_{\pm}(x)-T|\ll Q',
\quad
|B_{\pm}(x)|\ll Q'H_j.
\end{align*}
Choosing the implicit constant in \eqref{A.1} sufficiently small and using $|T|\geq (1-\varepsilon)Q'$, we get $|F_{\pm}(x)|\gg_{\varepsilon} Q'$. Therefore, from
\begin{align*}
\Psi_{\pm}'''(x)
=
-uhB_{\pm}(x)-h^2F_{\pm}(x),
\end{align*}
we get
\begin{align*}
|\Psi_{\pm}'''(x)|\gg_{\varepsilon} Q'H_j^2,
\end{align*}
after decreasing the implicit constants if necessary. This proves the pointwise trichotomy. The subdivision into $O(1)$ subintervals follows, as before, from the fact that after writing $u=\tanh x$ and $h=(1-u^2)^{1/2}$, the boundary equations have uniformly bounded algebraic complexity.

On subintervals satisfying \eqref{A.1}, integration by parts gives
\begin{align*}
\int \beta_s(x)e^{i\Psi_{\pm}(x)}dx
\ll_{\delta,\varepsilon}
H_j^{1/2-\delta}Q'^{-1}.
\end{align*}
On subintervals satisfying \eqref{A.2}, van der Corput's second derivative estimate gives
\begin{align*}
\int \beta_s(x)e^{i\Psi_{\pm}(x)}dx
\ll_{\delta,\varepsilon}
H_j^{1/2-\delta}(Q'H_j^2)^{-1/2}.
\end{align*}
On subintervals satisfying \eqref{A.3}, van der Corput's third derivative estimate gives
\begin{align*}
\int \beta_s(x)e^{i\Psi_{\pm}(x)}dx
\ll_{\delta,\varepsilon}
H_j^{1/2-\delta}(Q'H_j^2)^{-1/3}.
\end{align*}
Combining these bounds with the trivial estimate yields
\begin{align*}
\int_{j\leq |x|\leq j+1}
\beta_s(x)e^{i\Psi_{\pm}(x)}dx
\ll_{\delta,\varepsilon}
H_j^{1/2-\delta}
\min\{1,(Q'H_j^2)^{-1/3}\}.
\end{align*}
Summing over $j\geq 0$, we obtain
\begin{align*}
I_{\pm}
&\ll_{\delta,\varepsilon}
\sum_{j\geq 0}
H_j^{1/2-\delta}
\min\{1,(Q'H_j^2)^{-1/3}\}\\
&\ll_{\delta,\varepsilon}
Q'^{-1/3}
\sum_{H_j\geq Q'^{-1/2}}H_j^{-1/6-\delta}
+
\sum_{H_j<Q'^{-1/2}}H_j^{1/2-\delta}\ll_{\delta,\varepsilon}
Q'^{-1/4+\delta}.
\end{align*}
This proves the claim.
\end{proof}

\subsection{The Non-stationary Range}
\begin{lemma}\label{lemA.3}
Let $0<\varepsilon<1$. Suppose that $|T|\geq (1+\varepsilon)Q'$. Then, for every $m\geq 0$,
\begin{align*}
I_{\pm}\ll_{m,\delta,\varepsilon}Q'^{-m}.
\end{align*}
\end{lemma}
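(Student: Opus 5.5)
The plan is to show that the phase $\Psi_{\pm}$ has no stationary point, with derivative uniformly of size $\gg_\varepsilon |T|$, and then integrate by parts repeatedly. From the structure recorded in \S\ref{app:change-of-variables}, we have $\Psi_{\pm}'(x)=F_{\pm}(x)-T$ and $F_{\pm}(x)^2+B_{\pm}(x)^2=Q'^2$, so $|F_{\pm}(x)|\leq Q'$ for every $x\in\mathbb R$. Since $|T|\geq(1+\varepsilon)Q'$, this gives
\begin{align*}
|\Psi_{\pm}'(x)|\geq |T|-Q'\geq \tfrac{\varepsilon}{1+\varepsilon}|T|\gg_{\varepsilon} |T|\geq Q'
\end{align*}
uniformly in $x$.

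Next we control the higher derivatives of the phase. Since $u=\tanh x$ and $h=(\cosh x)^{-1}$ satisfy $u'=h^2$ and $h'=-uh$, every derivative of $F_{\pm}$ is a polynomial in $u,h$, linear in $(\Im(s),n)$ and divisible by $h$. Hence, for $r\geq 2$,
\begin{align*}
\Psi_{\pm}^{(r)}(x)\ll_r Q'h(x)\ll_r Q'(\cosh x)^{-1}.
\end{align*}
This is consistent with the formulas for $\Psi''_{\pm}$ and $\Psi'''_{\pm}$ given above. By \S\ref{app:amplitude-bounds}, we also have $\beta_s^{(r)}(x)\ll_{r,\delta}(\cosh x)^{-1/2+\delta}$.

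We then integrate by parts $m$ times using the operator $D f:=-\frac{d}{dx}\big(f/(i\Psi_{\pm}')\big)$. This gives $I_{\pm}=\int_{\mathbb R}(D^m\beta_s)e^{i\Psi_{\pm}}dx$, with no boundary terms because $\beta_s$ and its derivatives decay exponentially. The quantity $D^m\beta_s$ is a sum of terms of the form
\begin{align*}
\beta_s^{(a_0)}\prod_{i}\Psi_{\pm}^{(a_i)}\cdot(\Psi_{\pm}')^{-m-\ell},
\end{align*}
where $\ell$ is the number of factors $\Psi^{(a_i)}_{\pm}$ with $a_i\geq 2$, and $a_0+\sum_i(a_i-1)=m$. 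Each such term is bounded by
\begin{align*}
(\cosh x)^{-1/2+\delta}\,Q'^{\ell}\,|T|^{-m-\ell}\ll (\cosh x)^{-1/2+\delta}\,|T|^{-m},
\end{align*}
where we use $Q'\leq |T|$ and $h\leq 1$.

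Finally, since $\int_{\mathbb R}(\cosh x)^{-1/2+\delta}dx<\infty$ for $\delta<1/2$, it follows that $I_{\pm}\ll_{m,\delta,\varepsilon}|T|^{-m}\leq Q'^{-m}$. The resulting bound $\ll |T|^{-m}$ is in fact stronger, and it is the form used in Proposition \ref{prop7.3}.

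The only step requiring care is the combinatorial bookkeeping showing that each extra factor $\Psi^{(a)}_{\pm}$ with $a\geq 2$ costs at most $Q'$, which is paid for by an extra factor $(\Psi_{\pm}')^{-1}$ of size $\ll |T|^{-1}$. I would handle this by a simple induction on $m$, tracking this ratio at each step.
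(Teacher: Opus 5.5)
Your proof is correct and follows the paper's route. The paper also gets $|\Psi_{\pm}'|\geq |T|-|F_{\pm}|$ from $F_{\pm}^2+B_{\pm}^2=Q'^2$, uses $\Psi_{\pm}^{(r)}\ll_r Q'h$ for $r\geq 2$ and $\beta_s^{(r)}\ll_{r,\delta}(\cosh x)^{-1/2+\delta}$, and integrates by parts repeatedly; it merely organizes the bookkeeping over blocks $j\leq|x|\leq j+1$ with $h\asymp e^{-j}$ rather than globally, and your global version also avoids the block-endpoint terms the paper leaves implicit. The one substantive difference is that you keep $|\Psi_{\pm}'|\gg_{\varepsilon}|T|$ where the paper only records $\geq\varepsilon Q'$, so you get $I_{\pm}\ll|T|^{-m}$, which is the $(1+|T|)^{-m}$ decay that Proposition~\ref{prop7.3} actually needs for the $t$-integral, whereas the stated bound $Q'^{-m}$ alone does not give it.
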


\begin{proof}
Since $F_{\pm}(x)^2+B_{\pm}(x)^2=Q'^2$, we have $|F_{\pm}(x)|\leq Q'$ for every $x\in\mathbb R$. Therefore, if $|T|\geq (1+\varepsilon)Q'$, then
\begin{align*}
|\Psi_{\pm}'(x)|=|F_{\pm}(x)-T|
\geq |T|-|F_{\pm}(x)|
\geq \varepsilon Q'.
\end{align*}
Thus the phase has no stationary point in this range.

We again decompose $\mathbb R$ into the intervals $j\leq |x|\leq j+1$, $j\geq 0$, 
and write $H_j=e^{-j}$. On such a block we have
\begin{align*}
h\asymp H_j,\quad
\beta_s^{(r)}(x)\ll_{r,\delta}H_j^{1/2-\delta}.
\end{align*}
Moreover, for every $r\geq 2$,
\begin{align*}
\Psi_{\pm}^{(r)}(x)\ll_r Q'H_j.
\end{align*}
Indeed, every derivative of $\tanh x$ or $(\cosh x)^{-1}$ of positive order is $O_r(h)$, and hence is $O_r(H_j)$ on the block.

Let $J$ be one of these blocks. We integrate by parts repeatedly using the operator
\begin{align*}
\mathcal{L}:=\frac{1}{i\Psi_{\pm}'(x)}\frac{d}{dx},
\quad
\mathcal{L}e^{i\Psi_{\pm}(x)}=e^{i\Psi_{\pm}(x)}.
\end{align*}
After integrating by parts $m+1$ times, the derivative bounds above, together with $|\Psi_{\pm}'(x)|\gg_{\varepsilon} Q'$, give
\begin{align*}
\int_J \beta_s(x)e^{i\Psi_{\pm}(x)}dx
\ll_{m,\delta,\varepsilon}
H_j^{1/2-\delta}Q'^{-m-1}.
\end{align*}
Here the derivatives falling on $(\Psi_{\pm}'(x))^{-1}$ are harmless, since
\begin{align*}
\frac{\Psi_{\pm}^{(r)}(x)}{\Psi_{\pm}'(x)}
\ll_{r,\varepsilon}H_j
\end{align*}
for every $r\geq 2$.

Summing over $j\geq 0$, we obtain
\begin{align*}
I_{\pm}\ll_{m,\delta,\varepsilon}
Q'^{-m-1}
\sum_{j\geq 0}H_j^{1/2-\delta}\ll_{m,\delta,\varepsilon}Q'^{-m-1}.
\end{align*}
Since $Q'\gg 1$, this implies $I_{\pm}\ll_{m,\delta,\varepsilon}Q'^{-m}$. This proves the lemma.
\end{proof}

\bibliographystyle{alpha}

\bibliography{LY}

@book {Gro80,
    AUTHOR = {Gross, Benedict H.},
     TITLE = {Arithmetic on elliptic curves with complex multiplication},
    SERIES = {Lecture Notes in Mathematics},
    VOLUME = {776},
      NOTE = {With an appendix by B. Mazur},
 PUBLISHER = {Springer, Berlin},
      YEAR = {1980},
     PAGES = {iii+95},
      ISBN = {3-540-09743-0},
   MRCLASS = {10D25 (14K22)},
  MRNUMBER = {563921},
MRREVIEWER = {A.\ N.\ Andrianov},
}

@article{HPY25,
  title={The cubic moment of {$L$}-functions for specified local component families},
  author={Hu, Yueke and Petrow, Ian and Young, Matthew P},
  journal={arXiv preprint arXiv:2506.14741},
  year={2025}
}

@article{HY26,
  title={{R}ankin--{S}elberg Subconvexity via Spectral Reciprocity},
  author={Humphries, Peter and Yang, Liyang},
  journal={arXiv preprint arXiv:2606.11451},
  year={2026}
}

@article {IKM19,
    AUTHOR = {Iannuzzi, Arianna and Kim, Byoung Du and Masri, Riad and
              Mathers, Alexander and Ross, Maria and Tsai, Wei-Lun},
     TITLE = {Nonvanishing of {H}ecke {$L$}-functions and {B}loch-{K}ato
              {$p$}-{S}elmer groups},
   JOURNAL = {Math. Res. Lett.},
  FJOURNAL = {Mathematical Research Letters},
    VOLUME = {26},
      YEAR = {2019},
    NUMBER = {4},
     PAGES = {1145--1177},
      ISSN = {1073-2780,1945-001X},
   MRCLASS = {11F67 (11R11 11R23)},
  MRNUMBER = {4028114},
MRREVIEWER = {Neil\ P.\ Dummigan},
       DOI = {10.4310/MRL.2019.v26.n4.a8},
       URL = {https://doi.org/10.4310/MRL.2019.v26.n4.a8},
}

@article {KL89,
    AUTHOR = {Kolyvagin, V. A. and Logach\"ev, D. Yu.},
     TITLE = {Finiteness of the {S}hafarevich-{T}ate group and the group of
              rational points for some modular abelian varieties},
   JOURNAL = {Algebra i Analiz},
  FJOURNAL = {Algebra i Analiz},
    VOLUME = {1},
      YEAR = {1989},
    NUMBER = {5},
     PAGES = {171--196},
      ISSN = {0234-0852},
   MRCLASS = {11G10 (11G40 14K15)},
  MRNUMBER = {1036843},
MRREVIEWER = {Takeshi\ Ooe},
}

@article {KMY11,
    AUTHOR = {Kim, Byoung Du and Masri, Riad and Yang, Tong Hai},
     TITLE = {Nonvanishing of {H}ecke {$L$}-functions and the {B}loch-{K}ato
              conjecture},
   JOURNAL = {Math. Ann.},
  FJOURNAL = {Mathematische Annalen},
    VOLUME = {349},
      YEAR = {2011},
    NUMBER = {2},
     PAGES = {301--343},
      ISSN = {0025-5831,1432-1807},
   MRCLASS = {11R42 (11R47)},
  MRNUMBER = {2753824},
MRREVIEWER = {Matthew\ P.\ Young},
       DOI = {10.1007/s00208-010-0521-7},
       URL = {https://doi.org/10.1007/s00208-010-0521-7},
}

@article {LX04,
    AUTHOR = {Liu, Chunlei and Xu, Lanju},
     TITLE = {The vanishing order of certain {H}ecke {$L$}-functions of
              imaginary quadratic fields},
   JOURNAL = {J. Number Theory},
  FJOURNAL = {Journal of Number Theory},
    VOLUME = {108},
      YEAR = {2004},
    NUMBER = {1},
     PAGES = {76--89},
      ISSN = {0022-314X,1096-1658},
   MRCLASS = {11R47 (11F67 11R42)},
  MRNUMBER = {2078658},
MRREVIEWER = {Stephen\ D.\ Miller},
       DOI = {10.1016/j.jnt.2004.04.007},
       URL = {https://doi.org/10.1016/j.jnt.2004.04.007},
}

@article {Mas07,
    AUTHOR = {Masri, Riad},
     TITLE = {Asymptotics for sums of central values of canonical {H}ecke
              {$L$}-series},
   JOURNAL = {Int. Math. Res. Not. IMRN},
  FJOURNAL = {International Mathematics Research Notices. IMRN},
      YEAR = {2007},
    NUMBER = {19},
     PAGES = {Art. ID rnm065, 27},
      ISSN = {1073-7928,1687-0247},
   MRCLASS = {11F67 (11F37 11G15 11M06)},
  MRNUMBER = {2359540},
MRREVIEWER = {J.\ C.\ Lagarias},
       DOI = {10.1093/imrn/rnm065},
       URL = {https://doi.org/10.1093/imrn/rnm065},
}

@article {Mas07b,
    AUTHOR = {Masri, Riad},
     TITLE = {Quantitative nonvanishing of {$L$}-series associated to
              canonical {H}ecke characters},
   JOURNAL = {Int. Math. Res. Not. IMRN},
  FJOURNAL = {International Mathematics Research Notices. IMRN},
      YEAR = {2007},
    NUMBER = {19},
     PAGES = {Art. ID rnm070, 16},
      ISSN = {1073-7928,1687-0247},
   MRCLASS = {11R47 (11M41 11R42)},
  MRNUMBER = {2359543},
MRREVIEWER = {Robert\ Perlis},
       DOI = {10.1093/imrn/rnm070},
       URL = {https://doi.org/10.1093/imrn/rnm070},
}

@article {MR82,
    AUTHOR = {Montgomery, Hugh L. and Rohrlich, David E.},
     TITLE = {On the {$L$}-functions of canonical {H}ecke characters of
              imaginary quadratic fields. {II}},
   JOURNAL = {Duke Math. J.},
  FJOURNAL = {Duke Mathematical Journal},
    VOLUME = {49},
      YEAR = {1982},
    NUMBER = {4},
     PAGES = {937--942},
      ISSN = {0012-7094,1547-7398},
   MRCLASS = {12A70 (10D24 14K07)},
  MRNUMBER = {683009},
MRREVIEWER = {Reinhard\ B\"olling},
       URL = {http://projecteuclid.org/euclid.dmj/1077315537},
}

@article {MV10,
	AUTHOR = {Michel, Philippe and Venkatesh, Akshay},
	TITLE = {The subconvexity problem for {${\rm GL}_2$}},
	JOURNAL = {Publ. Math. Inst. Hautes \'{E}tudes Sci.},
	FJOURNAL = {Publications Math\'{e}matiques. Institut de Hautes \'{E}tudes
		Scientifiques},
	NUMBER = {111},
	YEAR = {2010},
	PAGES = {171--271},
	ISSN = {0073-8301},
	MRCLASS = {11F67 (11F37 11F70 22E55 58J51)},
	MRNUMBER = {2653249},
	MRREVIEWER = {Andre Reznikov},
	DOI = {10.1007/s10240-010-0025-8},
	URL = {https://doi.org/10.1007/s10240-010-0025-8},
}

@article {MY00,
    AUTHOR = {Miller, Stephen D. and Yang, Tonghai},
     TITLE = {Non-vanishing of the central derivative of canonical {H}ecke
              {$L$}-functions},
   JOURNAL = {Math. Res. Lett.},
  FJOURNAL = {Mathematical Research Letters},
    VOLUME = {7},
      YEAR = {2000},
    NUMBER = {2-3},
     PAGES = {263--277},
      ISSN = {1073-2780},
   MRCLASS = {11F67 (11G05)},
  MRNUMBER = {1764321},
MRREVIEWER = {Andrea\ Mori},
       DOI = {10.4310/MRL.2000.v7.n3.a2},
       URL = {https://doi.org/10.4310/MRL.2000.v7.n3.a2},
}

@article {MY11,
    AUTHOR = {Masri, Riad and Yang, Tonghai},
     TITLE = {Nonvanishing of {H}ecke {$L$}-functions for {CM} fields and
              ranks of abelian varieties},
   JOURNAL = {Geom. Funct. Anal.},
  FJOURNAL = {Geometric and Functional Analysis},
    VOLUME = {21},
      YEAR = {2011},
    NUMBER = {3},
     PAGES = {648--679},
      ISSN = {1016-443X,1420-8970},
   MRCLASS = {11F67 (11F27 11F41 11G10 11G40 11R80)},
  MRNUMBER = {2810860},
MRREVIEWER = {Andrea\ Mori},
       DOI = {10.1007/s00039-011-0121-z},
       URL = {https://doi.org/10.1007/s00039-011-0121-z},
}

@incollection {Nek07,
    AUTHOR = {Nekov\'a\v r, Jan},
     TITLE = {The {E}uler system method for {CM} points on {S}himura curves},
 BOOKTITLE = {{$L$}-functions and {G}alois representations},
    SERIES = {London Math. Soc. Lecture Note Ser.},
    VOLUME = {320},
     PAGES = {471--547},
 PUBLISHER = {Cambridge Univ. Press, Cambridge},
      YEAR = {2007},
      ISBN = {978-0-521-69415-5},
   MRCLASS = {11G15 (11F80 11G18)},
  MRNUMBER = {2392363},
MRREVIEWER = {Benjamin\ V.\ Howard},
       DOI = {10.1017/CBO9780511721267.014},
       URL = {https://doi.org/10.1017/CBO9780511721267.014},
}

@article {Roh80a,
    AUTHOR = {Rohrlich, David E.},
     TITLE = {The nonvanishing of certain {H}ecke {$L$}-functions at the
              center of the critical strip},
   JOURNAL = {Duke Math. J.},
  FJOURNAL = {Duke Mathematical Journal},
    VOLUME = {47},
      YEAR = {1980},
    NUMBER = {1},
     PAGES = {223--232},
      ISSN = {0012-7094,1547-7398},
   MRCLASS = {12A70 (10D24 14K07)},
  MRNUMBER = {563377},
MRREVIEWER = {Reinhard\ B\"olling},
       URL = {http://projecteuclid.org/euclid.dmj/1077313872},
}

@article {Roh80b,
    AUTHOR = {Rohrlich, David E.},
     TITLE = {On the {$L$}-functions of canonical {H}ecke characters of
              imaginary quadratic fields},
   JOURNAL = {Duke Math. J.},
  FJOURNAL = {Duke Mathematical Journal},
    VOLUME = {47},
      YEAR = {1980},
    NUMBER = {3},
     PAGES = {547--557},
      ISSN = {0012-7094,1547-7398},
   MRCLASS = {12A70 (10H10)},
  MRNUMBER = {587165},
MRREVIEWER = {Reinhard\ B\"olling},
       URL = {http://projecteuclid.org/euclid.dmj/1077314180},
}

@article {Roh80,
    AUTHOR = {Rohrlich, David E.},
     TITLE = {Galois conjugacy of unramified twists of {H}ecke characters},
   JOURNAL = {Duke Math. J.},
  FJOURNAL = {Duke Mathematical Journal},
    VOLUME = {47},
      YEAR = {1980},
    NUMBER = {3},
     PAGES = {695--703},
      ISSN = {0012-7094,1547-7398},
   MRCLASS = {12A70 (12A65)},
  MRNUMBER = {587174},
MRREVIEWER = {Alan\ Candiotti},
       URL = {http://projecteuclid.org/euclid.dmj/1077314189},
}

@article {Roh82,
    AUTHOR = {Rohrlich, David E.},
     TITLE = {Root numbers of {H}ecke {$L$}-functions of {CM} fields},
   JOURNAL = {Amer. J. Math.},
  FJOURNAL = {American Journal of Mathematics},
    VOLUME = {104},
      YEAR = {1982},
    NUMBER = {3},
     PAGES = {517--543},
      ISSN = {0002-9327,1080-6377},
   MRCLASS = {12A70 (10H10)},
  MRNUMBER = {658544},
MRREVIEWER = {Kenneth\ Kramer},
       DOI = {10.2307/2374152},
       URL = {https://doi.org/10.2307/2374152},
}

@article {Rub81,
    AUTHOR = {Rubin, Karl},
     TITLE = {Elliptic curves with complex multiplication and the conjecture
              of {B}irch and {S}winnerton-{D}yer},
   JOURNAL = {Invent. Math.},
  FJOURNAL = {Inventiones Mathematicae},
    VOLUME = {64},
      YEAR = {1981},
    NUMBER = {3},
     PAGES = {455--470},
      ISSN = {0020-9910,1432-1297},
   MRCLASS = {10D25 (12B30)},
  MRNUMBER = {632985},
MRREVIEWER = {Sheldon\ Kamienny},
       DOI = {10.1007/BF01389277},
       URL = {https://doi.org/10.1007/BF01389277},
}

@article {RY99,
    AUTHOR = {Rodriguez Villegas, Fernando and Yang, Tonghai},
     TITLE = {Central values of {H}ecke {$L$}-functions of {CM} number
              fields},
   JOURNAL = {Duke Math. J.},
  FJOURNAL = {Duke Mathematical Journal},
    VOLUME = {98},
      YEAR = {1999},
    NUMBER = {3},
     PAGES = {541--564},
      ISSN = {0012-7094,1547-7398},
   MRCLASS = {11F67 (11F37 11F41 11G40)},
  MRNUMBER = {1695801},
MRREVIEWER = {Andrea\ Mori},
       DOI = {10.1215/S0012-7094-99-09817-4},
       URL = {https://doi.org/10.1215/S0012-7094-99-09817-4},
}

@article{Shi76,
  title={The special values of the zeta functions associated with cusp forms},
  author={Shimura, Goro},
  journal={Communications on pure and applied Mathematics},
  volume={29},
  number={6},
  pages={783--804},
  year={1976},
  publisher={Wiley Online Library}
}

@article {Sta74,
    AUTHOR = {Stark, H. M.},
     TITLE = {Some effective cases of the {B}rauer-{S}iegel theorem},
   JOURNAL = {Invent. Math.},
  FJOURNAL = {Inventiones Mathematicae},
    VOLUME = {23},
      YEAR = {1974},
     PAGES = {135--152},
      ISSN = {0020-9910,1432-1297},
   MRCLASS = {10H10 (12A50 12A70)},
  MRNUMBER = {342472},
MRREVIEWER = {W.\ Narkiewicz},
       DOI = {10.1007/BF01405166},
       URL = {https://doi.org/10.1007/BF01405166},
}

@article {Tem10,
    AUTHOR = {Templier, Nicolas},
     TITLE = {On asymptotic values of canonical quadratic {$L$}-functions},
   JOURNAL = {Int. J. Number Theory},
  FJOURNAL = {International Journal of Number Theory},
    VOLUME = {6},
      YEAR = {2010},
    NUMBER = {8},
     PAGES = {1717--1730},
      ISSN = {1793-0421,1793-7310},
   MRCLASS = {11M41 (11F67 11L40 11R42)},
  MRNUMBER = {2755467},
MRREVIEWER = {Robert\ C.\ Rhoades},
       DOI = {10.1142/S1793042110003678},
       URL = {https://doi.org/10.1142/S1793042110003678},
}

@unpublished{TZ08,
  author = {Tian, Ye and Zhang, Shou-Wu},
  title = {Kolyvagin systems of {CM} points on {Shimura} curves},
  note = {Preprint},
  year = {2008}
}

@article {Ven10,
    AUTHOR = {Venkatesh, Akshay},
     TITLE = {Sparse equidistribution problems, period bounds and
              subconvexity},
   JOURNAL = {Ann. of Math. (2)},
  FJOURNAL = {Annals of Mathematics. Second Series},
    VOLUME = {172},
      YEAR = {2010},
    NUMBER = {2},
     PAGES = {989--1094},
      ISSN = {0003-486X,1939-8980},
   MRCLASS = {11F67 (11F70 11M41 58J51)},
  MRNUMBER = {2680486},
MRREVIEWER = {Philippe\ G.\ Michel},
       DOI = {10.4007/annals.2010.172.989},
       URL = {https://doi.org/10.4007/annals.2010.172.989},
}

@article {Yan20,
    AUTHOR = {Yang, Liyang},
     TITLE = {An explicit {CM} type norm formula and effective nonvanishing
              of class group {$L$}-functions for {CM} fields},
   JOURNAL = {Pacific J. Math.},
  FJOURNAL = {Pacific Journal of Mathematics},
    VOLUME = {304},
      YEAR = {2020},
    NUMBER = {1},
     PAGES = {347--384},
      ISSN = {0030-8730,1945-5844},
   MRCLASS = {11R42 (11G15 11G35 11M20 11R29)},
  MRNUMBER = {4053203},
MRREVIEWER = {St\'ephane\ R.\ Louboutin},
       DOI = {10.2140/pjm.2020.304.347},
       URL = {https://doi.org/10.2140/pjm.2020.304.347},
}

@article{Yan25,
	title={Spectral Reciprocity: A {F}ourier--Analytic Approach},
	author={Yang, Liyang},
	journal={arXiv preprint arXiv:2512.03305},
	year={2025}
}

@article{Yan26,
  title={Relative Trace Formula and Twisted {$ L $}-functions: the {Burgess} Bound},
  author={Yang, Liyang},
  journal={arXiv preprint arXiv:2305.10719},
  note={to appear in Compositio Mathematic},
  year   = {2026}
}

@article{Yan26b,
  title={Symmetric Spectral Reciprocity for {$\mathrm{GL}_2$} and Uniform Subconvexity},
  author={Yang, Liyang},
  journal={arXiv preprint arXiv:2607.04476},
  year={2026}
}

@book {YZZ13,
    AUTHOR = {Yuan, Xinyi and Zhang, Shou-Wu and Zhang, Wei},
     TITLE = {The {G}ross-{Z}agier formula on {S}himura curves},
    SERIES = {Annals of Mathematics Studies},
    VOLUME = {184},
 PUBLISHER = {Princeton University Press, Princeton, NJ},
      YEAR = {2013},
     PAGES = {x+256},
      ISBN = {978-0-691-15592-0},
   MRCLASS = {11G18 (11F70 14G35)},
  MRNUMBER = {3237437},
MRREVIEWER = {Ernest\ Hunter\ Brooks},
}

\end{document}